%

\documentclass[aop,MSNbibl,nameyear,dvips]{arximspdf}
\usepackage{graphicx}
%

\doi{10.1214/11-AOP724}
\volume{41}
\issue{3B}
\pubyear{2013}
\firstpage{1900}
\lastpage{1937}

\makeatletter
\newtheorem{theorem}{Theorem}
\newtheorem{lemma}[theorem]{Lemma}
\newtheorem{proposition}[theorem]{Proposition}
\newtheorem{corollary}[theorem]{Corollary}

\newproclaim{remark}[theorem]{Remark}
%

%
\newcommand{\PP}{\mathbf P}
\newcommand{\eps}{\varepsilon}
\newcommand{\Z}{{\mathbb Z}}
\newcommand{\R}{{\mathbb R}}
\newcommand{\ev}{\mathbf{E}}
\newcommand{\pr}{\mathbf P}
\newcommand{\one}{{\mathbf1}}
\newcommand{\supp}{\operatorname{supp}}
\newcommand{\phx}{\Phi_{\mathbf{h}, \mathbf{x}}}
\newcommand{\mhx}{\mu_{\mathbf{h}, \mathbf{x}}}
\newcommand{\Rhxde}{\mathcal R(\BS{h}, \BS{x}, \gd, \eps)}
\newcommand{\gd}{\delta}
\newcommand{\gl}{\lambda}
\newcommand{\di}{\,d}
\newcommand{\BS}[1]{\mathbf{#1}}
\newcommand{\C}[1]{\mathcal{#1}}
\newcommand{\D}[1]{\mathbb{#1}}
\newcommand{\ol}[1]{\overline{#1}}
\newcommand{\ul}[1]{\underline{#1}}
\newcommand{\Apl}{A^{+}}

\newcommand{\BMrestricted}{a}
\newcommand{\restrictionNoFloor}{b}
\newcommand{\restrictionFloor}{c}
\newcommand{\lastCost}{d}
\newcommand{\mesh}{\operatorname{mesh}}
\newcommand{\eqref}[1]{(\ref{#1})}
\newcommand{\varlimsup}{\overline{\lim}}
\newcommand{\fracf}[2]{({#1})/({#2})}
\makeatother

\begin{document}
\begin{frontmatter}

\title{Patterns in Sinai's walk}
\runtitle{Patterns in Sinai's walk}

\begin{aug}
\author[A]{\fnms{Dimitris} \snm{Cheliotis}\corref{}\thanksref{au1}\ead[label=e1]{dcheliotis@math.uoa.gr}}
\and
\author[B]{\fnms{B\'alint} \snm{Vir\'ag}\thanksref{au2}\ead[label=e2]{balint@math.toronto.edu}\ead[label=u2,url]{www.math.toronto.edu/\textasciitilde balint}}
\runauthor{D. Cheliotis and B. Vir\'ag}
\affiliation{University of Athens and University of Toronto}
\address[A]{Department of Mathematics\\
University of Athens\\
Panepistimiopolis\\
15784 Athens \\
Greece\\
\printead{e1}} 
\address[B]{Departments
of Mathematics and Statistics\\
University of Toronto\\
Toronto, Ontario\\ Canada, M5S 3G3\\
\printead{e2}\\
\printead{u2}}
\end{aug}
\thankstext{au1}{Supported in part by the DFG-NWO Bilateral Research
Group ``Mathematical Models from Physics and Biology.''}
\thankstext{au2}{Supported by an NSERC Discovery Accelerator Grant and
the Canada Research Chair Program.}

\received{\smonth{2} \syear{2011}}
\revised{\smonth{9} \syear{2011}}

%
\begin{abstract}
Sinai's random walk in random environment shows interesting
patterns on the exponential time scale. We characterize the
patterns that appear on infinitely many time scales after
appropriate rescaling (a functional law of iterated
logarithm). The curious rate function captures the
difference between one-sided and two-sided behavior.
\end{abstract}

%
\begin{keyword}[class=AMS]
\kwd{60K37}
\kwd{60F17}
\kwd{60F10}.
\end{keyword}
\begin{keyword}
\kwd{Sinai's walk}
\kwd{diffusion in random environment}
\kwd{functional law of the iterated logarithm}
\kwd{large deviations}
\kwd{Brownian motion}.
\end{keyword}

\end{frontmatter}

\section{Introduction}\label{sec1}

For every integer $k$, pick $p_k$ independently from a fixed
probability measure on $[0,1]$. Then, keeping the $p_k $'s fixed,
consider a nearest-neighbor random walk $S(n)$ on $\D{Z}$, with
$S(0)=0$ and with
probabilities $p_k, 1-p_k$ of going right and left from $k$,
respectively. This model, introduced by \citet{CR}, is the most
well-studied model of motion in random medium.

We will assume that the random variables $p_1, (1-p_1)$ have
some finite negative moment. The walk $S(n)$, pictured in
Figure~\ref{SinaiWalk}, is recurrent exactly when $\log
\frac{1-p_1}{p_1}$ has mean zero; see \citet{SO}. The
graph of the walk seems much more confined than the
ordinary random walk. Indeed, when $\log\frac{1-p_1}{p_1}$
has finite and positive variance as well, the typical value
of $|S(n)|$ is of the order of $\log^2 n$, much less than
the usual $\sqrt{n}$ for simple random walk; see
\citet{SI}. The walk in this regime is called \textit{Sinai's walk}.

The logarithmic behavior of $|S(n)|$ suggests that we may get a
more enlightening picture by considering $S(n)$ on an exponential
time scale, namely the process $t\mapsto S(e^t)$ with the argument
rounded down to the next integer. Figure~\ref{SinaiWalkLog} shows
that the walk tends to get trapped by the environment. Indeed, the
stationary measure for $S(n)$ is given by the exponential of a
function with increments $\log\frac{1-p_k}{p_k}$,\vadjust{\goodbreak} that is, a
random walk on $\mathbb Z$. So at distance $n$ there are regions
with stationary measure as large as $e^{\sqrt n}$, in which $S(n)$
gets trapped for a long time.

\begin{figure}

\includegraphics{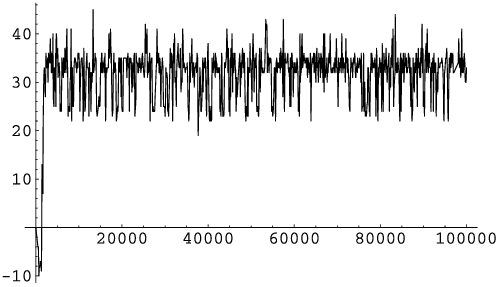}

\caption{Path of Sinai's walk $S(n)$.}\label{SinaiWalk}
\end{figure}

\begin{figure}[b]

\includegraphics{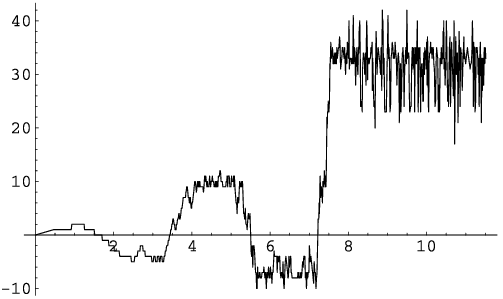}

\caption{The same path in exponential time,
$S(e^t)$.}\label{SinaiWalkLog}
\end{figure}

The pattern we see in Figure~\ref{SinaiWalkLog} suggests a natural
question: what patterns can we get that way? The main goal of this
paper is to answer a mathematically precise version of this
question. We consider rescaled versions of the path of $S(e^t)$
given~by
\[
\frac{S( e^{at})}{a^2\log\log a} , \qquad t\ge0,
\]
and ask what are the possible limit points of the graph of
this process as $a\to\infty$. For this, a topology on
graphs has to be specified. As we will see, the spatial
scaling factor $a^2\log\log a$ is needed to ensure that the
answer to our question is nontrivial.

Figure~\ref{SinaiWalkLog} suggests that we should consider a
topology much weaker than the usual uniform-on-compacts
convergence of functions: the process shows too many oscillations
on this scale, and we do not even expect a function in the limit.
Instead, we consider the graph occupation measure, and we view it as an
element of the space of measures on $\D{R}^+ \times
\D{R}$.
For a measurable $\varphi\dvtx  \D{R}^+ \to
\D{R}$, its \textit{graph occupation measure} is given by
\[
m(\varphi)(A):=\operatorname{Leb}\{s\ge0 \dvtx  (s, \varphi(s))\in A\}
\]
for $A\subset[0, \infty)\times\D{R}$ Borel set.
Note that $m(\varphi)( \cdot \times\D{R})$ is the Lebesgue
measure.

The space of Borel measures on $\D{R}^+ \times\D{R}$ is equipped
with the
topology of local weak convergence.
Then consider the following subset of that space:
\[
\mathcal M= \left\{\mu\dvtx \quad
\begin{array}{l}
\mu( \cdot \times\D{R}) \mbox{ is the Lebesgue measure, } \\
\exists f,g \ge0 \mbox{ nondecreasing s.t. }
\supp(\mu) \subset\overline{\operatorname{graph}(f)}\cup
\overline{\operatorname{graph}(-g)}
\end{array}
 \right\}.
\]
For $\mu\in\mathcal M$, let $f_\mu$ and $g_\mu$ denote the
unique minimal left-continuous choice of $f,g$ in the above
definition.
Now $\mu$ projects to Lebesgue measure on
$\D{R}^+$, so $\mu$ restricted to the upper and lower half
planes project to a partition of Lebesgue measure. Let
$s_{\mu+}, s_{\mu-}\in[0,\infty]$ denote the supremum of
the support of these projections, respectively. Let
%
\begin{equation} \label{rate}
 \quad I(\mu):= \frac{\pi^2}{2}\int_0^{s_{\mu+}}\frac{1}{t^2}\di(
f_\mu+ g_\mu)(t)+ \frac{\pi^2}{8}\int_{s_{\mu+}}^\infty
\frac{1}{t^2}\di g_\mu(t) \qquad  \mbox{if } s_{\mu-}=\infty,
\end{equation}
and if $s_{\mu-}<\infty$, in \eqref{rate} we exchange $f_\mu,
g_\mu$ and replace $s_{\mu+}$ with $s_{\mu-}$. Note the striking difference
between the parts with $\pi^2/8$ and $\pi^2/2$ coefficients---we will see that it is harder to be supported on the
graph of two functions than on a single one.

\begin{theorem}\label{FunLIL}
With probability 1, the $a\to\infty$ limit points of the graph
occupation measures of the rescaled walk
\[
\frac{S(e^{at})}{a^2\log\log a} , \qquad t\ge0,
\]
constitute the set
\[
\C{K}:=\{ \mu\in\C{M}\dvtx  I(\mu)\le1\}.
\]
Also, there is at least one limit point along every sequence
$a_n\to\infty$.
\end{theorem}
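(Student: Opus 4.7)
I would follow the standard three-step pattern for functional laws of the iterated logarithm: (i) reduce the statement to a question about Brownian motion via Donsker's invariance and Sinai's localization; (ii) prove a large deviation principle at scale $\log\log a$ for the rescaled graph occupation measure, with rate function $I$; (iii) combine a Borel--Cantelli upper bound along a geometric subsequence $a_n=\rho^n$ with a matching lower bound based on asymptotic independence at well-separated scales. The existence of at least one limit point along every sequence $a_n\to\infty$ is then tightness, which follows from the known a.s.\ upper bound $|S(n)| \le C(\log n)^2\log\log\log n$, confining the rescaled graph to the compact set $\{|y|\le Ct^2\}$ in the local weak topology.

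\textbf{Reduction to Brownian potential and the LDP.} Let $V(k) = \sum_{j=1}^k \log\frac{1-p_j}{p_j}$, and use Donsker's theorem to rescale $V(\lfloor a^2 x\rfloor)/a$ to a Brownian motion $W$. Sinai's localization (and its quantitative refinements) identifies $S(n)$, up to lower order, with the bottom of the deepest valley of $V$ of depth $\log n$ containing the origin; in the Brownian scaling this becomes a deterministic functional $b_W(t)$ of $W$. By Brownian scaling, $b_W(at)/a^2 \ed b_{W_a}(t)$ where $W_a(x) = W(a^2x)/a$. Writing $W_a = \eps^{-1}(\eps W_a)$ with $\eps = (\log\log a)^{-1/2}$, Schilder's LDP for $\eps W_a$ at rate $\eps^{-2} = \log\log a$, combined with the continuous mapping theorem for the valley-finding functional, yields an LDP for the rescaled occupation measure at the correct speed.

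\textbf{Identifying the rate function.} A profile $\mu\in\C{M}$ corresponds to potentials $\varphi$ whose right-side valleys up to depth $t$ reach $f_\mu(t)$ and whose left-side valleys reach $-g_\mu(t)$; the candidate rate is the infimum of $\hf\int(\varphi')^2$ over such $\varphi$. On the interval $[0,s_{\mu+}]$ the extremizer must maintain barriers on both sides of $0$, and the calculation reduces to a Dirichlet eigenvalue problem on a symmetric interval that yields the coefficient $\pi^2/2$; once one side's valleys have ceased (at $s_{\mu+}$), only a one-sided barrier is needed on the active side. A one-sided constraint is cheaper by a factor of four -- the same factor that relates $\PP(\sup_{[0,1]}|W|\le\eps)$ to its one-sided analogue -- producing the coefficient $\pi^2/8$. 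This is precisely where the distinctive two-regime form of $I$ comes from.

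\textbf{Borel--Cantelli and the main obstacle.} Given the LDP, the upper bound is standard: for $\rho$ large, $\sum_n \PP(\mu_{a_n} \notin \C{K}^\eps) < \infty$, and interpolation between $a_n$ and $a_{n+1}$ works because nearby exponential times produce essentially identical graphs at the relevant spatial scale. The lower bound uses that $b_W(at_n)$ depends essentially only on $W$ restricted to $[-Ca_n^2, Ca_n^2]$, so for $\rho$ large the spatial windows at successive scales are nearly disjoint, giving asymptotic independence; the LDP lower bound and the second Borel--Cantelli lemma then produce every $\mu\in\C{K}$ as a limit point. I expect the principal difficulty to lie in step (iii): rigorously identifying $I$ with the variational functional, in particular establishing lower-semicontinuity on $\C{M}$ with the local weak topology, compactness of sublevel sets, and justifying the $\pi^2/2$-to-$\pi^2/8$ transition through a careful analysis of extremizing potentials and their valley structures.
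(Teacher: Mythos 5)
Your outline has the right architecture (LDP for the environment at speed $\log\log a$, Borel--Cantelli along geometric scales, transfer to the walk by localization, compactness from the a.s.\ bound on $\max|S_k|$), but two of its load-bearing steps do not work as stated. First, the LDP cannot be obtained from Schilder's theorem plus a contraction. After the rescaling you describe, the small parameter disappears from the path: positions of interest are of order $\eps^{-2}=\log\log a$, and $\eps W_a(\eps^{-2}\,\cdot\,)$ is again a standard Brownian motion, so there is no genuine small-noise problem on a fixed horizon. The atypical events here are not ``path close to a macroscopic profile'' events but confinement events: the environment must avoid creating a well of depth $\sim h$ over a spatial stretch of length $\sim M$, and the exponential cost of that is a principal-eigenvalue (small-ball) rate $\tfrac{\pi^2}{2}M/h^2$ or $\tfrac{\pi^2}{8}M/h^2$ -- exactly the $t^{-2}$ weights in $I$. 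A contraction of the Dirichlet-energy rate would assign such events cost zero (a nearly flat or slightly monotone potential has arbitrarily small energy and no deep wells), and in any case the well-location functional is not continuous, so the contraction principle is not even formally applicable. This is why the paper computes the rates directly from confinement estimates (the Port--Stone expansion, Lemma \ref{confinementCost}); the genuinely delicate point, which your eigenvalue heuristic for $\pi^2/2$ vs.\ $\pi^2/8$ does not reach, is that during the two-sided phase the constraint is on the reflected process \emph{together with a floor on the running minimum}, and showing that this floor pushes the one-sided constant $\pi^2/8$ back up to $\pi^2/2$ (Lemma \ref{confinementCost}(c)) requires a separate argument.

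Second, the lower bound cannot rely on ``nearly disjoint spatial windows'': the windows $[-Ca_n^2,Ca_n^2]$ at successive geometric scales are nested, all containing the origin, so the events at different scales constrain the same piece of the environment and are not asymptotically independent in the naive sense. What saves the argument in the paper is a quantitative decoupling estimate, $\PP(A_k\cap A_l)\le C_0\PP(A_k)\PP(A_l)$ for $l-k$ larger than an explicit $k$-dependent gap (Lemma \ref{secondMoment}), obtained by showing that conditionally on the small-scale event the path sits in a corridor on which the large-scale restriction has comparable probability; this feeds into the Kochen--Stone lemma, which only yields $\PP(A_n \text{ i.o.})\ge 1/C_0$, and the a.s.\ statement then comes from a zero--one law. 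Without some such correlation bound your second Borel--Cantelli step has no justification. Finally, a smaller but real point: Donsker's theorem (weak convergence) cannot support an almost-sure limit-point statement about a single walk; the paper needs the KMT strong approximation for the potential (this is where the exponential moment hypothesis enters), the quantitative localization of Theorem \ref{Hu Theorem}, and the embedding of the walk into the diffusion with control of the random time change, none of which is replaceable by an invariance principle alone.
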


Our result is the analogue of Strassen's functional law of
iterated logarithm for ordinary random walks [\citet{ST}]. This is
often stated in terms of the rescaled process restricted to a
\emph{finite} interval; such results easily follow from the full version.
We also prove a version for the \textit{Brox diffusion}, the
continuous version of Sinai's walk; see Theorem
\ref{LILforDiffusion} in Section~\ref{MotLIL}. As discussed there,
Theorem~\ref{FunLIL} extends to general environments that are
close to Brownian motion. Our results are in agreement with
Theorems 1.3, 8.1 of \citet{HS} about the one-point law,
\[\limsup_{a\to\infty}\frac{S(e^{a})}{a^2 \log\log
a}=\frac{8}{\pi^2}.\vadjust{\goodbreak}
\]

There are many ways in which Sinai's walk and the Brox diffusion
are determined by their environment; see, for example, the results
of \citet{HU}, quoted as Theorem~\ref{Hu Theorem} in the present
paper. In particular, the location of $S(n)$ is well predicted by
$x(\log n)$, where $x$ is the process of wells for the
environment. For the Brox diffusion, on the process level, the
first author, \citet{CH}, showed that after some large random time,
the path of the process $x(\log t)$ is close to that of the
most-favorite-point process of the diffusion at time $t$.

In Section~\ref{stepSection}, we give a precise description of the
process of wells $x_B$ for the environment defined by two-sided
Brownian motion $B$. Informally, consider the graph of $B$ as a
vessel in which water is poured gradually from the positive $y$
axis. The water forms several increasing and merging puddles, also
called wells. Then $x_B(h)$ is the $x$-coordinate of the bottom of
the first-created well with depth at least~$h$.

Our law of iterated logarithm is based on a similar theorem for
the process of wells. This, in turn, is based on a large
deviation principle for this process.\vspace*{-2pt}

\begin{theorem}\label{bLDP}
The family of the laws of $\{m (x_B/M )\dvtx  M>0\}$,
as \mbox{$M\to\infty$}, satisfies a large deviation principle on
$\C{M}$ with speed $M$ and good rate function $I$.\vspace*{-2pt}
\end{theorem}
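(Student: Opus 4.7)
The plan is to establish Theorem~\ref{bLDP} by combining exponential tightness with matching upper and lower bounds, reducing the infinite-dimensional statement to finite-dimensional tail asymptotics for $x_B$ via the Brownian scaling identity
\[
(x_B(\lambda h))_{h\geq 0} \ed (\lambda^2 x_B(h))_{h\geq 0},\qquad \lambda>0,
\]
inherited from $B(\lambda^2\cdot)/\lambda \ed B$. Under this scaling the rescaled graph $h\mapsto x_B(h)/M$ concentrates near the time axis on $|y|\asymp h^2/M$, so the ``typical'' limit of $m(x_B/M)$ is Lebesgue measure on $\R^+\times\{0\}$; the large deviation events to be quantified are those where $x_B(h)/M$ is macroscopically nonzero on a non-negligible range of $h$.

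The computational core is the pair of tail asymptotics
\[
-\lim_{M\to\infty} M^{-1}\log \pr\bigl(x_B(h)\geq aM\bigr) = \frac{\pi^2}{8}\,\frac{a}{h^2},
\]
which is consistent with (and process-lifts) the Hu--Shi one-point law recalled after Theorem~\ref{FunLIL}, and
\[
-\lim_{M\to\infty} M^{-1}\log\pr\bigl(x_B \text{ visits both } [bM,\infty) \text{ and } (-\infty, -cM] \text{ on } [0,h]\bigr) = \frac{\pi^2}{2}\,\frac{b+c}{h^2}.
\]
The first quantifies the exponential scarcity of Brownian valleys of depth $\geq h$ on one side of the origin, while the second additionally forces a compatible valley on the opposite side with the correct merging order, multiplying the cost and yielding the factor $4=(\pi^2/2)/(\pi^2/8)$ that drives the split form of $I$.

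With these in hand, exponential tightness follows from the one-sided bound by a union bound over a dyadic decomposition of $h$, which simultaneously gives compactness of the sublevel sets of $I$, hence goodness. The lower bound is obtained by approximating any $\mu\in\C{M}$ with $I(\mu)<\infty$ by a piecewise constant trajectory $\varphi$ of $x_B/M$ realising the prescribed support structure: the strong Markov property of $x_B$ at its jump times factorises the probability of tracking $\varphi$ into independent one-step events, each controlled by the appropriate asymptotic above, and summing their rates recovers $I(\mu)$ as the partition is refined. The upper bound proceeds by a dual discretization of $[0,T]\times\R$, bounding the probability of each cell by the relevant one- or two-sided rate and passing to the Stieltjes integrals in $I$ as the mesh tends to zero.

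The principal obstacle is reconciling the two regimes across the transition height $s_{\mu+}$ (or $s_{\mu-}$). In the upper bound, once the trajectory has committed to a single side after $s_{\mu+}$, one must argue that any subsequent excursion to the opposite side contributes strictly more than $I(\mu)$ and so does not determine the exponential rate; this requires conditioning on the well configuration at height $s_{\mu+}$ and invoking the Markov structure of $x_B$ to decouple the pre- and post-$s_{\mu+}$ contributions. Matching this decoupling in the lower bound near the transition, while keeping the approximating step functions compatible with the constraint that the projections of $\mu$ onto $\R^+$ partition Lebesgue measure, is the most delicate point in the argument.
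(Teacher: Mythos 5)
Your overall strategy---exponential tightness plus matching upper and lower bounds, reduced to finite-dimensional asymptotics via the scaling $x_B(\lambda h)\ed\lambda^2 x_B(h)$---is the same as the paper's, and your one-sided tail rate $\tfrac{\pi^2}{8}\tfrac{a}{h^2}$ is correct. But there are two genuine gaps, one computational and one structural, and each would derail the argument as written.

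The computational gap is your two-sided tail asymptotic. You claim the rate for $\{x_B \text{ visits both } [bM,\infty) \text{ and } (-\infty,-cM] \text{ on } [0,h]\}$ is $\tfrac{\pi^2}{2}\tfrac{b+c}{h^2}$, but it is not: since $x_B$ can visit only one side at a time, whichever side it visits \emph{last} costs only $\tfrac{\pi^2}{8}$ per unit length, and only the side that $x_B$ subsequently \emph{abandons} incurs the $\tfrac{\pi^2}{2}$ cost. The correct rate is therefore $\tfrac{\pi^2}{2}\tfrac{\min(b,c)}{h^2}+\tfrac{\pi^2}{8}\tfrac{\max(b,c)}{h^2}$, strictly smaller than what you wrote. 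More to the point, the mechanism is different from what you describe: the $\tfrac{\pi^2}{2}$ coefficient does not arise because ``both sides are visited.'' It arises because, if $x_B$ reaches $x_i$ and later jumps across to the opposite side, then the environment $B$ on $[0,x_i]$ must satisfy a genuinely two-sided confinement: both $\overline R(Mx_i)<h_i+\eps$ and $\underline B(Mx_i)\ge -h_N-\eps$. That pair has rate $\tfrac{\pi^2}{2}$ (the $\underline B$ constraint kills the local-time mechanism that otherwise gives the cheaper $\tfrac{\pi^2}{8}$). The index set $\C{I}_\infty$ of the final same-sign run escapes the $\underline B$ constraint and keeps the $\tfrac{\pi^2}{8}$ rate; everything before it gets $\tfrac{\pi^2}{2}$. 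This is also why $I$ integrates \emph{both} $df_\mu$ and $dg_\mu$ with the $\tfrac{\pi^2}{2}$ coefficient before $s_{\mu+}$ and only $dg_\mu$ with $\tfrac{\pi^2}{8}$ afterwards---a bookkeeping your heuristic does not reproduce.

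The structural gap is the claimed ``strong Markov property of $x_B$ at its jump times.'' The process $h\mapsto x_B(h)$ is a deterministic, strongly non-local functional of the whole environment and is not a Markov process in $h$; you cannot factorise tracking probabilities over its jumps. What does factorise is the \emph{spatial} Markov property of $B$ (equivalently of the pair $(\overline R,\underline B)$) applied at the spatial points $Mx_i$ in increasing order along each side. The upper bound then needs the monotonicity of $(\overline R,-\underline B)$ in its starting point to uniformise over the unknown conditional state, and the lower bound needs an explicit construction of a positive-probability path set (the ``vessel'' built from confinement/hole/barrier blocks, with a reflected version for the final $\C{I}_\infty$ run) whose image under $B\mapsto x_B$ lands in a prescribed Skorokhod ball about the target step function. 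Simply invoking per-step asymptotics and summing will not close the lower bound, because the per-step events are not conditionally independent in the way a genuine Markov chain would allow, and because realising the $\tfrac{\pi^2}{8}$ rate on $\C{I}_\infty$ while simultaneously keeping the barrier on the opposite side tall requires a coordinated construction on both halves of the environment.

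Your remarks on exponential tightness via a union bound over a dyadic decomposition of $h$ are fine and match the paper's $\mathcal Q_a$ argument; goodness of $I$ then follows from the established bounds rather than from tightness alone.
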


Interestingly, it is easier to avoid creating deep wells on
one just of the axes than on both; this will be apparent
from the proof of the theorem. This is the main reason for
the two different factors $\pi^2/8$ and $\pi^2/2$.

In the flavor of the applications in \citet{ST}, we prove
the following simple result about weighted integrals of
$S(\cdot)$ along a geometric time scale.\vspace*{-2pt}

\begin{corollary}\label{c.application}
For $r\ge0$,
\[
\limsup_{a\to\infty} \frac{1}{a^2\log\log a}\int_0^1 t^r
S(e^{at})\, dt=\frac{4}{\pi^2} \biggl(\frac{2}{r+3} \biggr)^{\fracf
{r+3}{r+1}}.\vspace*{-2pt}
\]
\end{corollary}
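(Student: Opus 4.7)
The plan is to recognize the left-hand side as a linear functional of the graph occupation measure of the rescaled walk $\varphi_a(t):=S(e^{at})/(a^2\log\log a)$ and then apply Theorem \ref{FunLIL}. By Fubini,
$$\frac{1}{a^2\log\log a}\int_0^1 t^r S(e^{at})\,dt=F(m(\varphi_a)),\qquad F(\mu):=\int_{[0,1]\times\D{R}} t^r y\,d\mu(t,y).$$
Granted the continuity of $F$ on the families of measures at hand (the main technical point, discussed below), Theorem \ref{FunLIL} yields almost surely
$$\limsup_{a\to\infty}F(m(\varphi_a))=\sup_{\mu\in\C{K}}F(\mu),$$
so the problem reduces to a deterministic variational problem on $\C{K}$ together with a tightness check.

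To evaluate the supremum, disintegrate $\mu\in\C{M}$ at each $t$ between the upper and lower graphs with weights $\alpha(t),1-\alpha(t)$; then
$$F(\mu)=\int_0^1 t^r\bigl[\alpha(t)f_\mu(t)-(1-\alpha(t))g_\mu(t)\bigr]\,dt.$$
Since $f_\mu,g_\mu\ge 0$ and pointwise shrinking of $g_\mu$ can only decrease $I(\mu)$, the supremum is attained with $\alpha\equiv 1$ on $[0,1]$ and $g_\mu\equiv 0$, in which case $s_{\mu-}=0$ and \eqref{rate} collapses to $I(\mu)=\frac{\pi^2}{8}\int_0^\infty s^{-2}\,df_\mu(s)$. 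Writing $\nu:=df_\mu$ and using Fubini once more, the task becomes: maximize $(r+1)^{-1}\int(1-s^{r+1})_+\,d\nu(s)$ subject to $\int s^{-2}\,d\nu(s)\le 8/\pi^2$. This is linear in $\nu$ under a linear constraint, so an extremum is a single atom at the $s_*\in(0,1)$ maximizing $s^2(1-s^{r+1})/(r+1)$. Differentiating gives $s_*^{r+1}=2/(r+3)$, and substituting (using $1-s_*^{r+1}=(r+1)/(r+3)$) produces exactly $\frac{8s_*^2}{(r+3)\pi^2}=\frac{4}{\pi^2}\bigl(\tfrac{2}{r+3}\bigr)^{(r+3)/(r+1)}$.

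The main obstacle is the continuity step. The integrand $t^r y$ is continuous on $[0,1]\times\D{R}$ but unbounded in $y$, while local weak convergence only controls compactly supported test functions. Note that for $\mu\in\C{K}$, the inequality $\int_0^T s^{-2}\,df_\mu(s)\ge T^{-2}f_\mu(T)$ combined with either form of \eqref{rate} gives a uniform bound $f_\mu(T),g_\mu(T)\le CT^2$ with a universal $C$; so every limit point has bounded vertical support on $[0,1]$ and the supremum above is attained. To transfer this to the prelimit one needs the tightness statement
$$\lim_{M\to\infty}\limsup_{a\to\infty}\T{Leb}\bigl\{t\in[0,1]:|\varphi_a(t)|>M\bigr\}=0\quad\T{a.s.},$$
which follows from standard bounds on $\max_{n\le e^a}|S(n)|$ for Sinai's walk (cf.\ Theorem \ref{Hu Theorem}). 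Once tightness is established, truncating $y$ at height $M$ turns $F$ into a continuous compactly supported functional, and the matching lower bound for the $\limsup$ follows by choosing, via the second part of Theorem \ref{FunLIL}, a subsequence along which $m(\varphi_{a_n})$ converges to the explicit optimizer.
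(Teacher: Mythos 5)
Your overall route is the same as the paper's: write $\frac{1}{a^2\log\log a}\int_0^1 t^r S(e^{at})\,dt$ as the integral of $t^r y$ against the graph occupation measure of $\varphi_a$, use Theorem \ref{FunLIL} to identify the $\limsup$ with $\sup_{\mu\in\mathcal K}\int t^r y\,d\mu$, reduce to measures carried by the graph of a single nondecreasing $f$ (so $I(\mu)=\frac{\pi^2}{8}\int_0^\infty s^{-2}\,df$), and solve the resulting linear problem. Your variational step (the optimal $\nu=df$ is a single atom at $s_*$ with $s_*^{r+1}=2/(r+3)$ and mass $8s_*^2/\pi^2$) is a slightly different but equivalent version of the paper's computation, which treats general weights $\gamma$ with $t^3\gamma(t)$ nondecreasing by writing $f(t)=\int_0^t s^2\,dF(s)$ and integrating by parts; both yield the optimizer $f=\frac{8}{\pi^2}s_*^2\,\mathbf 1_{(s_*,\infty)}$ and the stated value.

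The genuine soft spot is the truncation step. The tightness you state, $\lim_{M}\limsup_{a}\mathrm{Leb}\{t\in[0,1]:|\varphi_a(t)|>M\}=0$, does not control the error $\int_0^1 t^r|\varphi_a(t)|\,\mathbf 1_{|\varphi_a(t)|>M}\,dt$: smallness of the exceedance set in Lebesgue measure says nothing about how large $\varphi_a$ is on it, so the upper bound $\limsup_a F(m(\varphi_a))\le\sup_{\mathcal K}F$ does not follow from the truncated functional alone. What is needed, and what the paper uses, is the a.s.\ eventual uniform bound $\sup_{t\in[0,1]}|\varphi_a(t)|\le 8/\pi^2+1$, coming from Theorem 1.3 of Hu--Shi: $\limsup_n \max_{k\le n}|S_k|/((\log n)^2\log\log\log n)=8/\pi^2$, and $(\log n)^2\log\log\log n=a^2\log\log a$ at $n=e^a$. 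With this, truncation at height $8/\pi^2+1$ is exact and there is no error term. Your parenthetical appeal to ``standard bounds on $\max_{n\le e^a}|S(n)|$'' is the right ingredient, but it must be used to give this sup bound rather than the Lebesgue-measure statement, and the reference should be to the Hu--Shi law of iterated logarithm, not to Theorem \ref{Hu Theorem}, which is Hu's localization estimate for the diffusion and does not by itself bound the running maximum of the walk. Two smaller points: after truncation the test function $t^r y\,\mathbf 1_{t\le 1,\,|y|\le M}$ is not continuous, but its discontinuity set is not charged by any $\mu\in\mathcal K$ when $M>8/\pi^2$ (since $f_\mu(t),g_\mu(t)\le 8t^2/\pi^2$), which is what justifies passing to the limit; and for the matching lower bound you should invoke the part of Theorem \ref{FunLIL} asserting that every element of $\mathcal K$ is a limit point, not the existence of a limit point along every sequence.
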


\begin{remark}
There is a connection between our results and Chung's Law of
iterated logarithm, which concerns the liminf behavior of the
running maximum of random walk $\{S_n\}$ with increments of zero
mean and variance~1. It states [see \citet{JP}]
\[
\liminf_{n\to\infty} \log\log n \frac{\max_{1\le i\le n}
S_i^2}{n}=\frac{\pi^2}{8}.
\]
Note the presence of the constant $\pi^2/8$ also here. The reason
is that the only way Sinai's walk will take an unusually large
value at a given time is if in a large interval the environment
does not create large wells, which could delay the walk. This, in
effect, confines the
environment to a small interval for a long time. In this sense,
our result is related to Wichura's theorem, a~functional law of
iterated logarithm for small values of the running absolute
maximum of Brownian motion; see \citet{MU}.\vadjust{\goodbreak}
\end{remark}

\subsection*{Orientation}  The structure of the paper is
as follows. The first goal is to prove Theorem~\ref{bLDP}. Thus, Section
\ref{UpperBoundSection} contains the large deviations upper
bound, and Section~\ref{specialSets} contains the lower
bound. Section~\ref{LDPSection} combines these two results
and exponential tightness to derive Theorem~\ref{bLDP}.
Section~\ref{EnvLIL} contains the proof of a functional law
of the iterated logarithm for the environment, that is, for
the family $(a^2\log\log a)^{-1}x_B(a \cdot ), a>e$. This
is combined in Section~\ref{MotLIL} with a localization
result to transfer the law to the motion. In Section
\ref{BrownianComputations} we estimate the probability that
Brownian motion stays in certain sets for large intervals
of time. The last section contains topological lemmas
needed in Sections~\ref{UpperBoundSection},~\ref{specialSets} and
\ref{EnvLIL}.

\section{The process of wells in the environment}\label{sec2} \label{stepSection}

Let $f\dvtx \R\to\R$ be a continuous function. In Section~\ref{sec1}, we
introduced the process of wells by the following informal
definition.

Consider the graph of $f$ as a vessel in which water is poured
gradually from the positive $y$ axis. The water forms several
increasing and merging puddles, also called wells. Then $x_f(h)$
is the $x$-coordinate of the bottom of the first-created well with depth
at least $h$.

We now proceed to give a more detailed definition. For each point
$x_0$ of local minimum for $f$, there are intervals $[a,c]$
containing $x_0$ with the property that $f(x_0)$ is the minimum
value of $f$ in $[a, c]$ and $f(a), f(c)$ are the maximum values
of $f$ on the intervals $[a, x_0], [x_0, c]$, respectively. Let
$[a_{x_0}, c_{x_0}]$ be the maximal such interval.
We call
$f|[a_{x_0}, c_{x_0}]$ the \textit{well} of $x_0$ and the number
\[
\min\{f(a_{x_0})-f(x_0), f(c_{x_0})-f(x_0)\}
\]
the \textit{depth}
of the well. We order wells by inclusion.

For $h>0$, if there is a minimal well of depth at least $h$ containing
zero in its domain, we define $x_f(h)$ to be the smallest point in the
domain of the well where $f$ attains its minimum value on the well. If
there is none, we let $x_f(h)=0$. Finally, we let $x_f(0)=0$.

For almost all two-sided Brownian paths $B$, for all $h>0$,
there is a unique point where $B$ attains its minimum in
the minimal well of depth at least $h$ containing 0, and
there is such a well. For such paths, $x_B$ is a
left-continuous step function. Moreover, $x_B$ has the
following monotonicity property: if $h_1<h_2$ and
$x_B(h_1), x_B(h_2)$ have the same sign, then
$|x_B(h_1)|\le|x_B(h_2)|$.

Finally, $x_B$ inherits a scaling property from Brownian motion,
namely for $a>0,$ %
%
\begin{equation}\label{xBScaling}
(x_B(a s))_{s\ge0}\stackrel{\C{L}}{=}(a^2x_B(s))_{s\ge0}.
\end{equation}
The first step toward the proof of Theorem~\ref{bLDP} is to study
the behavior of the function $x_B$. More precisely, we will try to
understand the probability that $x_B$ is close to a particular
step function.\vadjust{\goodbreak}

\begin{figure}[b]

\includegraphics{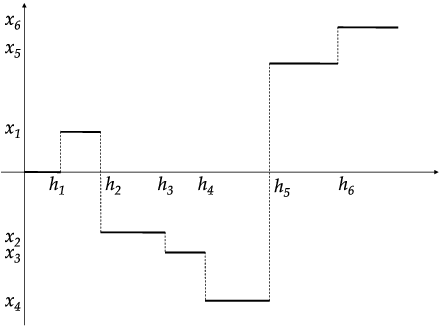}

\caption{The step function $\Phi_{\mathbf{h}, \mathbf{x}}$.} \label
{stepFunction}
\end{figure}

Toward this end, let $\C{S}$ be the set of all pairs of finite
sequences
\[
(\BS{h}, \BS{x}) \qquad\mbox{where }
\BS{h}:=(h_1,\ldots,h_N), \BS{x}:=(x_1,\ldots, x_N)
\]
for some
$N\ge1$, with the properties
\[
0<h_1<h_2<\cdots <h_N,  \qquad   x_1,
x_2,\ldots, x_N\in\R\setminus\{0\},
\]
and whenever $i<j$ and $x_i$, $x_j$ have the same sign, then
$|x_i|\le|x_j|$.

For notational convenience, we will also use the
indices $0, N+1, \infty$, and set
\[
x_0:=0, \qquad
x_\infty:=-x_1 \quad  \mbox{and} \quad   h_0:=0,  \qquad  h_{N+1}:=h_{\infty}:=2h_N.
\]
The mesh of the partition of $[0, h_N]$ induced by $\BS{h}$ is the number
\[
\mesh(\BS{h}):=\min\{h_i-h_{i-1}\dvtx  1\le i \le N\}.
\]

For a pair $(\BS{h}, \BS{x})$ as above, we let $\C{I}:=\{1,\ldots,
N\}$ and $\C{I}_\infty\subset\C{I}$ the largest set of
consecutive integers in $\C{I}$ containing $N$ and for which all
$x_i$ for $i\in\C{I}_\infty$ have the same sign.

For an index $i\in\C{I}$, let $i^-$ denote the greatest index
$j\in\C{I}$ less than $i$ so that $x_i$ and $x_j$ have the same sign, and
let $i^-=0$ if there is no such index. Similarly, let $i^+$
denote the least index $j\in\C{I}$ greater than $i$ so that $x_i$ and
$x_j$ have the same sign, and let $i^+=\infty$ if there is no such
index. In particular, $N^+=\infty$. Also let $\alpha, \beta$
denote the first index $i$ with positive and negative $x_i$,
respectively, again with the value $\infty$ if there is no such
index.

Consider the function $\Phi_{\mathbf{h}, \mathbf{x}}$ with domain
$[0, \infty)$
and value $x_0=0$ on the interval $[0, h_1]$, $x_i$ on the
interval $(h_i, h_{i+1}]$ for $i\in\{1, \ldots, N-1\}$ and $x_N$
on $(h_N, \infty)$; see Figure~\ref{stepFunction}. Recall the
definition of the graph occupation measure $m( \cdot )$ from the
\hyperref[sec1]{Introduction}, and let
\[
\mu_{\mathbf{h}, \mathbf{x}}:=m(\Phi_{\mathbf{h}, \mathbf{x}}).
\]
We will use the shorthand notation $I(\mathbf{h}, \mathbf{x})$ for
the rate
corresponding to this measure, namely
%
\begin{equation}\label{stepRate}I(\mathbf{h}, \mathbf{x}):=I(\mu
_{\mathbf{h}, \mathbf{x}})=\frac{\pi^2}{2}\sum_{i\in\C
{I}\setminus\C{I}_\infty}
\frac{|x_i-x_{i^-}|}{h_i^2}+\frac{\pi^2}{8}\sum_{i\in\C{I}_\infty
} \frac{|x_i-x_{i^-}|}{h_i^2}.
\end{equation}

\section{Confining Brownian motion---the upper bound}\label{sec3} \label
{UpperBoundSection}

The goal of this section is to prove the core of the large deviations
upper bound
of Section~\ref{LDPSection}.

In what follows, $m$ denotes the graph occupation
measure, $B$ a standard two-sided Brownian motion and $x$ the
process-of-wells mapping.

\begin{proposition}[(Large deviation upper bound)] \label{PropdensityBelow}
For each $\mu\in\C{M}$ and $A<I(\mu)$, there exists an open
neighborhood ${\mathcal U}$ of $\mu$ so that for all sufficiently
large $M$, we have
\[
\PP \bigl(m (x_B/M ) \in{\mathcal U}\bigr) \le e^{-AM}.
\]
\end{proposition}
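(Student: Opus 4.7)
My plan is to reduce this soft weak-topology statement to a finite-dimensional probability on the jumps of $x_B$, and then bound that using the Brownian confinement estimates developed in Section \ref{BrownianComputations}. Concretely, fix $\mu\in\C{M}$ with $I(\mu)>A$. Using the topological lemmas of the final section, together with lower semicontinuity of $I$ on $\C{M}$, I would first choose a step pair $(\BS{h},\BS{x})\in\C{S}$ with $I(\BS{h,x})>A$ and an $\eps>0$, and then build an open neighborhood $\C{U}$ of $\mu$ so that $m(x_B/M)\in\C{U}$ forces $x_B(h_i)/M\in(x_i-\eps,x_i+\eps)$ for every $i=1,\ldots,N$. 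Two structural facts drive this step: elements of $\C{M}$ are concentrated on graphs of monotone functions (the minimal left-continuous $f_\mu,g_\mu$), while $x_B$ is itself a left-continuous step function that is monotone on each sign side, so weak proximity of occupation measures forces pointwise proximity of the step functions at the chosen heights $h_i$.

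Once the event has been absorbed into $\bigcap_{i=1}^{N}\{x_B(h_i)/M\in(x_i-\eps,x_i+\eps)\}$, the defining property of $x_B$ --- the bottom of the minimal well of depth $\ge h_i$ enveloping $0$ --- rewrites it as a conjunction of constraints on $B$. By the spatial Markov property of two-sided Brownian motion (splitting at $0$ and at the intermediate loci $Mx_{i^-}$) together with the nesting of wells at increasing depths, the probability factors, up to an error $e^{O(\eps)M}$, into a product indexed by $i$; the $i$-th factor constrains $B$ on the spatial interval between $Mx_{i^-}$ and $Mx_i$ on the $x_i$-side of the origin.

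Each factor is then bounded via Section \ref{BrownianComputations}. For $i\in\C{I}_\infty$ --- the tail where $x_B/M$ never returns to the opposite axis --- only wells on one axis must be suppressed at depth $h_i$, yielding the one-sided bound $\exp(-\pi^2 M|x_i-x_{i^-}|/(8h_i^2))$. For $i\in\C{I}\setminus\C{I}_\infty$ the walk does return, so wells on both axes at this depth must be suppressed; as flagged in the remark following Theorem \ref{bLDP}, this two-sided event is strictly harder and produces the fourfold larger exponent $\exp(-\pi^2 M|x_i-x_{i^-}|/(2h_i^2))$. Multiplying yields $\exp(-MI(\BS{h,x}))\le e^{-MA}$ once $\eps$ is chosen small enough.

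The delicate step is the first one: the local weak topology is coarse and $x_B$ is random, so extracting pointwise control of $x_B(h_i)/M$ from only a weak-neighborhood hypothesis requires full use of the monotonicity and left-continuity noted in Section \ref{stepSection}, together with a careful choice of heights $h_i$ tailored to continuity features of $\mu$. A secondary subtlety is identifying the set $\C{I}_\infty$ correctly --- this is what ultimately yields the two strikingly different constants $\pi^2/8$ and $\pi^2/2$ in the rate function.
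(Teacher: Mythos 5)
Your overall architecture (pass to a step pair $(\BS{h},\BS{x})$ with $I(\BS{h,x})>A$ via a topological lemma, then factor along the spatial points $Mx_{i^-}$ and invoke the confinement estimates of Section \ref{BrownianComputations}) is the paper's skeleton, but the two steps that carry the actual content have gaps. First, the reduction you propose is not achievable: no neighborhood in the local weak topology can force $x_B(h_i)/M\in(x_i-\eps,x_i+\eps)$. Open conditions of the form $\nu(U)>0$ only certify that $x_B/M$ exceeds $x_i$ at \emph{some} depth near $h_i$, and conditions of the form $\nu(F)<\delta$ only bound occupation time, which cannot exclude a brief overshoot far beyond $x_i$, or a jump to the other sign, exactly at depth $h_i$ (left-continuity and one-sided monotonicity of $x_B$ do not help here). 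This is why the paper's neighborhoods \eqref{neighborhood} are deliberately one-sided, and why the information extracted is not pointwise proximity of $x_B(h_i)/M$ to $x_i$ but the environment statement $\ol{R}(Mx_i)<h_i+\eps$ (no ascent of height $h_i+\eps$ on the $x_i$ side before $Mx_i$). The one-sided statement is all that is needed for an upper bound, so this part of your plan is repairable, but as written the subsequent rewriting of the event through "the defining property of $x_B$ at the $h_i$" rests on control you do not have.

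The decisive gap is the source of the $\pi^2/2$ coefficient for $i\in\C{I}\setminus\C{I}_\infty$. "Suppressing wells on both axes at depth $h_i$" cannot produce the exponent $\pi^2|x_i-x_{i^-}|/(2h_i^2)$: the constraints on the opposite axis live on the spatial scales of the indices with the opposite sign and are already consumed by those factors, so attributing them again to index $i$ either double counts or gives the wrong spatial length. In the paper the factor of four comes from a \emph{vertical} constraint on the same stretch between $Mx_{i^-}$ and $Mx_i$: since $x_B$ later changes sign at some depth $h^*\le h_N+\eps$, the bottom of the corresponding well satisfies $B(x_B(h^*))=\ul{B}(x_B(h^*))\ge -h^*$, whence $\ul{B}(Mx_i)\ge -h_N-\eps$; and Lemma \ref{confinementCost}(c) shows that confinement of $R$ below roughly $h_i$ \emph{together with} this floor on $\ul{B}$ decays at rate $\pi^2/2$ rather than $\pi^2/8$. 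Your proposal cites only the heuristic remark after Theorem \ref{bLDP} and supplies no estimate; moreover the clean factorization itself needs the Markov property of the pair $(\ol{R},\ul{B})$ and its monotonicity in the starting point to remove the conditioning at $Mx_{i^-}$. Without the floor constraint and Lemma \ref{confinementCost}(c), your argument yields at best the exponent $\frac{\pi^2}{8}\sum_{i\in\C{I}}|x_i-x_{i^-}|/h_i^2$, which is strictly smaller than $I(\BS{h,x})$ whenever $\C{I}\setminus\C{I}_\infty\neq\emptyset$, so it cannot reach an arbitrary $A<I(\mu)$ and the proposition does not follow.
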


The proof of this proposition is given in Lemmas
\ref{densityBelow} and~\ref{probabLemma}. We first define
neighborhoods that will be easy to handle. Using the notation of
Section~\ref{stepSection}, for $(\mathbf{h}, \mathbf{x})\in\C{S}$ and
$\eps>0$, we define the following open set of measures:
%
\begin{eqnarray}\label{neighborhood}
&&{\mathcal U}(\mathbf{h}, \mathbf{x}, \eps)\nonumber\hspace*{-35pt}
\\[-5pt]
\\[-10pt]&& \quad := \left\{\nu\in
\mathcal{M}\dvtx
\begin{array}{l@{ \qquad }l}
\nu \bigl((h_i-\eps, h_i+\eps)\times(x_i, \infty)  \bigr)>0 & \mbox
{for $i\in\C{I}$, } x_i>0 \\
\nu \bigl((h_i-\eps, h_i+\eps)\times(-\infty, x_i) \bigr) >0& \mbox
{for $i\in\C{I}$, } x_i<0
\nonumber
\end{array}
 \right\}.\hspace*{-35pt}
\end{eqnarray}
We claim that these neighborhoods cover everything efficiently,
even for small $\eps$.

\begin{lemma} \label{densityBelow}
For each $\mu\in\C{M}$ and $A<I(\mu)$, there exists
$(\mathbf{h}, \mathbf{x})\in\mathcal S$ so that $I( \mathbf{h},
\mathbf{x})>A$ and
${\mathcal U}(\mathbf{h}, \mathbf{x}, \eps)\ni\mu$ for all $\eps>0$.
\end{lemma}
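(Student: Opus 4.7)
The plan is to construct $(\boldsymbol{h, x})\in\mathcal S$ whose step function $\Phi_{\boldsymbol{h, x}}$ samples the envelopes $f_\mu$ and $-g_\mu$ along a fine partition of a large truncation interval, designed so that (i) the step-rate sum (\ref{stepRate}) is a right-endpoint Riemann--Stieltjes approximation of $I(\mu)$ exceeding $A$, and (ii) at each node $h_i$ the corresponding envelope strictly exceeds $|x_i|$ just to the right of $h_i$, giving $\mu\in\mathcal U(\boldsymbol{h, x},\varepsilon)$ for every $\varepsilon>0$. By the reflection symmetry in the definition of $I$, assume $s_{\mu-}=\infty$. Fix $\eta>0$ with $A+3\eta<I(\mu)$; choose $T<\infty$ (with $T>s_{\mu+}$ when $s_{\mu+}<\infty$) so that the integrals in (\ref{rate}) restricted to $(0,T]$ sum to more than $A+2\eta$; and pick a partition $0<t_0<t_1<\cdots<t_K=T$ with $s_{\mu+}$ as a partition node (when finite) and mesh so small that the associated right-endpoint Riemann--Stieltjes sum exceeds $A+\eta$.

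\textbf{Construction.} For each cell $(t_{k-1},t_k]$ on which $f_\mu$ has positive increment, introduce a positive pair: choose $h_k^+\in(t_{k-1},t_k)$ close enough to $t_k$ that $f_\mu(h_k^+)$ is within $\eta_k$ of $f_\mu(t_k)$ (using left-continuity), and set $x_k^+=f_\mu(h_k^+)-\eta'_k$ for a small $\eta'_k>0$. Then $0<x_k^+<f_\mu(h_k^+)\le f_\mu(s)$ for all $s\ge h_k^+$, so $x_k^+<f_\mu(s)$ just to the right of $h_k^+$. Analogously, for each cell on which $g_\mu$ increases, introduce a negative pair $(h_k^-,x_k^-)$, with $h_k^-\ne h_k^+$ and, within cells contained in $(0,s_{\mu+}]$ carrying both types, ordered $h_k^-<h_k^+$. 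If $s_{\mu+}<\infty$ and no positive pair arose in the last cell below $s_{\mu+}$, insert a dummy positive pair $(h_*^+,x_*^+)$ with $h_*^+$ very close to $s_{\mu+}$ (valid because $f_\mu$ takes positive values arbitrarily close to $s_{\mu+}$ from the left whenever $s_{\mu+}>0$). Sort all pairs by $h$-value to obtain $(h_i,x_i)_{i=1}^N$; the monotonicity clause in $\mathcal S$ is inherited from that of $f_\mu,g_\mu$.

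\textbf{Verification and main obstacle.} With this ordering, every $h_i>s_{\mu+}$ carries $x_i<0$ (as $f_\mu$ is flat past $s_{\mu+}$), and every negative index in $(0,s_{\mu+}]$ is followed within its cell by a positive index; hence the same-sign suffix $\mathcal I_\infty$ is exactly the set of negative indices with $h_i>s_{\mu+}$. The two sums in (\ref{stepRate}) then match the two integrals in (\ref{rate}) cell by cell: each $|x_i-x_{i^-}|$ equals the Riemann--Stieltjes increment $\Delta f_k$ or $\Delta g_k$ up to $O(\eta'_k)$, and $1/h_i^2$ is within a factor $1+o(1)$ of $1/t_k^2$, giving $I(\boldsymbol{h, x})\ge (A+\eta)-o(1)>A$. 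For the neighborhood condition, each positive $x_i$ satisfies $x_i<f_\mu(s)$ on a right-neighborhood of $h_i$; by the minimality of $f_\mu$ as the upper envelope of $\supp(\mu)$, this forces $\mu$ to assign positive mass to $(h_i,h_i+\delta)\times(x_i,\infty)$ for small $\delta>0$, and hence to $(h_i-\varepsilon,h_i+\varepsilon)\times(x_i,\infty)$ for every $\varepsilon>0$; the negative case is symmetric. The principal delicacy is the structural alignment of $\mathcal I_\infty$ with the split at $s_{\mu+}$: without the within-cell ordering $h_k^-<h_k^+$ and the dummy positive separator near $s_{\mu+}$, a negative index in $(0,s_{\mu+}]$ could enter $\mathcal I_\infty$ and pick up the wrong coefficient $\pi^2/8$ in place of $\pi^2/2$, undercutting the Riemann-sum bound.
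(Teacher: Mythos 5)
Your overall architecture (truncate, take a fine partition, approximate $I(\mu)$ from below by a lower Riemann--Stieltjes sum with right-endpoint weights $t^{-2}$, and keep the same-sign suffix $\C{I}_\infty$ aligned with the split at the finite one of $s_{\mu\pm}$) runs parallel to the paper's Lemma \ref{densityBelow3}, but the verification that $\mu\in{\mathcal U}(\BS{h,x},\eps)$ contains a genuine gap. Membership in \eqref{neighborhood} requires actual $\mu$-mass, equivalently support points of $\mu$, in $(h_i-\eps,h_i+\eps)\times(x_i,\infty)$ for \emph{every} $\eps>0$. You deduce this from the inequality $x_i<f_\mu(s)$ on a right-neighborhood of $h_i$ ``by the minimality of $f_\mu$'', but that implication is false: minimality only says that $f_\mu$ does not increase unless forced by upper-half-plane support lying at or to the \emph{left} of the point in question, so $f_\mu$ can be strictly above $x_i$ on a whole interval that carries no upper-half-plane support whatsoever (the envelope simply propagates a level attained earlier). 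Concretely, suppose that inside the cell $(t_{k-1},t_k]$ the upper-half-plane part of $\supp(\mu)$ is the segment $[\sigma-\rho,\sigma+\rho]\times\{c\}$ with $\sigma+\rho\ll t_k$, while all mass over $(\sigma+\rho,t_k]$ sits on the graph of $-g_\mu$. Then $f_\mu$ has positive increment over the cell (so your rule creates a positive pair), $f_\mu\equiv c$ on $[\sigma+\rho,t_k]$, your node $h_k^+$ chosen near $t_k$ gets $x_k^+=c-\eta'_k$, and yet $\mu\bigl((h_k^+-\eps,h_k^++\eps)\times(x_k^+,\infty)\bigr)=0$ for all small $\eps$, so $\mu\notin{\mathcal U}(\BS{h,x},\eps)$.

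What is missing is precisely the device the paper uses: the nodes must be placed where the envelope value is \emph{witnessed by the support}, i.e.\ so that $(h_i,x_i')\in\supp(\mu)$ for some $x_i'$ strictly beyond $x_i$; this is hypothesis \eqref{nbhdAssumption} of Lemma \ref{NbhdAssumption}, which makes the neighborhood statement immediate because a small ball around $(h_i,x_i')$ lies inside $(h_i-\eps,h_i+\eps)\times(x_i,\infty)$. In the paper's proof of Lemma \ref{densityBelow3}, when a tentative partition point $h$ has $(h,f_\mu(h))\notin\supp(\mu)$, it is moved left to a point $h''$ near $h':=\sup\{\eta\le h:(\eta,f_\mu(\eta))\in\supp(\mu)\}$, where $f_\mu(h')=f_\mu(h)$ by minimality and $(h',f_\mu(h'))\in\supp(\mu)$; since $t^{-2}$ is decreasing, shifting nodes to the left can only increase the lower Stieltjes sum, so the bound $I(\BS{h,x})>A$ survives the relocation. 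Your construction, which pins $h_k^+$ near the right endpoint $t_k$ and argues only from the envelope inequality, gives no reason for support points above $x_i$ at abscissas converging to $h_i$, and the same repair (or an equivalent support-witness selection, also needed for your ``dummy'' pair near the finite $s_{\mu\pm}$) is required to close the argument.
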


The proof of this topological lemma is standard, but a bit
technical. We postpone it to Section~\ref{s.topology}.

\begin{lemma} \label{probabLemma} For $(\mathbf{h}, \mathbf{x})\in
\mathcal S$,
$A<I(\BS{h}, \BS{x})$ and all small enough $\eps>0$, there is an
integer $M_\eps$ so that \label{probabLemma2}
%
\begin{equation}\label{stepUpperBound}
\PP\bigl (m (x_B/M ) \in{\mathcal U}(\mathbf{h}, \mathbf{x},
\eps) \bigr)\le e^{-AM}  \qquad\mbox{for all }M\ge M_\eps.
\end{equation}
\end{lemma}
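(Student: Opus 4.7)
The plan is to translate the topological condition $\{m(x_B/M)\in \mathcal{U}(\boldsymbol{h,x},\varepsilon)\}$ into concrete geometric constraints on the Brownian environment $B$, factor the resulting event using independence of $B$ on disjoint spatial intervals, and then apply the Brownian stay-in-strip estimates from Section~\ref{BrownianComputations}. Because $x_B$ is a left-continuous step function that is monotone in absolute value on each sign, membership of $m(x_B/M)$ in $\mathcal{U}(\boldsymbol{h,x},\varepsilon)$ is equivalent to a collection of conditions of the form: ``at some height $h\in(h_i-\varepsilon,h_i+\varepsilon)$, the minimal well of $B$ of depth at least $h$ containing $0$ has its bottom on the sign-$x_i$ side at distance at least $M|x_i|$ from the origin.'' In particular, on each spatial interval $[M|x_{i^-}|, M|x_i|]$ on the $\operatorname{sign}(x_i)$-axis (with $x_0=0$), $B$ must contain no well of depth $\ge h_i-\varepsilon$.

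The positive and negative half-axes contribute restrictions of $B$ that are independent, and on each half-axis the spatial sub-intervals are pairwise disjoint, so the strong Markov property reduces the probability to a product (up to harmless boundary terms) of one-interval staying probabilities. Two regimes arise. For $i\in\mathcal{I}_\infty$ there is no subsequent opposite-sign index, and the only constraint on $B$ over a spatial length $L_i=M|x_i-x_{i^-}|$ is that no well of depth $\ge h_i-\varepsilon$ forms; by Levy's identity the drawdown from the running maximum has the law of $|B|$, so this probability is bounded by $C\exp(-\pi^2 L_i/(8(h_i-\varepsilon)^2))$, which produces the $\pi^2/8$ coefficient in \eqref{stepRate}. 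For $i\in\mathcal{I}\setminus\mathcal{I}_\infty$ some later index $j$ carries the opposite sign, and for $x_B$ to eventually switch to that side at height $h_j$ the value of $B$ at $Mx_i$ must itself be confined on both sides within height roughly $h_i$ of the reference level; otherwise the current well containing $0$ would persist as the minimal one at height $h_j$ and block any opposite-sign candidate. This upgrades the one-sided ``no deep well'' condition to genuine two-sided strip-staying, whose probability is bounded by $C\exp(-\pi^2 L_i/(2(h_i-\varepsilon)^2))$, producing the $\pi^2/2$ coefficient.

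Multiplying the estimates over $i$ and comparing with \eqref{stepRate}, the total log-probability is at most $-M\,I(\boldsymbol{h,x}) + C_1\varepsilon M + C_2$ for constants depending only on $(\boldsymbol{h,x})$. Since $A<I(\boldsymbol{h,x})$, first choose $\varepsilon$ so small that $C_1\varepsilon < I(\boldsymbol{h,x})-A$, then take $M_\varepsilon$ large enough to absorb the remaining additive constant; this gives \eqref{stepUpperBound}. The substantive difficulty is justifying the dichotomy between the one-sided ($\pi^2/8$) and two-sided ($\pi^2/2$) regimes: showing that for $i\notin\mathcal{I}_\infty$ the presence of a later opposite-sign $x_j$ actually confines $B$ on both sides at position $Mx_i$, rather than merely forbidding deep wells on the current side, requires a careful monotonicity-of-wells argument that tracks which well containing $0$ becomes minimal as the height increases. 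The other routine pieces, namely quantifying the boundary overlap in the Markov decomposition and converting the $(h_i-\varepsilon)$ denominators into the clean $h_i$'s appearing in $I(\boldsymbol{h,x})$, contribute only to the $O(\varepsilon M)$ error.
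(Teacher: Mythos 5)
Your overall reduction -- translate membership in $\mathcal{U}(\boldsymbol{h,x},\eps)$ into per-interval constraints on the environment, factor via the Markov property along the points $Mx_{i^-}$, apply Brownian confinement estimates, and absorb the $O(\eps)$ errors using $A<I(\boldsymbol{h,x})$ -- is the same skeleton as the paper's proof, and the $\mathcal{I}_\infty$ factors are essentially right (one small slip: the event only rules out ascents of height $\ge h_i+\eps$ before $Mx_i$, not wells of depth $\ge h_i-\eps$, so the denominators you can justify are $(h_i+\eps)^2$; this is harmless). The factorization also needs a word about starting points, handled in the paper by monotonicity of $(\overline R,-\underline B)$ in its initial value; minor.

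The genuine gap is the $\pi^2/2$ case, which is the whole point of the lemma. Your claim that for $i\notin\mathcal{I}_\infty$ the event forces $B$ to be ``confined on both sides within height roughly $h_i$'' near $Mx_i$ is false, and no monotonicity-of-wells argument can deliver it: the environment on the $x_i$ side may drift steadily down towards $-h_N$ without ever creating an ascent of height $h_i+\eps$, and $x_B$ can still follow $\Phi_{\boldsymbol{h,x}}$ provided the opposite side is deeper, so two-sided strip-staying at width $\approx h_i$ simply does not hold on the event. What the event actually yields (and what the paper extracts, via the observation that at the sign-change height $h^*\le h_N+\eps$ the well bottom equals the running minimum and is $\ge -h^*$) is only the pair of constraints $\overline R(Mx_i)<h_i+\eps$ and $\underline B(Mx_i)\ge -h_N-\eps$. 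The honest two-sided confinement this gives is to the strip $[-h_N-\eps,\,h_i+\eps]$, whose naive staying rate $\pi^2/\bigl(2(h_N+h_i)^2\bigr)$ is weaker than even the one-sided $\pi^2/(8h_i^2)$. The substantive fact is that drawdown control at level $h_i+\eps$ \emph{combined with} a floor at any fixed depth $K$ already has rate $\pi^2/\bigl(2(h_i+\eps)^2\bigr)$, uniformly in $K$; this is Lemma \ref{confinementCost}(c), proved by subdividing $[0,t]\times[-K,0]$ into rectangles and using that $\underline B$ is monotone with total decrease at most $K$, so on all but a vanishing fraction of time blocks the path is effectively confined to a window of width $(1+o(1))$ times the drawdown bound. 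Without this ingredient (or an equivalent), your asserted bound $C\exp\bigl(-\pi^2 L_i/(2(h_i-\eps)^2)\bigr)$ is unsupported, and the dichotomy between $\pi^2/8$ and $\pi^2/2$ -- exactly what the lemma must establish -- is not proved.
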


\begin{figure}[b]

\includegraphics{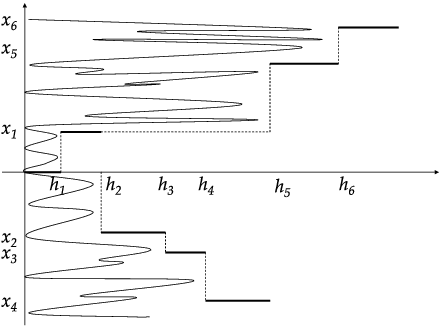}

\caption{The restrictions on the reflected process.} \label{reflected}
\end{figure}

For a two-sided Brownian motion $B$, we define its reflection from
its past minimum as the process
%
\begin{equation}\label{RBM}
R(t):=B(t)-\inf\{B(s)\dvtx  s \mbox{ between 0 and } t\}
\end{equation}
for all $t\in\D{R}$. This process appears naturally in the study
of the wells created by $B$.\vadjust{\goodbreak}

\begin{pf*}{Proof of Lemma~\ref{probabLemma}}
For a locally bounded function $Q\dvtx \D{R}\to\D{R}$, define
%
\begin{eqnarray}\label{runningMin}
\ul{Q}(t)&:=&\inf\{Q(s)\dvtx  s \mbox{ between 0 and } t\},
\\\label{runningMax}
\ol{Q}(t)&:=&\sup\{Q(s)\dvtx  s \mbox{ between 0 and } t\}
\end{eqnarray}
for all $t\in\D{R}$.

Recall the mapping $\mu\mapsto(f_\mu, g_\mu)$ defined in the
\hyperref[sec1]{Introduction}. For almost all Brownian paths $B$, the measure
$\mu:=m (x_B/M )$ satisfies
\[
Mf_\mu=f_{m(x_B)}=\overline{ x_B},\qquad M
g_\mu=g_{m(x_B)}=\overline{ (x_B)^-}.
\]
Also, $\mu\in{\mathcal U}(\mathbf{h}, \mathbf{x}, \eps)$
implies $f_\mu(h_i+\eps)>x_i$ for $x_i>0$, and similarly
for $x_i<0$. Then, for $i\in\C{I}$ with $x_i>0$, we have
\[
\overline{ x_B}(h_i+\eps)>Mx_i  \quad \Rightarrow \quad \ol{R}(M x_i)<h_i+\eps,
\]
because otherwise an ascent on the right with height at least
$h_i+\eps$ is created before $M x_i$. This can be paired
with an ascent on the negative axis of height at least $h_i+\eps$, and
the two will make $\ol{x_B}(h_i+\eps)$ to be located in $(-\infty,
M x_i]$, a~contradiction. This and the symmetric argument for
negative $x_i$ shows that, on the event in the statement of the
lemma, we have
\[ \label{upRestrictions}
\ol{R}(M x_i)<h_i+\eps \qquad \mbox{for all } i\in\C{I}.
\]
For $M=1$, a realization of the process $R$ satisfying these
restrictions is depicted in Figure~\ref{reflected}.

There is one more piece of information we have for the path at the
points $Mx_i$ for all indices $i\in\C{I}\setminus\C{I}_\infty$
when this
set is nonempty. That is,
\[
\underline{B}(M x_i)\ge-h_N-\eps.
\]
To see this, assume without loss of generality that $x_i>
0$. Since $\mu\in{\mathcal U}(\mathbf{h}, \mathbf{x}, \eps)$, we have
\[
\mu \bigl((h_i-\eps, h_i+\eps)\times(x_i, \infty) \bigr)>0,
\]
and thus $x_B(h')>M x_i>0$ for some $h'\in(h_i-\eps,
h_i+\eps)$. Let
\[
h^*:=\inf\{h>h'\dvtx x_B(h)<0\}.
\]
We first argue that $h^*$ is well defined, that is, the above
set is not empty. Since $i\in\C{I}\setminus\C{I}_\infty$,
there is $j>i$ with $x_j<0$, and thus $x_B(h)<0$ for some
$h\in(h_j-\eps, h_j+\eps)$. Since $\eps<
\operatorname{mesh}(\BS{h})/2$, we have $h_j-\eps>h_i+\eps$, and so
indeed we have $h>h'$. Also, $h^*<h_N+\eps$.

At $h^*$, $x_B$ is
positive because it is left continuous, but just after that
it is negative. This means that the well of $x_B(h^*)$
has depth exactly $h^*$. But
$B(x_B(h^*))=\ul{B}(x_B(h^*))$, and combining this with
$x_B(h^*)\ge x_B(h')>M x_i$, we obtain
\[
-h_N-\eps<-h^* \le\ul{B}(x_B(h^*)) \le\ul{B}(M x_i).
\]
Thus the event of the lemma is
contained on the event
\[
C_M:= \left\{
\begin{array}{l@{ \qquad }l}
\ol{R}(M x_i)<h_i+\eps&\mbox{for } i\in\C
{I},\\
\underline{B}(M x_i)\ge-h_N-\eps&\mbox{for } i\in\C
{I}\setminus\C{I}_\infty
\end{array}
 \right\}.
\]
Recall from Section~\ref{stepSection} that $i^-$ refers to the
index preceding $i$ so that $x_i$ and $x_{i^-}$ have the same
sign. Let $\PP_{r,y}$ denote the law of the Markov process
$(\overline{R},\underline{B})$ started at the point $(r,y)$. By
the Markov property applied consecutively at $M x_{i^-}$ for $i\in
\C{I}$, we get
\begin{eqnarray*}\PP(C_M)&\le& \prod_{i\in\C{I}\setminus\C{I}_\infty}
\sup_{r\ge0, y\le0} \PP_{r,y}\bigl(\ol{R}\bigl(M
(x_i-x_{i^-})\bigr)<h_i+\eps,\\
&&\hphantom{\prod_{i\in\C{I}\setminus\C{I}_\infty}
\sup_{r\ge0, y\le0} \PP_{r,y}\bigl(}
\underline{B}\bigl(M (x_i-x_{i^-}) \bigr)\ge-h_N-\eps\bigr) \\
&&{}\times \prod_{i\in\C{I}_\infty} \sup_{r\ge0, y\le0} \PP
_{r,y}\bigl(\ol{R}\bigl(M (x_i-x_{i^-})\bigr)<h_i+\eps\bigr).
\end{eqnarray*}
As usual, the product over an empty index set is 1.
Note that the process $(\overline{R},-\underline{B})$ is
nondecreasing in both coordinates of its starting point $(r,-y)$.
Therefore we have the upper bound
\begin{eqnarray*}
&&\prod_{i\in\C{I}\setminus\C{I}_\infty}\PP_{0,0}\bigl(\ol{R}\bigl(M
(x_i-x_{i^-})\bigr)<h_i, \underline{B}\bigl(M (x_i-x_{i^-}) \bigr)\ge
-h_N-\eps\bigr)
\\&& \qquad {}  \times\prod_{i\in\C{I}_\infty} \PP_{0,0}\bigl(\ol{R}\bigl(M
(x_i-x_{i^-})\bigr)<h_i+\eps\bigr).
\end{eqnarray*}
Then Lemma~\ref{confinementCost} implies that
\[
\lim_{M\to\infty}
\frac{\log\PP(C_M)}M\le-I(\BS{h}, \BS{x})-o(\eps),
\]
where
$o(\eps)$ depends on $(\mathbf{h}, \mathbf{x})$ only. The claim follows.
\end{pf*}

\section{Making a vessel---the lower bound}\label{sec4} \label{specialSets}

The goal of this section is to prove the large deviation
lower bound. This can be formulated as follows:

\begin{proposition}[(Large deviation lower bound)]
\label{PropLowerLDP} For every open set $G\subset\mathcal
M$, every $A>\inf_G I$ and for all sufficiently large $M$,
we have
\[
\PP\bigl(m(x_B/M)\in G\bigr) \ge e^{-AM}.
\]
\end{proposition}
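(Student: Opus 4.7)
The plan mirrors the upper-bound proof of Proposition \ref{PropdensityBelow} in reverse: reduce the target to a step-function measure via topological approximation, build an explicit Brownian event realizing that step pattern, and factor its probability by the strong Markov property. The needed estimates will be the matching lower-bound companions to Lemma \ref{confinementCost}.

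Since $A>\inf_G I$, first pick $\mu^{*}\in G$ with $I(\mu^{*})<A$. A topological approximation lemma dual to Lemma \ref{densityBelow}, to be placed in Section \ref{s.topology}, would produce $(\BS{h,x})\in\C{S}$ with $I(\BS{h,x})<A$ whose step-function measure $\mu_{\BS{h,x}}$ lies in $G$, together with some $\eps>0$ so that any $\nu\in\C{M}$ whose associated $(f_\nu,g_\nu)$ are uniformly within $\eps$ of $(f_{\mu_{\BS{h,x}}},g_{\mu_{\BS{h,x}}})$ still lies in $G$. The construction of $(\BS{h,x})$ proceeds by discretizing the minimal monotone functions $f_{\mu^{*}},g_{\mu^{*}}$ finely enough that the rate \eqref{stepRate} of the resulting step function stays below $A$; this direction is slightly more delicate than Lemma \ref{densityBelow} because one now needs the strict inequality to be preserved after discretization.

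Next, construct a Brownian event $E_M$ forcing $m(x_B/M)$ into this neighborhood of $\mu_{\BS{h,x}}$. On each spatial interval between consecutive same-sign locations $Mx_{i^-}$ and $Mx_i$, the reflected process $\overline{R}(s)=B(s)-\underline{B}(s)$ should be required to stay below $h_i+\eps$, preventing any premature well of depth $\ge h_i$ from closing; for indices $i\in\C{I}\setminus\C{I}_\infty$ the event would additionally impose $\underline{B}(Mx_i)\ge -h_N-\eps$, matching the two-sided constraint used in the upper bound. On a constant-length transition window around each $Mx_i$, $B$ is required to descend sharply, creating a new local minimum with depth near $h_i$ at an $x$-coordinate within $\eps M$ of $Mx_i$. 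These constraints together pin $x_B$ close to the scaled step function $M\Phi_{\BS{h,x}}$, so that $m(x_B/M)$ is close to $\mu_{\BS{h,x}}$ in the local weak topology.

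Applying the strong Markov property at the successive transition points $Mx_{i^-}$ factors $\PP(E_M)$ into confinement probabilities over intervals of length $M|x_i-x_{i^-}|$ in strips of width $h_i+O(\eps)$. By the Brownian eigenvalue asymptotics underlying Lemma \ref{confinementCost}, the lower-bound companion estimates give factors $\exp\{-(\pi^2/8)M|x_i-x_{i^-}|/h_i^2-o(M)\}$ for $i\in\C{I}_\infty$ (one-sided reflection) and $\exp\{-(\pi^2/2)M|x_i-x_{i^-}|/h_i^2-o(M)\}$ for $i\in\C{I}\setminus\C{I}_\infty$ (two-sided confinement), while the constant-length transition events contribute only subexponential factors. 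Multiplying and comparing with \eqref{stepRate} yields $\PP(E_M)\ge e^{-(I(\BS{h,x})+o(1))M}\ge e^{-AM}$ for sufficiently small $\eps$ and large $M$. The main obstacle lies in engineering the transition windows so they simultaneously create a local minimum of the correct depth at the correct location, realize a sufficient ascent on the far side to guarantee the new well's depth, and glue to the adjacent confinement intervals without disturbing earlier wells; once these local events are built, they each have probability bounded below by a constant independent of $M$, and the Markov factorization delivers the desired exponential lower bound.
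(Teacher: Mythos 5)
Your overall skeleton (reduce to a step function via a topological lemma, build an explicit path event, factor by the Markov property, invoke Lemma \ref{confinementCost}-type costs) matches the paper's, and your rate bookkeeping against \eqref{stepRate} is right. But there is a genuine gap in the construction of the event: for $i\in \C{I}\setminus \C{I}_\infty$ you impose only the upper bound's \emph{necessary} conditions --- reflected process below $h_i+\eps$ on $[Mx_{i^-},Mx_i]$ and the floor $\ul{B}(Mx_i)\ge -h_N-\eps$ --- and these are not \emph{sufficient} to pin $x_B$ near $M\Phi_{\BS{h,x}}$. Concretely, take $N=2$, $x_1>0$, $x_2<0$, $h_1<h_2$. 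Your event allows $B$ to dip to level $\approx -h_2-\eps$ early in $(0,Mx_1)$ (the reflected constraint only caps the subsequent rise, and the floor constraint is saturated, not violated). Then at depths just above $h_2$, the bottom of the well containing $0$ can remain on the positive side: nothing in your event forces the minimum created near $Mx_2$ on the negative axis (where you only constrain the reflection, at cost $\pi^2/8$) to lie \emph{below} that early dip. So $x_B(h)$ stays positive for $h>h_2$ and $m(x_B/M)\notin G$; the inclusion $\{E_M\}\subset\{m(x_B/M)\in G\}$ fails even though $\PP(E_M)$ has the right exponential order. This is exactly why the paper's vessel $\Rhxde$ does \emph{not} reuse the event of Lemma \ref{confinementCost}(c): on two-sided stretches it confines the path itself to $[-\eps^2 h_i, h_i]$ (floor essentially at zero, cost $\pi^2/2$ via Lemma \ref{confinementCost}(a) and Lemma \ref{blocksCosts}(a)), creates holes of calibrated \emph{small} depths $\approx \eps h_i$, strictly interlaced so each new hole is a new minimum, and erects barriers of height $h_{i^+}$ (and $2h_N$ opposite the final run) so that $x_B$ is frozen between jump depths; the $\pi^2/8$ rate is used only on the final same-sign run, where the restarted reflection is confined and the extra cap \eqref{res2} prevents a premature ascent above $h_q$ measured from the older minima. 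The coincidence of the constants in Lemma \ref{confinementCost}(a) and (c) is what makes this smaller event cost no more than yours.

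Two smaller points. First, your ``constant-length transition windows with probability bounded below independent of $M$'' would have to host a descent to a new running minimum \emph{and} an ascent of height about $h_{i^+}$ while gluing to the adjacent confinement blocks; the paper instead uses windows of length $\gd x_i M$ (the Hole and Barrier blocks), which cost exponentially but only $O_{\BS{h,x}}(\gd)$ in the rate --- your variant may be salvageable, but the gluing is not automatic and is precisely where the calibrated hole depths and \eqref{res2} enter. Second, your topological step should produce a neighborhood measured only through the window $[0,2h_N]$ (as in Lemmas \ref{vidor} and \ref{darthvader}); a uniform approximation of $(f_\mu,g_\mu)$ on all of $[0,\infty)$ is neither achievable by the finite construction nor needed.
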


Again, we proceed in two steps. We will first define a
convenient set of Brownian paths, $\mathcal R(\mathbf{h}, \mathbf
{x},\eps,
\delta)$, and then prove a topological lemma that reduces
the problem to showing that these paths have high
probability.

\begin{lemma} \label{rateFunctionApprox}
For every open $G\subset\mathcal M$, and every $A>\inf
_G I$, there exists $(\mathbf{h}, \mathbf{x})\in\mathcal S$ so that
$I(\mathbf{h}, \mathbf{x})<A$ and
\[
\{m(x_B)\dvtx  B\in\mathcal R(\mathbf{h}, \mathbf{x},\eps, \eps)\}
\subset G
\]
for all small enough $\eps>0$.
\end{lemma}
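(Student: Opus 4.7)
The plan is to find $(\BS{h},\BS{x})\in \C{S}$ so that $I(\BS{h},\BS{x})<A$ and some local-weak neighborhood of $\mhx$ lies inside $G$. The set $\C R(\BS{h},\BS{x},\eps,\eps)$ will be defined (in the next step of the paper) so as to force the wells process $x_B$ to track $\phx$ within tolerance $\eps$ on the relevant horizontal window, making $m(x_B)$ lie in that neighborhood once $\eps$ is small enough, so the conclusion follows. The content of this lemma is therefore purely topological: approximate an arbitrary $\mu_0\in\C M$ with $I(\mu_0)<A$ by a step-function measure $\mhx$ that is simultaneously close in local weak topology and has rate below $A$.

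I would pick $\mu_0\in G$ with $I(\mu_0)<A$ and assume $s_{\mu_0-}=\infty$ (the case $s_{\mu_0+}=\infty$ is symmetric, swapping the roles of $f_{\mu_0}$ and $g_{\mu_0}$). Fix a basic neighborhood $\C V\subset G$ of $\mu_0$, which depends only on the restriction of measures to some bounded rectangle $[0,H]\times[-M,M]$. Choose $h_N>H$ large enough that $g_{\mu_0}(h_N)>M$ (so the terminal strip $(h_N,\infty)\times\{x_N\}$ in the support of $\mhx$ falls outside the rectangle) and such that $\int_{h_N}^\infty t^{-2}\,dg_{\mu_0}(t)$ is arbitrarily small. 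Then choose a fine partition $0<h_1<\cdots<h_{N-1}<h_N$ and, on each block $(h_{i-1},h_i]$, declare $x_i=f_{\mu_0}(h_i)$ or $x_i=-g_{\mu_0}(h_i)$ according to which of these two changes on that block; degenerate blocks where both are constant are discarded. Monotonicity of $f_{\mu_0},g_{\mu_0}$ gives the sign/ordering requirements of $\C S$, and since $g_{\mu_0}$ is unbounded, a terminal run of negative-sign indices forms $\C I_\infty$.

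As the partition on $[0,h_N]$ is refined and $h_N\to\infty$, the sum \eqref{stepRate} converges to $I(\mu_0)$: the $\pi^2/2$ block is a Riemann--Stieltjes approximation of $\int_0^{h_N}t^{-2}\,d(f_{\mu_0}+g_{\mu_0})$, while the $\pi^2/8$ block telescopes along the fixed negative sign to $\int_{h_N}^\infty t^{-2}\,dg_{\mu_0}$. Since $I(\mu_0)<A$, refined enough data give $I(\BS{h},\BS{x})<A$, and simultaneously standard weak-convergence arguments yield $\mhx\to\mu_0$ in local weak topology, so $\mhx\in\C V\subset G$. The main obstacle is the $1/t^2$ singularity at the origin: finiteness of $I(\mu_0)$ forces $f_{\mu_0}+g_{\mu_0}$ to vanish sufficiently fast at $0$, but one must still guarantee that placing $x_1$ at height $h_1$, while $\phx\equiv0$ on $[0,h_1]$, does not let the leading term $\tfrac{\pi^2}{2}|x_1|/h_1^2$ overshoot $\int_0^{h_1}t^{-2}\,d(f_{\mu_0}+g_{\mu_0})$. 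Handling this requires an initial truncation that absorbs this lone first term into a small error, chosen before the rest of the partition is built. Once such $(\BS{h},\BS{x})$ is in hand, the lemma follows because $\C R(\BS{h},\BS{x},\eps,\eps)$ will be set up precisely so that $m(x_B)$ lies in any preassigned local-weak neighborhood of $\mhx$ provided $\eps$ is small enough, and so in particular inside $\C V\subset G$.
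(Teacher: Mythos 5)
Your overall architecture is the same as the paper's: pick $\mu_0\in G$ with $I(\mu_0)<A$, approximate it by a step measure $\mhx$ whose rate is still below $A$, make sure membership in the chosen neighborhood is tested only on a window contained in $[0,2h_N]$ (your choice $h_N>H$ plays the role of the paper's requirement $G_{\BS{h,x}}\in\mathcal{T}_{2h_N}$ in Lemmas \ref{vidor} and \ref{darthvader}), and then invoke the Skorokhod-closeness of $x_B$ to $\phx$ on $[0,2h_N]$ furnished by $\Rhxde$ (Proposition \ref{vessel}, \eqref{hvariation}--\eqref{xvariation}). The gap is in the approximation step itself. Closeness of $\mhx$ to $\mu_0$ in the local weak topology is \emph{not} a statement about $f_{\mu_0},g_{\mu_0}$ alone: elements of $\C M$ are occupation measures, and two measures with identical $f,g$ but a different split of Lebesgue measure between the upper and lower graphs are far apart. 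Your rule assigns $x_i$ only on blocks where $f_{\mu_0}$ or $g_{\mu_0}$ changes and discards the rest, so on stretches where both are constant the step function keeps whatever value it last had and may sit on the wrong graph. For instance, if $\mu_0=m(\varphi)$ with $\varphi$ alternating between the constant levels $+1$ and $-1$ on long intervals, your $\Phi_{\BS{h,x}}$ stays at $-1$ after the last change of $g_{\mu_0}$ and misses all the later mass of $\mu_0$ on the upper graph inside $[0,H]\times[-M,M]$, so $\mhx\notin\C V$. (The rule is also undefined when $f_{\mu_0}$ and $g_{\mu_0}$ both increase on a block, which happens on every block if both are strictly increasing.) This is exactly what the paper's proof of Lemma \ref{vidor} adds: it approximates $\mu_0(\,\cdot\,\times\R^+)$ by a $\{0,1\}$-valued step density $q_n$ and lets the step function follow $f^{(n)}$ where $q_n=1$ and $-g^{(n)}$ where $q_n=0$, with \eqref{supportPoints} reconciling jump times; without some such device the weak convergence $\mhx\to\mu_0$ you assert simply need not hold.

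The rate bookkeeping is also off in a way that matters. In \eqref{stepRate} the $\pi^2/8$ sum runs over the terminal same-sign run $\C I_\infty$, i.e.\ over heights $h_i\le h_N$; nothing in $I(\BS{h,x})$ "telescopes to" $\int_{h_N}^\infty t^{-2}\di g_{\mu_0}$ --- that tail is simply dropped (and is small because $I(\mu_0)<\infty$). The point you never address is $s_{\mu_0+}$: under your assumption $s_{\mu_0-}=\infty$, the increments of $g_{\mu_0}$ on $(s_{\mu_0+},\infty)$ carry coefficient $\pi^2/8$ in \eqref{rate}, so if any substantial part of them lands outside $\C I_\infty$ in your step measure it is charged $\pi^2/2$, and $I(\BS{h,x})$ can exceed $I(\mu_0)$ by up to a factor $4$ on that part and overshoot $A$; note this interacts with the fix to the previous issue, since interlacing signs to match the occupation split is what determines $\C I_\infty$. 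You must therefore argue that $\C I_\infty$ covers essentially all partition points above $s_{\mu_0+}$, which is what the paper arranges through its careful treatment near $s_{\mu-}$ (including a possible jump of the persisting function exactly there, \eqref{jumpPoint}). Finally, a small point: the requirement $g_{\mu_0}(h_N)>M$ may be unsatisfiable ($g_{\mu_0}$ can be bounded even when $s_{\mu_0-}=\infty$) and is unnecessary, since $h_N>H$ already places the terminal strip outside the test rectangle.
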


The proof of this
topological lemma is postponed to Section~\ref{s.topology}.
In light of this lemma, it suffices to give a lower bound
on the probability that for $B$ two-sided Brownian motion,
$m(x_B)$ is close to $\mhx$ in the large deviation regime.
In fact, we will do this for a more restrictive set, namely for the
event that $x_B$ is close in the Skorokhod topology to $\phx$.

Recall the Skorokhod topology on left continuous paths on $[0,\infty)$
with right limits. We call a set $A$ of these paths an
$[a,b]$-Skorokhod neighborhood of $f$ if it is the inverse image
of a Skorokhod neighborhood of $f|[a,b]$ under the restriction
map.

\begin{proposition}\label{vessel}
Let $(\BS{h}, \BS{x})\in\mathcal S$. Then every
$[0,2h_N]$-Skorokhod neighborhood of $\phx$ contains the image of the
set $\Rhxde$ under the map $x$ for all $ \delta, \varepsilon$ small
enough, and Brownian motion $B$ satisfies
%
\begin{equation} \label{RsetCost}
\lim_{M\to\infty} \frac{1}{M}\log\PP\bigl(B(M  \cdot  )\in\Rhxde\bigr)
=-I(\BS{h}, \BS{x})+O_{\BS{h}, \BS{x}}(\gd, \eps).
\end{equation}
\end{proposition}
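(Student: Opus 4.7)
The plan is to construct $\Rhxde$ explicitly as an intersection of Brownian confinement-and-descent events indexed by $\C{I}$, factor its probability via the strong Markov property and Lemma \ref{confinementCost}, and verify the Skorokhod topological claim directly from the construction.

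\textbf{Construction of $\Rhxde$.} For each $i\in\C{I}$, let $J_i$ be the oriented interval from $x_{i^-}$ to $x_i$ on the same side of the origin (with $x_0=0$). I would require on $J_i$ that the reflection of $f$ from its past minimum stays below $h_i-\gd$ except in an $\eps$-window around $x_i$, where it crosses $h_i$ for the first time and $f$ attains a new minimum at some point of $[x_i-\eps,x_i+\eps]$ of depth in $[h_i,h_i+\gd]$. If $i\in\C{I}\sm\C{I}_\infty$, require in addition that $\ul f \ge -(h_N+\gd)$ on $J_i$, so that no opposite-side well has yet been engulfed; for $i\in\C{I}_\infty$, drop this condition. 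Past the extreme $x_i$ on each side, impose a terminal sealing event forcing $f$ to rise to height $\ge h_N+1$, guaranteeing that the wells are stable on $(h_N,2h_N]$ and $x_f$ is constant there.

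\textbf{Probability estimate.} Applying the strong Markov property at the successive first-passage times to the points $Mx_i$, and using the independence of Brownian motion on the two half-lines, $\PP(B(M\cdot)\in\Rhxde)$ factors into a product over $i\in\C{I}$. By Lemma \ref{confinementCost}, the factor for $i\in\C{I}\sm\C{I}_\infty$ is
\[
\exp\!\bigl(-M\tfrac{\pi^2}{2}|x_i-x_{i^-}|/h_i^2 + M\cdot o_{\BS h,\BS x}(\gd,\eps)\bigr),
\]
and the factor for $i\in\C{I}_\infty$ is the same expression with $\tfrac{\pi^2}{2}$ replaced by $\tfrac{\pi^2}{8}$; the descent-window and terminal sealing events contribute bounded positive prefactors depending only on $(\BS h,\BS x,\gd,\eps)$. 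Taking logarithms, summing, and dividing by $M$ gives $-I(\BS h,\BS x)+O_{\BS h,\BS x}(\gd,\eps)$, establishing \eqref{RsetCost}.

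\textbf{Topological verification and main obstacle.} For $f\in\Rhxde$, the descent-window condition forces $x_f$ to jump at a height within $\gd$ of $h_i$ to a position within $\eps$ of $x_i$; the two-sided confinement for $i\notin\C{I}_\infty$ together with the index ordering on $\C{S}$ excludes spurious well-merges across the axes; and the terminal sealing pins $x_f$ on $(h_N,2h_N]$. Hence $x_f$ lies in any prescribed $[0,2h_N]$-Skorokhod neighborhood of $\phx$ once $\gd,\eps$ are small enough. The main technical obstacle is engineering $\Rhxde$ so that simultaneously (a) it admits a clean Markov factorisation on the intervals $J_i$, (b) each factor realises exactly the one-sided versus two-sided Brownian confinement cost responsible for the $\tfrac{\pi^2}{8}$/$\tfrac{\pi^2}{2}$ dichotomy in $I(\BS h,\BS x)$, and (c) no well on the ``wrong'' axis gets prematurely engulfed at heights just below the critical $h_i$. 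Reconciling these three requirements --- in particular the transition into and out of the terminal same-sign run $\C{I}_\infty$, where one loses the opposite-side confinement but gains the cheaper cost --- is the delicate heart of the argument.
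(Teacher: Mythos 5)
Your overall strategy mirrors the paper's: build $\Rhxde$ as an intersection of interval-by-interval confinement events, factor its probability via the Markov property, and recover the $\pi^2/2$ versus $\pi^2/8$ dichotomy by imposing or dropping a floor constraint on $\ul{B}$. The differences are in the details, and two of them are genuine gaps rather than cosmetic.

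First, for $i\in\C{I}\setminus\C{I}_\infty$ you quote Lemma \ref{confinementCost}(c) to get the block cost $\tfrac{\pi^2}{2}|x_i-x_{i^-}|/h_i^2$. But (c) is stated only for $\PP_0$, i.e.\ for starting state $(\ol{R},\ul{B})=(0,0)$. The Markov factorization requires a \emph{uniform lower bound} on the block probability over the entire range of conditioning values $(\ol{R},\ul{B})(Mx_{i^-})$ that the previous blocks allow, and Lemma \ref{confinementCost} does not provide that for part (c). The paper sidesteps this entirely by never using (c) in the lower-bound construction: the Confinement block $C(x,y,h)$ pins $f$ itself in $[-\eps^2 h,h]$, a strictly stronger constraint that automatically supplies the floor, whose cost is $\tfrac{\pi^2}{2}/h^2$ by part (a) \emph{uniformly} over starting points in a compact sub-interval, and the bookkeeping of these conditional rates is done in a separate Lemma \ref{blocksCosts}. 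Without something of that kind, your product estimate is not justified.

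Second, you flag the entry into $\C{I}_\infty$ as ``the delicate heart of the argument'' but do not actually resolve it. The paper's resolution has two non-obvious ingredients: for $i\in\C{I}_\infty$ it reflects $f$ from the fresh base point $w_q(1+\gd)$ rather than from the origin, and it additionally imposes the cap \eqref{res2}, $f(\cdot)-f(w_i(1+\gd))\le\eps^2$, precisely because the fresh reflection no longer forces $f<h_i$. Your alternative --- keeping $R_0$ and simply dropping the floor --- would indeed force $f<h_i$ for free (since $\ul{B}\le 0$), so one might hope to avoid \eqref{res2}; but then you must still check that the conditioning value $R_0(Mx_{q^-})$ inherited from the last two-sided block actually lies below the ceiling $(1-\eps_1)h_q$ required by Lemma \ref{confinementCost}(b), that the unchecked drift of $\ul{B}$ on the one-sided blocks does not create a deeper well on the positive axis that hijacks $x_f$, and that the resulting step process is Skorokhod-close to $\phx$ up to height $2h_N$. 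The paper verifies the last point by an explicit computation (the estimates \eqref{hvariation}, \eqref{xvariation}); your proposal asserts it. Until these checks are carried out, the possibility remains that the construction either has probability zero at the $\C{I}_\infty$ transition or produces a process of wells that is not close to $\phx$.

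Finally, a minor discrepancy: your terminal sealing barrier has height $h_N+1$, whereas the claim concerns a $[0,2h_N]$-Skorokhod neighborhood, which is why the paper seals at height $h_\infty=2h_N$; $h_N+1$ is not enough when $h_N>1$.
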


The rest of this section contains the proof of the proposition.
First, we construct the desired set of paths, then we show that
they can be arbitrarily close to $\phx$, and finally we prove the
desired probability decay.

Each path in $\Rhxde$ forms a ``vessel,'' in which when
water is poured from the $y$ axis, the process of wells is
close to $\phx$ almost until depth $2h_N$ is reached.

\subsection*{Construction}

We define the following events, that is, sets of continuous
functions $f\dvtx \D{R}\to\D{R}$,
for $\eps, \gd\in(0, 1), h>0$ and $x, y\in\R$ with $0\le x<y$ or
$y<x\le0$.
For all events, we require that, in the second endpoint of the interval
mentioned, $f$ takes a value in $[0,h-\eps h]$; this is to make the
building blocks fit
together well. In addition, we require:\vadjust{\goodbreak}
\begin{enumerate}
\item[\textit{Confinement,
$C(x,y,h)$}:] between times $x(1+\delta),y(1-\delta)$, $f$ stays
in $[-\eps^2 h,h]$.
\item[\textit{Hole,
$H(y,h)$}:] between times $y(1-\delta), y$, $f$ stays in
$[-\eps h,h]$, visits below $-\eps h+\eps^2 h$.
\item[\textit{Hole$^R$,
$H^R(y,h)$}:] between times $y(1-\delta),y$, $f$ stays in
$[0,h]$, visits $0$.
\item[\textit{Barrier,
$B(y,h)$}:] between times $y,y(1+\delta)$, $f$ stays in
$[-\eps^2 h,h+\eps h]$, visits above $h$.
\end{enumerate}

We omit the dependence on $\eps, \gd$ from the notation.

Our basic restriction set, $\Rhxde$, is defined as the
intersection of the following sets $E_i$, for
$i\in\C{I}\cup\{0\}$. Our goal is to ensure that functions $f$ in
these sets will have the property that $x_f$ is
$[0,2h_N]$-Skorokhod-close to $\phx$. We will comment on the
importance of the individual sets $E_i$ after their definition.

\subsubsection*{The beginning}
%
\begin{eqnarray}
\label{start}
E_0&:=&C\bigl(0, x_\alpha\gd/(1-\gd), h_\alpha\bigr)\cap B(x_\alpha\gd,
h_\alpha)\nonumber
\\[-8pt]
\\[-8pt]
 &&{}\cap C\bigl(0, x_\beta\gd/(1-\gd), h_\beta\bigr)\cap
B(x_\beta\gd, h_\beta).
\nonumber
\end{eqnarray}
For $f\in E_0$, we have
\[
x_f(h)\in\bigl(\gd(1+\gd)x_\beta, \gd(1+\gd)x_\alpha\bigr)    \qquad \mbox{for }  h\in[0, h_1],
\]
because the two barrier sets create a well around
zero of depth at least $h_1$.

\subsubsection*{The indices in $\C{I}\setminus\C{I}_\infty$}
For each index $i\in\C{I}\setminus\C{I}_\infty$, we
define the set
\[
E_i:=C(w_i, x_i, h_i)\cap H(x_i, h_i)\cap B(x_i, h_{i^+}),
\]
where for all $i\in\C{I}$, we let
\[
w_i:=
\cases{\displaystyle x_{i^-}, &\quad $i\ne\alpha, \beta$,\cr\displaystyle
x_i\delta,&\quad $i=\alpha$  or   $\beta$.
}
\]
The purpose of $E_i$ is to guarantee that for $f\in E_0 \cap\cdots \cap E_i$, the value of $x_f( \cdot )$ will be near
$x_i$ for a time interval very close to $[h_i, h_{i+1}]$.

\begin{figure}

\includegraphics{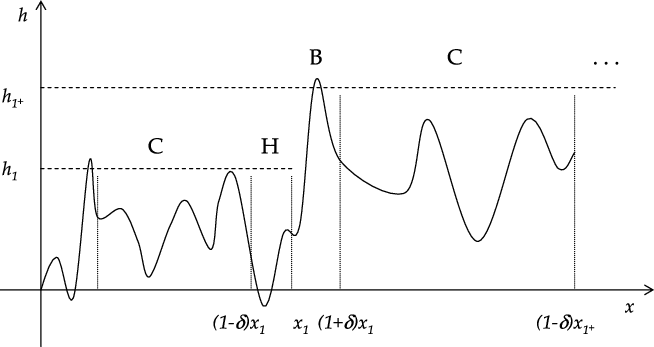}

\caption{The first positive blocks of the construction.} \label{FirstBlocks}
\end{figure}

More precisely, assume that $x_i>0$, and focus on the positive
half of the path $f|[0, \infty)$. See Figure~\ref{FirstBlocks} for
the case $i=1$. What $E_i$ adds to the
intersection is that $f|[0, \infty)$ up to the point $x_i(1-\gd)$
does not reach a new minimum or maximum. Then it creates a new
minimum (hole) near $x_i$, and then a barrier of height $h_{i^+}$
ahead of it.
But $E_{i^-}$ has already created a barrier of height about
$h_i$.
Moreover, on the negative side there is a barrier of height at
least $h_{i+1}$ following a minimum for $f|(-\infty, 0]$ which is
not deeper than the hole in the event $E_i$. So indeed, the value
of $x_f( \cdot )$ will be near $x_i$ at least for a time interval almost
equal to $[h_i, h_{i+1}]$.

Note also that the barrier created by $E_i$ makes sure that from height
about $h_i$ until
height about $h_{i^+}$, the process $x_f$ either stays constant or
jumps to negative values; that is, it does not advance to another
positive value.

\subsubsection*{The indices in $\C{I}_\infty$}
By symmetry, we may assume that the $x_i$'s for $i\in
\C{I}_\infty$ are positive. For a locally bounded function $f$
defined in $\D{R}$, and $z$ fixed, let
$R_zf\dvtx [z,\infty)\to[0,\infty)$ denote $f$ reflected from its
running minimum after $z$, namely
%
\begin{equation}\label{reflectionFromInf}
R_zf(x):=f(x)-\inf_{s\in[z, x]} f(s).
\end{equation}
Let $q=\min\C{I}_\infty$. For $i\in\C{I}_\infty$, define the
set $E_i$ of paths $f$ so that $R_{w_q(1+\gd)}f$ is in
\[
C(w_i, x_i, h_i)\cap H^R(x_i, h_i)\cap B(x_i, h_{i^+}),
\]
and $f$ satisfies
%
\begin{equation}\label{res2}
f(x)- f\bigl(w_i(1+\gd)\bigr) \le\eps^2
 \qquad \mbox{for } x\in[w_i(1+\gd), x_i(1+\gd)].
\end{equation}
Note that $i^+=i+1$ unless $i=N$.

In order to understand these events $E_i$, we first
consider the effect of the preceding events $E_0\cap\cdots \cap E_{q-1}$. See Figure~\ref{lastpic}.

Assume first that $\C{I}_\infty\not=\C{I}$. In this case, $E_0\cap\cdots \cap E_{q-1}$ puts restriction on the path on the interval
$[x_{q-1}(1+\gd), w_q(1+\gd)]$. The minimum value of $f$ there is
negative of order $\eps$, the maximum is attained in the interval
$[x_{q-1}(1+\gd), x_{q-1}]$, where the path goes over $2h_N$ because
$(q-1)^+=\infty$ and $h_\infty=2h_N$.

When $\C{I}_\infty=\C{I}$, the event $E_0$ puts restriction on
the path on the interval $[-x_1\gd(1+\gd), w_q(1+\gd)]$. The
minimum value of $f$ there is negative of order~$\eps$, the
maximum is attained on $[-x_1\gd(1+\gd), -x_1\gd]$, where the path
goes over~$2h_N$.

In both cases, the maximum on $[0, w_q(1+\gd)]$ is attained in the
interval $[w_q, w_q(1+\gd)]$, where the path goes a bit over $h_q$
and ends up in $[0, h_q-\eps h_q]$.\vadjust{\goodbreak}

\begin{figure}

\includegraphics{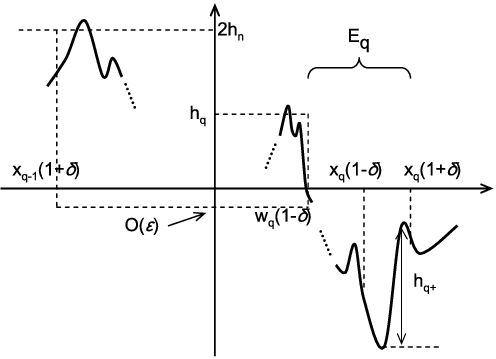}

\caption{The last part.} \label{lastpic}
\end{figure}

Then the set $E_q$ requires from $f$ up to the point
$x_q(1-\gd)$ not to create an ascent of height larger than
$h_q$ (see Figure~\ref{lastpic}) and then to create one
with lowest point having $x$-coordinate in $(x_q(1-\gd),
x_q(1+\gd))$ and height around $h_{q^+}$. The goal of
\eqref{res2} is to force $f$ not to go above $h_q$, and
this is obtained because as we noted $f(w_q(1+\delta))\le
h_q-\eps h_q$, and from that point on $f$ stays below
$h_q-\eps h_q+\eps^2$. These, together with the barrier of
height $2h_N$ on the negative axis, guarantee that $x_f(h)$
is around $x_q$ at least for $h$ in an interval very close
to $[h_q, h_{q^+}]$.

The other $E_i$'s with $i\in\C{I}_\infty$ work in the same way.

\subsection*{The behavior of $x_f$ for $f\in\Rhxde$}
We will now examine more precisely how $x_f$ behaves when $f\in
\mathcal R(\BS{h}, \BS{x},\gd, \eps)$. For $x,y\in\mathbb R$,
define
\[
f^\#(x,y)=
\cases{\displaystyle\sup\{f(t)-f(s)\dvtx x\le s\le t\le y\} ,&\quad $x \le
y$,\cr\displaystyle
\sup\{f(t)-f(s)\dvtx y\le t\le s\le x\} ,&\quad $x \ge y$.
}
\]
Also let
\[
\tilde
h_1:=\max\{f(x)\dvtx  x \mbox{ between 0 and } x_1 \gd(1+\gd)\},
\]
and
let $z_1$ be the closest to zero point between $0$ and
$x_{\alpha\vee\beta} \delta(1+\gd)$ where $f$ takes the value
$\tilde h_1$. In the interval between $z_1$ and $x_1\gd(1+\gd)$,
we have a well of depth around $h_1$. Its exact depth is
\[
v_1:=\tilde h_1-\min\{f(x)\dvtx x\mbox{ between $z_1$ and } x_1\gd(1+\gd
)\}.
\]
Also define
\[
v_i:=
\cases{\displaystyle
f^\#\bigl(x_{i-1}, x_i\gd(1+\gd)\bigr) ,&\quad   if   $i=\alpha\vee\beta\ne
\infty $,\cr\displaystyle
f^\#\bigl(x_{i-1}(1-\gd), x_{i-1}(1+\gd)\bigr) ,&\quad   if   $i-1= i^-\in\C
{I}\setminus\C{I}_\infty $,\cr\displaystyle
f^\#\bigl(x_{i-1}, x_{i^-}(1+\gd)\bigr) ,&\quad   if   $i-1\ne i^-\in\C
{I}\setminus\C{I}_\infty$.\vadjust{\goodbreak}
}
\]
Now for $i^-\in\C{I}_\infty$ let
\[
v_i:=\sup\bigl\{R_{w_q(1+\gd)}f(x)\dvtx  x\in[x_{i^-}, x_{i^-}(1+\gd)]\bigr\},
\]
and finally let $v_{N+1}= 2h_N$. From the above discussion and the
definition of $\mathcal R(\BS{h}, \BS{x},\gd, \eps)$, we conclude
that
%
\begin{equation}\label{hvariation}
v_i\in
\cases{\displaystyle
[h_1, h_1+\eps(h_1+\eps h_{\alpha\vee\beta})] ,&\quad   if   $i=1 $,\cr\displaystyle
[h_i+(\eps-\eps^2)h_{i-1}, h_i+\eps(h_i+h_{i-1})]
,&\quad   if   $i=\alpha\vee\beta\ne\infty$\cr
& \qquad   or   $i^-\in\C
{I}\setminus\C{I}_\infty $,\cr\displaystyle
(h_i, h_i+\eps h_i] ,&\quad   if   $i^-\in\C{I}_\infty$,
}
\end{equation}
and
%
\begin{eqnarray}\label{xvariation}
&\displaystyle |x_f(h)|\le\gd(\gd+1) (x_\alpha\vee|x_\beta|)  \qquad \mbox{for } h\in
[0, v_1],& \nonumber
\\[-8pt]
\\[-8pt]
&\displaystyle x_f(h)\mbox{ is between }x_i(1-\gd), x_i  \qquad \mbox{for }
h\in(v_i, v_{i+1}],    i\in\C{I}.&
\nonumber
\end{eqnarray}
We assumed that $\eps$ is small enough so that $v_1<v_2<\cdots <v_N$,
and $-\eps h_i+\eps^2h_i<-\eps h_{i-1}$ for $i\in\C{I}\setminus\C
{I}_\infty$. Informally, the second requirement guarantees that, for
these $i$'s, the set $H(x_i, h_i)$ creates a new, deeper minimum, and
this is used for \eqref{xvariation}.

Relations \eqref{hvariation}, \eqref{xvariation} show that
any $[0,2 h_N]$-Skorokhod neighborhood of
$\Phi_{\mathbf{h}, \mathbf{x}}$ contains $\{x_f\dvtx f\in\mathcal
R(\mathbf{h}, \mathbf{x}, \gd, \eps)\}$ if $\eps$, $\delta$ are
small enough.

\subsection*{The asymptotic probability of $\Rhxde$}
It remains to prove \eqref{RsetCost}.

We apply the Markov property and use Lemma~\ref{blocksCosts}. Note
that for any two restriction sets concerning contiguous intervals,
say $[x, y], [y, z]$, with $0<x<y<z$, the allowed values for
$f(y-)$ are the same as the ones on which we condition in the
first three relations of Lemma~\ref{blocksCosts}. If the second
set corresponds to an index in $\C{I}_\infty$, the ending point
of the first block is irrelevant.

It is also important that the limits computed in that lemma are uniform
over the starting points of the processes involved. These observations
allow us to conclude that the left-hand side of \eqref{RsetCost} equals
\begin{eqnarray*}
&&-\frac{\pi^2}{2}\sum_{i=\alpha, \beta}\frac{|x_i|\gd}{h_i^2}
 \biggl(\frac{1}{(1+\eps^2)^2}+ \frac{\gd}{(1+\eps+\eps
^2)^2} \biggr)
\\
&& \qquad {}-\frac{\pi^2}{2}\sum_{i\in\C{I}\setminus\C{I}_\infty}
\biggl(\frac{|x_i-w_i-\gd(x_i+w_i)|}{h_i^2(1+\eps^2)^2}+\frac{\gd
|x_i|}{h_i^2(1+\eps)^2}+
\frac{\gd|x_i|}{h_{i^+}^2(1+\eps+\eps^2)^2}  \biggr) \\
&& \qquad {}-\frac{\pi^2}{8}\sum_{i\in\C{I}_\infty}  \biggl(\frac
{|x_i-w_i-\gd w_i|}{h_i^2}+ \frac{\gd|x_i|}{h_{i^+}^2(1+\eps
)^2} \biggr),
\end{eqnarray*}
which is $-I(\BS{h}, \BS{x})+O_{\BS{h}, \BS{x}}(\gd,
\eps)$. Note that for $\gd\searrow0$, only the confinement
sets appearing in $E_i$, for $i\in\C{I}$, contribute to
the rate of decay. The reason is that all other sets put
restrictions on intervals of size proportional to $\gd$.
Similarly, the first restriction set $E_0$ does not
contribute.

\section{The large deviation principle for the process of wells}\label{sec5}\label
{LDPSection}
In this section, we complete the proof of the large
deviation principle for the family $\{m (x_B/M )\dvtx
M>0\}$ stated in Theorem~\ref{bLDP}.

Recall that a family $\{\mu_M\dvtx  M>0\}$ of Borel measures on
a topological space $\mathcal M$ satisfies the \textit{large
deviation principle} with rate $I\dvtx \mathcal M \to
[0,\infty]$ as $M\to\infty$ if for every measurable set
$\mathcal B\subset\mathcal M$ and every $A>\inf_{\mathcal
B^{\circ}} I$ and $A'<\inf_{\bar{\mathcal B}} I$ we have
%
\begin{equation}\label{LDPrecall}
-A\le\frac{\log\mu_M(\mathcal B)}{M} \le-A'
\end{equation}
for all sufficiently large $M$, where ${\mathcal B}^\circ,
\bar{\mathcal B}$ denote the interior and the closure of
$\mathcal B$, respectively.

Recall also that $I$ is a \textit{good rate function} if
$I^{-1}[0,A]$ is compact for all finite $A$. In
particular, these sets are closed, which is equivalent to
$I$ being lower semicontinuous.

We have established in Propositions~\ref{PropdensityBelow}, \ref
{PropLowerLDP} the core upper and lower bounds. Next,
we prove exponential tightness. Recall that a family of
measures $\{\mu_M\dvtx  M>0\}$ as above is \textit{exponentially tight} as
$M\to\infty$ if
for every $A>0$ there is a compact set $\mathcal Q\subset\mathcal{M}$
so that
$\mu_M(\mathcal Q^c)<e^{-AM}$ for all large enough~$M$.

\begin{lemma}\label{exptightness}
The family $\{m (x_B/M )\dvtx
M>0\}$ is exponentially tight.
\end{lemma}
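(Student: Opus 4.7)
The plan is to exhibit, for each $A>0$, a compact set $\C{Q}_\phi\subset\C{M}$ of the form
$$
\C{Q}_\phi := \{\mu\in\C{M}:\ f_\mu(h)\le\phi(h),\ g_\mu(h)\le\phi(h)\text{ for all }h>0\},
$$
where $\phi:(0,\infty)\to(0,\infty)$ is nondecreasing with $\phi(h)\to\infty$, and to show $\PP(m(x_B/M)\notin\C{Q}_\phi)\le e^{-AM}$ for all sufficiently large $M$. Compactness of $\C{Q}_\phi$ in the local weak topology is a standard consequence of Helly's selection theorem applied to the uniformly $\phi$-bounded monotone functions $f_{\mu_n}, g_{\mu_n}$: every $\mu\in\C{M}$ projects to Lebesgue measure on $\R^+$, which yields vague precompactness, and the pointwise subsequential limits $f,g$ of $f_{\mu_n},g_{\mu_n}$ produce a vague limit in $\C{M}\cap\C{Q}_\phi$ with $f_\mu\le f\le\phi$ and $g_\mu\le g\le\phi$.

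To bound the escape probability, I would first use the identity $Mf_{m(x_B/M)}=\ol{x_B}$ (and its reflected analog for $g$) from the proof of Lemma \ref{probabLemma} to rewrite $\{m(x_B/M)\notin\C{Q}_\phi\}$ as $\{\sup_{0\le s\le h}|x_B(s)|>M\phi(h)\text{ for some }h>0\}$. Both sides being nondecreasing in $h$, it suffices to verify the inequality at the dyadic levels $h=2^n$, $n\in\Z$, at the cost of a constant factor absorbed in $\phi$. Then the nesting property of wells --- if $s\le h$, then $x_B(s)$ lies in the minimal well of depth $\ge h$ containing $0$ --- bounds $\sup_{s\le 2^n}|x_B(s)|$ by the radius $W_n$ of that well. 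Combined with the scaling $x_B(2^n \cdot)\overset{\C{L}}{=}4^n x_B(\cdot)$, this gives $W_n\overset{\C{L}}{=}4^n W_0$.

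The key probabilistic input is the exponential tail $\PP(W_0>z)\le Ce^{-cz}$ for some $c>0$, which follows by translating $\{W_0>z\}$ into a small-oscillation event for $B$ on an interval of length $z$ (no ascent of height $\ge 1$ can form in $[0,z]$ or in $[-z,0]$) and invoking the classical confinement estimate from Section \ref{BrownianComputations}. Thus $\PP(W_n>M\phi(2^n))\le C\exp(-cM\phi(2^n)/4^n)$. Choosing $\phi(h):=K_A\max(1,h^2\log(e+h))$ for $K_A$ large enough depending on $A$ and $c$, the resulting bounds $C\exp(-cK_AM(1+n_+\log 2))$ for $n\ge 0$ and the faster-decaying bounds for $n<0$ sum geometrically over $n\in\Z$ to at most $e^{-AM}$ for all sufficiently large $M$. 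The principal obstacle is the tail bound on $W_0$: translating the well-radius condition into a Brownian oscillation event and invoking the standard $e^{-cz}$ decay is the one place that requires care. The logarithmic factor in $\phi$ is needed to pay for the dyadic union bound; with these two ingredients in hand, everything else is routine bookkeeping.
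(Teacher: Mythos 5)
Your proof is correct and follows essentially the same route as the paper's: a compact set cut out by a pointwise growth bound on $f_\mu, g_\mu$ (the paper uses $a k^3$ on integer blocks rather than $K_A h^2\log(e+h)$ on dyadic blocks, but this is cosmetic), reduction via Brownian scaling to a unit-scale tail for the running extrema of $x_B$, and the same key confinement estimate $\PP(B[0,z]\subset(-1,1))\le Ce^{-\pi^2 z/8}$. The paper phrases the tail directly through $\ol{x_B}(1)\ge z\Rightarrow\ol{R}(z)<1$ rather than through the well radius $W_0$, and asserts compactness of the analogous set without the Helly argument you supply, but these are minor presentational differences.
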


\begin{pf}
By the definition of $\mathcal M$, for every $a>0$, the set
%
\begin{equation}
\label{Qeq}
\mathcal Q_a:= \Biggl\{\mu\in\C{M}\dvtx  \supp(\mu)\subset
\bigcup_{k=1}^\infty[k-1, k]\times[-ak^3, ak^3] \Biggr\}
\end{equation}
is compact in $\C{M}$.
Recall definitions \eqref{RBM} and \eqref{runningMax}. We
have
%
\begin{eqnarray}
\label{tightnessSet} \PP\bigl(m (x_B/M)\in
\mathcal Q_a^c \bigr)&\le&\sum_{k=1}^\infty\PP\bigl(\max
\{\ol{x_B}(k), \ol{(x_B)^-}(k)\}\ge aM k^3\bigr)\nonumber
\\[-8pt]
\\[-8pt]&\le& 2
\sum_{k=1}^\infty\PP\bigl(\ol{x_B}(1)\ge aM k\bigr)
\nonumber
\end{eqnarray}
using the scaling and symmetry properties of $x_B$. But for
all $x>0$, we have that $\ol{x_B}(1)\ge x$ implies $\bar R(x)<1$,
which has probability at most
\[
\PP\bigl(B[0, x]\subset(-1,
1)\bigr)\le C e^{-x\pi^2/8}
\]
with $C$ a constant. Here we used the fact that $R$ has
the same law as $|B|$ and relation \eqref{PortStoneIneqU}. Consequently,
%
\begin{equation}\label{Qconclusion}
\PP\bigl(m (x_B/M)\in\mathcal Q_a^c \bigr)\le
C'e^{-Ma\pi^2/8}
\end{equation}
for a constant $C'$. Since $a$ was arbitrary, exponential tightness follows.\vadjust{\goodbreak}
\end{pf}

\begin{pf*}{Proof of Theorem~\ref{bLDP}, the large deviation principle}
The first inequality in \eqref{LDPrecall} is a
reformulation of Proposition~\ref{PropLowerLDP} applied to
the open set $\mathcal B^\circ$. For the second, let
$A<\inf_{\bar{\mathcal B}} I$. By exponential tightness
(Lemma~\ref{exptightness}) there exists a compact set
$\mathcal Q$ so that
\[
P\bigl(m(x_B/M)\in\mathcal Q^c\bigr) \le e^{-AM}
\]
for all sufficiently large $M$. Each point in $\mathcal
Q\cap\bar{\mathcal B}$ can be covered with an open set
satisfying the same asymptotic bound by Proposition
\ref{PropdensityBelow}. To get the second inequality of
\eqref{LDPrecall}, take a finite subcover of the compact
set $\mathcal Q\cap\bar{\mathcal B}$, and use the union
bound.

Finally, we show that $I$ is a good rate function.
Take $A>0$ and $\mu\in I^{-1}(A,\infty]$. By Proposition \ref
{PropdensityBelow},
$\mu$ has an open neighborhood $\mathcal U$ so that
\[
-A > \limsup_{M\to\infty} \frac{\log P(m(x_B/M)\in\mathcal
U)}{M}.
\]
The right-hand side is bounded below by $-\inf_{\mathcal U}
I$ because of the large deviation lower bound. Thus $\mathcal
U\subset I^{-1}(A,\infty]$, which shows that the latter set must be open.
Thus $I^{-1}[0,A]$ is closed. On the other hand,
exponential tightness and the large deviation lower bound
gives $\inf_{\mathcal Q^c} I
>A$ for some compact $\mathcal Q$, so $I^{-1}[0,A]\subset\mathcal Q$
must also be compact.
\end{pf*}

\section{The limit points of the environment}\label{sec6} \label{EnvLIL}

For $a>1$, and $B\dvtx \D{R}\to\D{R}$ a continuous path, we define the
function $Z_a\dvtx [0, \infty)\to\D{R}$ by
%
\begin{equation} \label{zPath}
Z_a(s):= \frac{x_B(sa)}{a^2\log\log a}
\end{equation}
for all $s\ge0$. We will determine the limit points of the
family of measures $(m(Z_a))_{a>e}$, as $a\to\infty$, with respect to
the topology
of local weak convergence, when $B$ is a two-sided Brownian path. Then
Theorem~\ref{FunLIL} will follow from localization results connecting
$x_B$ with the Sinai walk. We
start by doing this along geometric sequences.

\subsection*{Geometric sequences}
In the next proposition, we show that all limit points of
$(m(Z_a))_{a>e}$ along geometric sequences fall into a
certain set $\C{K}$. Then Proposition~\ref{geomSequences2}
shows that in fact along any geometric sequence, all points
of $\C{K}$ are limit points.

\begin{proposition} \label{geomSequences1}
If $c>1$, then with probability one, every subsequence of
$\{m(Z_{c^n})\dvtx  n> 1/\log c \}$ has a further convergent subsequence, and
the limit points of the original sequence are contained in the set
\[
\C{K}:=\{\mu\in\C{M}\dvtx  I(\mu)\le1\}.
\]
\end{proposition}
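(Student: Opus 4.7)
By the scaling relation \eqref{xBScaling}, for each fixed $a$ the rescaled graph measure $m(Z_a)$ has the same law as $m(x_B/\log\log a)$. Writing $M_n := \log\log c^n = \log n + \log\log c$, this identity gives $m(Z_{c^n}) \overset{\C{L}}{=} m(x_B/M_n)$ for every $n>1/\log c$, so Theorem~\ref{bLDP} applied with speed $M_n\sim \log n$ translates all large deviation bounds into polynomial probability decay in $n$. Combined with the first Borel--Cantelli lemma applied to the events $\{m(Z_{c^n})\in E\}$ for suitable $E$, this will yield both conclusions almost surely.

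For relative compactness, apply the exponential tightness bound \eqref{Qconclusion} with parameter $a$ chosen so that $a\pi^2/8>1$. This gives
\[
\PP(m(Z_{c^n})\in \C{Q}_a^c) \le C'e^{-M_n a\pi^2/8} \le C''\, n^{-a\pi^2/8},
\]
which is summable in $n$. Borel--Cantelli then ensures that almost surely $m(Z_{c^n})\in \C{Q}_a$ for all sufficiently large $n$, and since $\C{Q}_a$ is compact, every subsequence of the tail admits a convergent further subsequence.

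For the containment of limit points in $\C{K}$, I will use the metrizability of the local weak topology on $\C{M}$. Letting $d$ be a compatible metric, set $\C{U}_k := \{\mu : d(\mu,\C{K})<1/k\}$; these are open neighborhoods of the compact set $\C{K}$ and satisfy $\bigcap_k \overline{\C{U}_k} = \C{K}$. For each $k$ the closed set $F_k := \C{M}\setminus \C{U}_k$ is disjoint from $\C{K}=I^{-1}[0,1]$, and goodness plus lower semicontinuity of $I$ force $\inf_{F_k} I > 1$: indeed, pick any $A'>1$; on the compact sublevel set $I^{-1}[0,A']\cap F_k$ the infimum is attained and exceeds $1$, while on the complement $I>A'>1$. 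Choose $A_k \in (1,\inf_{F_k} I)$; the large deviation upper bound from Theorem~\ref{bLDP} applied to the closed set $F_k$ yields
\[
\PP(m(Z_{c^n})\in F_k) \le e^{-A_k M_n} \le C\, n^{-A_k}
\]
for all large $n$, which is again summable. Borel--Cantelli therefore provides, for each $k$, an almost sure event on which eventually $m(Z_{c^n})\in \C{U}_k$; intersecting this countable collection of full-measure events gives a single almost sure event on which, for every $k$, eventually $m(Z_{c^n})\in \C{U}_k$. On this event, any limit point $\mu$ of $(m(Z_{c^n}))$ must lie in $\overline{\C{U}_k}$ for every $k$, hence in $\bigcap_k \overline{\C{U}_k} = \C{K}$.

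The main potential obstacle is the reduction to countably many neighborhoods: the pointwise upper bound of Proposition~\ref{PropdensityBelow} cannot be applied simultaneously to uncountably many sets, so metrizability of $\C{M}$ and the choice of a countable neighborhood basis of $\C{K}$ are essential. Both ingredients, together with the goodness of $I$ needed to guarantee $\inf_{F_k} I > 1$, are by-products of the proof of Theorem~\ref{bLDP}.
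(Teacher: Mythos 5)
Your proof is correct. The compactness half is exactly the paper's argument: scaling plus the exponential tightness estimate \eqref{Qconclusion} with $a>8/\pi^2$, Borel--Cantelli, and compactness of $\mathcal Q_a$. For the containment of limit points you take a mildly different route. The paper does not pass through the full LDP upper bound for closed sets: it applies the local upper bound (Proposition \ref{PropdensityBelow}) at each point of $\mathcal K^c$, refines the resulting neighborhoods by a fixed countable base of the topology, and concludes by Borel--Cantelli that a.s.\ no basis element meeting the bound contains a limit point, so all limit points lie in $\mathcal K$. You instead invoke the already-proved Theorem \ref{bLDP}: you take shrinking metric neighborhoods $\mathcal U_k$ of $\mathcal K$, use goodness and lower semicontinuity of $I$ to get the uniform gap $\inf_{F_k} I>1$ on the closed complements $F_k$, apply the LDP upper bound to $F_k$, and intersect countably many Borel--Cantelli events; the identity $\bigcap_k \overline{\mathcal U_k}=\mathcal K$ (valid since $\mathcal K$ is closed by goodness) finishes the argument. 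Since Theorem \ref{bLDP} is established before this proposition, this is legitimate, and the gap argument $\inf_{F_k}I>1$ via the compact sublevel set $I^{-1}[0,A']\cap F_k$ is sound. What the two versions buy: the paper's argument needs only the local upper bound and second countability (no metric, no goodness of $I$ at this stage), while yours is the standard Strassen-type reduction from a full LDP with good rate function, at the cost of explicitly assuming metrizability of the local weak topology on $\mathcal M$ -- an assumption consistent with, though slightly stronger than, the countable base the paper uses.
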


\begin{pf}
By the scaling property of $x_B$ and \eqref{Qconclusion},
\[
\PP\bigl(m(Z_{c^n}) \in\mathcal Q_a^c \bigr)=\PP\bigl(m(x_B/\log\log c^n)\in
\mathcal Q_a^c\bigr)\le
C' \exp \biggl(- \frac{a\pi^2}8 \log\log c^n \biggr).\vadjust{\goodbreak}
\]
For $a>8/\pi^2$, the first Borel--Cantelli lemma implies that
$m(Z_{c^n}) \in\mathcal Q_a$ eventually, and the first claim follows
by the compactness of $\mathcal Q_a$.

For each point in $\mathcal K^c$, Proposition
\ref{PropdensityBelow} provides an open set $\mathcal U$
containing it so that for some $A(\mathcal U)>1$ and for
all large enough $n$, we have
%
\begin{equation}\label{AU}
\PP\bigl(m(Z_{c^n})\in\mathcal{U}\bigr)\le\exp\bigl(-A(\mathcal U)
\log\log c^n\bigr).
\end{equation}
Now each such set $\mathcal U$ can
be written as a union of elements of a fixed countable
base. Thus $K^c$ can be covered with a countable collection
of open sets $\mathcal U_k$ satisfying \eqref{AU}. By the
first Borel--Cantelli Lemma and the union bound, no
$\mathcal U_k$ contains a limit point a.s.
\end{pf}

The promised complement of Proposition~\ref{geomSequences1}
is as follows.

\begin{proposition} \label{geomSequences2}
If $c>1$, then with probability one, the limit points of
$\{m(Z_{c^n})\dvtx  n>1/\log c\}$ include the points of the set
\[
\C{K}:=\{\mu\in\C{M}\dvtx  I(\mu)\le1\}.
\]
\end{proposition}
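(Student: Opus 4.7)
The plan is to complement Proposition \ref{geomSequences1} via a conditional second Borel--Cantelli argument, built on the large deviation lower bound of Proposition \ref{PropLowerLDP} and a decoupling of $B$ on disjoint annular regions.

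First, I would reduce to a countable dense subset. The set of limit points of any sequence is automatically closed, and the rescaling $\mu\mapsto\mu_\gl$ with $(f_{\mu_\gl},g_{\mu_\gl})=\gl(f_\mu,g_\mu)$ satisfies $I(\mu_\gl)=\gl I(\mu)$ and $\mu_\gl\to\mu$ in the local weak topology as $\gl\nearrow 1$, so $\{I<1\}$ is dense in $\C{K}$. Fixing a countable base $\{\C{U}_j\}_{j\ge 1}$ for the topology on $\C{M}$, it suffices to show that for each $\mu$ with $I(\mu)<1$ and each base neighborhood $\C{U}\ni\mu$,
$$
\PP\bigl(m(Z_{c^n})\in\C{U}\text{ for infinitely many }n\bigr)=1.
$$

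Second, I would obtain a divergent sum. By the scaling \eqref{xBScaling} with $a=c^n$, $m(Z_{c^n})\overset{\C{L}}{=}m(x_B/M_n)$ where $M_n=\log\log c^n$. Picking $A\in(I(\mu),1)$, Proposition \ref{PropLowerLDP} gives $\PP(m(Z_{c^n})\in\C{U})\ge e^{-AM_n}\ge c_1 n^{-A}$, so the sum diverges.

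The central step is decoupling. I would pass to a sparse subsequence $n_k:=\lceil k^{1+\eta}\rceil$ with $\eta=\eta(A)>0$ small enough that $\sum_k n_k^{-A'}=\infty$ for some $A<A'<1$, yet $n_k-n_{k-1}\to\infty$. Set $r_k:=Cc^{2n_k}M_{n_k}$, chosen so that all wells relevant for $m(Z_{c^{n_k}})\in\C{U}$ lie in $[-r_k,r_k]$. For each $k$, define a localized event $A_k$ requiring: \emph{(a)} the path $B$ restricted to the annulus $J_k:=\{r_{k-1}\le|t|\le r_k\}$ follows an appropriately scaled version of $\C{R}(\BS{h}^{(k)},\BS{x}^{(k)},\gd,\eps)$ from Proposition \ref{vessel}, chosen (via Lemma \ref{rateFunctionApprox}) so that the resulting $x_B/M_{n_k}$ lies in $\C{U}$; and \emph{(b)} $\sup_{|t|\le r_{k-1}}|B(t)|\ll c^{n_k}M_{n_k}$, so that no barrier inside $[-r_{k-1},r_{k-1}]$ competes with those the construction places in $J_k$. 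With $n_k\gg n_{k-1}$, event (b) has probability $1-o(1)$ by Brownian scaling. Letting $\C{F}_k:=\gs(B(t):|t|\le r_k)$, the Markov property of $B$ at $\pm r_{k-1}$ together with Proposition \ref{vessel} (uniform over starting points) yields $\PP(A_k\mid\C{F}_{k-1})\ge c_2 n_k^{-A'}$ on an a.s.\ event. The conditional second Borel--Cantelli lemma then gives $A_k$ infinitely often almost surely, and since $A_k\subset\{m(Z_{c^{n_k}})\in\C{U}\}$, a union bound over the countable family of pairs $(\mu,\C{U})$ finishes the proof.

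The main obstacle is the localization of the vessel construction to the annulus $J_k$: one needs to verify that the wells prescribed inside $J_k$ are the minimal wells of their depth that contain $0$ in their domain (so that $x_B$ picks them out), which requires modifying the initial block $E_0$ of Section \ref{specialSets} to be compatible with the endpoint conditioning at $\pm r_{k-1}$, and confirming that the asymptotic rate of Proposition \ref{vessel} is preserved uniformly over this conditioning.
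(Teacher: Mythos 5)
Your proposal reaches the statement by a genuinely different route than the paper. The paper does not localize or sparsify: it keeps the full events $A_n=\mathcal R(\BS{h}\,c^n,\BS{x}\,c^{2n}\log\log c^n,\eps,\eps)$ for \emph{every} $n$, shows $\sum\PP(A_n)=\infty$ via Proposition \ref{vessel}, and then handles the dependence head-on by proving a correlation inequality (Lemma \ref{secondMoment}: $\PP(A_k\cap A_l)\le C_0\PP(A_k)\PP(A_l)$ once $l-k$ exceeds roughly $\tfrac{1}{2}\log\log k/\log c$), which feeds into the Kochen--Stone lemma to give $\PP(A_n\text{ i.o.})\ge 1/C_0>0$, upgraded to probability one because $\{A_n \text{ i.o.}\}$ is a tail event. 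Your scheme trades this for localized annular events along a sparse subsequence plus L\'evy's conditional Borel--Cantelli, which buys you a clean ``probability one'' conclusion without Kochen--Stone or the $0$--$1$ law. But note that the work you defer to the end (``modifying the initial block $E_0$ to be compatible with the endpoint conditioning and confirming that the rate of Proposition \ref{vessel} is preserved uniformly'') is not a routine verification: it is exactly the content that the paper's Lemma \ref{secondMoment} supplies in its own packaging (Markov property at the edge of the previously constrained region, uniform control of the conditioned probability via the density bounds \eqref{PortStoneIneqU}--\eqref{PortStoneIneqL}, and the fact that $E_0$-type blocks contribute nothing to the rate). So your outline is viable, but its decoupling step is asserted rather than proved, and it is where essentially all of the paper's effort in this proposition lives.

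One concrete error to fix: your condition \emph{(b)} is at the wrong scale. The well depths relevant to $m(Z_{c^{n_k}})$ are of order $c^{n_k}$ (the barrier heights in $A_{n_k}$ are $h_i c^{n_k}$), while $c^{2n_k}M_{n_k}$ is the scale of positions, not heights. Requiring only $\sup_{|t|\le r_{k-1}}|B(t)|\ll c^{n_k}M_{n_k}$ does not prevent the path on $[-r_{k-1},r_{k-1}]$ from creating wells of depth comparable to, or larger than, $2h_Nc^{n_k}$; in that case $x_B(h)$ would remain trapped inside $[-r_{k-1},r_{k-1}]$ for all relevant depths, the rescaled process would collapse to the $t$-axis, and the inclusion $A_k\subset\{m(Z_{c^{n_k}})\in\C{U}\}$ would fail. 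You need $\sup_{|t|\le r_{k-1}}|B(t)|\le \eps^2 h_\alpha c^{n_k}$ (so the middle piece sits inside the confinement band of the modified $E_0$); this still holds for all large $k$ almost surely, since by the law of the iterated logarithm the sup is $O(c^{n_{k-1}}\log n_{k-1})=o(c^{n_k})$ under your gap $n_k-n_{k-1}\to\infty$. With that correction, and with the conditioned, uniform version of Proposition \ref{vessel} actually established, your argument goes through.
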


\begin{pf}
Note that it suffices to prove that every open set
$\mathcal U$ intersecting $\mathcal K$ contains a limit
point with probability one. Using this claim for all such
elements $\mathcal U$ of a countable base for $\mathcal M$,
we conclude that the limit points are a.s. dense in
$\mathcal K$. Since they form a closed set, this set must
contain $\mathcal K$.

By Lemma~\ref{l:nomin} the minimum of $I$ on an open set is
either $0$, $\infty$ or is not achieved. Therefore every
open set $\mathcal U$ intersecting $\mathcal K$ has
$\inf_{\mathcal U} I <1$. By Lemma
\ref{rateFunctionApprox}, there exists $(\mathbf{h}, \mathbf{x})\in
\mathcal S$ so that $I(\mathbf{h}, \mathbf{x})<1$ and
\[
\{m(x_B)\dvtx  B\in\mathcal R(\mathbf{h}, \mathbf{x},\eps, \eps)\}
\subset
\mathcal U
\]
for all small enough $\eps>0$.

Define $n_0=\lfloor1/\log c \rfloor+2$, and for $n\ge
n_0$, let $A_n$ be the set of paths $B$ so that the
rescaling satisfies
\[
\frac{B( c^{2n}\log\log c^n\times\cdot)}{c^n} \in\mathcal R(\BS
{h}, \BS{x}, \eps,
\eps).
\]
Note that if a path $B$ belongs to $A_n$, then the corresponding path
$Z_{c^n}$ from \eqref{zPath} satisfies $m(Z_{c^n}) \in\mathcal U$.
Thus it
suffices to show that $A_n$ i.o. a.s.

Since
%
\begin{equation}\label{AnDefinition}
A_n=\mathcal R(\BS{h}  c^n, \BS{x}  c^{2n} \log\log c^n , \eps,
\eps),
\end{equation}
the scaling property of Brownian motion implies
\[
\PP(A_n)=\PP(\mathcal R(\BS{h}, \BS{x}   \log\log c^n, \eps,
\eps)).
\]
Then Proposition~\ref{vessel} gives
\[
\liminf_{n\to\infty} \frac{\log\PP(A_n)}{\log\log c^n} \ge
-I(\BS{h}, \BS{x})-O_{\BS{h}, \BS{x}}(\eps)>-(1-\gd_1)
\]
for some $\delta_1\in(0, 1-I(\BS{h}, \BS{x}))$ and all small
enough $\eps$. Consequently, there is an $n_1$ so that for $n\ge
n_1$ we have $\PP(A_n)\ge n^{-1+\gd_1}$. Then for all $n$,
%
\begin{equation}\label{plbound}
\sum_{k=n_0}^n \PP(A_k)>C n^{\gd_1}
\end{equation}
for an appropriate constant $C>0$. In particular, $\sum
\PP(A_k)=\infty$. In order to conclude that $A_n$ i.o., we
use a correlation bound given in the upcoming Lemma
\ref{secondMoment}. Let $\Sigma_n:=1_{A_{n_0}}+\cdots +
1_{A_n}$. We write
\[
\ev(\Sigma_n^2)=\ev\Sigma_n+ 2\mathop{\mathop{\sum}_{k=n_0 }}_{
l>k}^n\PP(A_k\cap A_l).
\]
Let $\Delta, C_0$ be as in Lemma~\ref{secondMoment}, and
$d_k:=\Delta+(\log\log k) /(2\log c)$. We bound the
probabilities $\PP(A_k\cap A_l)$, $n_0\le k<l\le n$, in one
of two ways, according whether $|k-\ell|\le d_k$, thus
getting for their sum the upper bound
\[
C_0\mathop{\mathop{\sum}_{k=n_0 }}_{
l-k > d_k}^n \PP(A_k)\PP(A_l)+ d_n \sum_{k=n_0}^n
\PP(A_k)
\le \biggl(\frac{C_0}{2}+\frac{d_n}{\ev(\Sigma_n)} \biggr)(\ev
\Sigma_n)^2.
\]
But $d_n/\ev(\Sigma_n)\to0 $ by \eqref{plbound}, so that
the Kochen--Stone lemma [\citet{DU},  Exercise 2.3.20] gives
%
\begin{equation}\label{KStone}
\PP(A_n \mbox{ i.o.}) \ge\limsup_{n\to\infty}
\frac{(\ev\Sigma_n )^2}{\ev(\Sigma_n^2)}\ge1/C_0.
\end{equation}
To prove that $\{A_n \mbox{ i.o.}\}$ holds a.s., we will prove that it
is a tail event, that is, that it belongs to the $\sigma$-algebra
$\bigcap_{t>0}\sigma (B(s)\dvtx |s|\ge t )$, and we will apply
Theorem 8.2.7 from \citet{DU}.

To see this, fix $t_0>0$. A function $f\dvtx \D{R}\to\D{R}$ belongs to
$A_n$ if its values on a certain interval around zero satisfy certain
conditions imposed by the sets whose intersection defines $A_n$. For
$n$ that satisfies $t_0<\eps(x_\alpha\wedge|x_\beta|)   c^{2n}\log
\log c^n$, we isolate the conditions concerning the values of $f$ on
$[-t_0, t_0]$ and write
\begin{eqnarray*}A_n&=&\mathcal R(\BS{h}  c^n, \BS{x}  c^{2n} \log\log
c^n , \eps, \eps)\\&=&C\bigl(0, c^{2n} (\log\log c^n)  x_\alpha\eps
/(1-\eps), c^n h_\alpha\bigr)\\
&&{}\cap
C\bigl(0, c^{2n} (\log\log c^n)  x_\beta\eps/(1-\eps), c^n h_\beta
\bigr)\cap C_n\\
&=&C\bigl(0, t_0/(1-\eps), c^n h_\alpha\bigr)\cap
C\bigl(t_0/(1+\eps), c^{2n} (\log\log c^n) x_\alpha\eps/(1-\eps), c^n
h_\alpha\bigr)\\&&{} \cap
C\bigl(0, -t_0/(1-\eps), c^n h_\beta\bigr)\\
&&{}\cap
C\bigl(-t_0/(1+\eps), c^{2n} (\log\log c^n) x_\beta\eps/(1-\eps), c^n
h_\beta\bigr)\cap C_n
\\&=&C\bigl(0, t_0/(1-\eps), c^n h_\alpha\bigr) \cap C\bigl(0, -t_0/(1-\eps),c^n
h_\beta\bigr) \cap A_n',
\end{eqnarray*}
where $C_n$ is the intersection of the remaining sets involved in the
definition of $A_n$, and
\begin{eqnarray*}A_n'&:=&C\bigl(t_0/(1+\eps), c^{2n} (\log\log c^n) x_\alpha
\eps/(1-\eps), c^n h_\alpha\bigr)\\ &&{}\cap
C\bigl(-t_0/(1+\eps), c^{2n} (\log\log c^n) x_\beta\eps/(1-\eps), c^n
h_\beta\bigr)\cap C_n.
\end{eqnarray*}
%
Now $\{ A_n \mbox{ i.o.}\} \subset\{ A_n' \mbox{ i.o.}\}$, but also
$\{ A_n' \mbox{ i.o.}\} \subset\{ A_n \mbox{ i.o.}\}$ since every
function $f$ in the first set belongs to
$C(0, t_0/(1-\eps), c^n h_\alpha) \cap C(0, -t_0/(1-\eps),c^n
h_\beta)$ provided that $\max\{|f(s)|\dvtx  |s|\le t_0\}<\eps^2(h_\alpha
\vee h_\beta)  c^{n}$. And the last inequality holds for all large
$n$ because $c>1$, and $f$ is bounded on $[-t_0, t_0]$, being continuous.
Since for all large $n$ we have $A_n'\in\sigma (B(s)\dvtx  |s|\ge
t_0 )$, it follows that $\{ A_n \mbox{ i.o.}\}\in\sigma
(B(s)\dvtx  |s|\ge t_0 )$, and this proves our assertion.
\end{pf}

The next lemma shows a version of near independence for the family
of sets $\{A_n\dvtx  n\ge1\}$, defined in the proof of Proposition
\ref{geomSequences2}, and uses the notation set up in that proof.

\begin{lemma}\label{secondMoment}
There are $\Delta, C_0\in(0, \infty)$ depending on $\BS{h},
\BS{x}, \eps$ such that
\[
\PP(A_k\cap A_l)\le C_0   \PP(A_k)\PP(A_l)
\]
for $k\ge n_0$ and
%
\begin{equation}\label{lkdistance} l-k>\Delta+\frac{1}{2}\frac{\log
\log k}{\log c}.
\end{equation}
\end{lemma}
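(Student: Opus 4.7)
This is a near-independence estimate driven by a separation of scales. First note that the hypothesis \eqref{lkdistance} is equivalent to
$$c^{l-k}\ge c^\Delta\sqrt{\log k},$$
which, up to the factor $c^\Delta$, forces the spatial scale $c^l$ of the event $A_l$ to dominate the typical Brownian fluctuation $\sim c^k\sqrt{\log\log c^k}$ of $B$ across the time window $M_k\asymp c^{2k}\log\log c^k$ on which $A_k$ lives. My plan is to exploit this via the Markov property of the two-sided Brownian motion $B$ at the endpoints of a time interval $T_k=[-\tau_k^-,\tau_k^+]$ just large enough that $A_k=\C R(\BS h c^k,\BS x c^{2k}\log\log c^k,\eps,\eps)$ is $\sigma(B|_{T_k})$-measurable, and to set $V=(B(-\tau_k^-),B(\tau_k^+))$.

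Reading off the building blocks $E_i$ in the definition of $\C R$, every $B\in A_k$ satisfies $|B(t)|\le C_2 c^k$ throughout $T_k$; in particular $A_k\subseteq\{\|V\|_\infty\le C_2 c^k\}$. Inside $T_k$, the only restrictions imposed by $A_l$ come from the two initial confinement blocks of $E_0$ at scale $l$ and amount to keeping $B$ in a band of width $\asymp c^l$; call this event $A_l^{\rm in}$. All other restrictions of $A_l$ are supported on $\R\sm T_k$ once $l-k\ge\Delta$ is large enough; call their conjunction $A_l^{\rm out}$. Enlarging $\Delta$ so that $C_2 c^k\le\tfrac12\eps^2 h_{\alpha\vee\beta} c^l$ when $l-k\ge\Delta$ gives $A_k\subseteq A_l^{\rm in}$, so $A_k\cap A_l=A_k\cap A_l^{\rm out}$. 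By the Markov property at times $-\tau_k^-$ and $\tau_k^+$, the paths $B|_{T_k}$ and $B|_{\R\sm T_k}$ are conditionally independent given $V$; hence $A_k$ and $A_l^{\rm out}$ are conditionally independent given $V$, and
$$\PP(A_k\cap A_l)=\E\!\left[\PP(A_k\mid V)\,\PP(A_l^{\rm out}\mid V)\right].$$

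The heart of the proof is a two-sided uniform continuity estimate: there exist constants $c_3,C_3$ depending on $\BS{h,x},\eps$ with
$$c_3\,\PP(A_l^{\rm out}\mid V=0)\le\PP(A_l^{\rm out}\mid V=v)\le C_3\,\PP(A_l^{\rm out}\mid V=0)\quad\text{for all }\|v\|_\infty\le c^l/3.$$
I would prove this by translating $B$ on each half-line $[\tau_k^+,\infty)$ and $(-\infty,-\tau_k^-]$ by $-v_\pm$: the inherited $A_l^{\rm out}$-restrictions remain confinement, hole, and barrier blocks whose bands of width $\asymp c^l$ are merely shifted by a fraction at most $1/3$, so each block contributes at most a constant-factor distortion, and a finite product gives the bound; this reduces to a bounded number of one-block Brownian estimates of the type developed in Section~\ref{BrownianComputations}. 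Applied with $\|V\|_\infty\le C_2 c^k\le c^l/3$ on $A_k$, the upper bound yields $\PP(A_k\cap A_l)\le C_3\,\PP(A_l^{\rm out}\mid V=0)\,\PP(A_k)$.

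To close the argument I would show $\PP(A_l)\ge c_4\,\PP(A_l^{\rm out}\mid V=0)$. Decomposing $\PP(A_l\mid V)=\PP(A_l^{\rm in}\mid V)\PP(A_l^{\rm out}\mid V)$ and restricting to $\{\|V\|_\infty\le c^l/3\}$: on this event, $\PP(A_l^{\rm in}\mid V)\ge c_5>0$ since the Brownian bridge across $T_k$ starting and ending well inside a band of width $c^l\gg\sqrt{M_k}=c^k\sqrt{C_1\log\log c^k}$ stays in the band with probability bounded away from $0$; the lower continuity bound gives $\PP(A_l^{\rm out}\mid V)\ge c_3\PP(A_l^{\rm out}\mid V=0)$; and $\PP(\|V\|_\infty\le c^l/3)=1-o(1)$ since the Gaussian $V$ has standard deviation $\asymp c^k\sqrt{\log\log c^k}\ll c^l$. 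Combining these yields the lower bound, and setting $C_0=C_3/c_4$ completes the proof. The main obstacle is the uniform continuity estimate for $\PP(A_l^{\rm out}\mid V=v)$; once that is in hand, the rest is book-keeping via the Markov property and a Gaussian-ball computation.
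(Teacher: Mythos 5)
Your overall architecture is essentially the paper's: condition on the boundary value(s) of the time window carrying $A_k$, show that the conditional probability of the remaining requirements of $A_l$ is comparable to its unconditional probability, and pay a fixed constant for forcing $A_l$'s initial confinement over that small window (the paper does this on each half-line separately, conditioning at the right endpoint $j_k$ of $J_k$ and comparing $\max_{\Theta_l}q$ with $\min_{\tilde\Theta_l}q$). However, two steps of your proposal are genuinely flawed. First, the claim that every $B\in A_k$ satisfies $|B(t)|\le C_2c^k$ throughout $T_k$ (hence $\|V\|_\infty\le C_2c^k\le c^l/3$) is false: for the indices in $\C{I}_\infty$ the event $\mathcal R$ constrains only the reflected process $R_{w_q(1+\gd)}B$ and the increments from above via \eqref{res2}, so $B$ itself has no lower bound there and the endpoint $B(\tau_k^+)$ can be arbitrarily negative on $A_k$ (this freedom is precisely the source of the $\pi^2/8$ rate). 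Consequently $A_k\not\subseteq A_l^{\mathrm{in}}$, and, more to the point, you are not entitled to restrict your comparison estimate to $\|v\|_\infty\le c^l/3$; the only a priori restriction is that $\PP(A_l^{\mathrm{out}}\mid V=v)$ vanishes unless $v_+$ lies in the scale-$l$ confinement band $\Theta_l$ of width of order $h_\alpha c^l$, so the comparison must be proved uniformly over that entire band, including starting points near its edges.

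Second, that comparison is the heart of the lemma, and your proposed justification (translate by $-v_\pm$; the bands are ``merely shifted by a fraction at most $1/3$,'' so ``each block contributes at most a constant-factor distortion'') is not a proof: shifting a confinement band relative to the pinned starting value can change the staying probability by an unbounded factor as the start approaches or crosses the band edge, and the blocks beyond the first confinement window do not depend on $v$ at all once one conditions at the end $f_l$ of that window, so a block-by-block ``distortion'' bound is both unproven and not the relevant mechanism. What actually makes the ratio bounded is equilibration of band-confined Brownian motion: apply the Markov property at $f_l$ and compare the confined transition densities $r(x,\cdot)$ from different starting points using \eqref{PortStoneIneqU} and \eqref{PortStoneIneqL}; these are comparable, uniformly over starting points in the band, exactly because the remaining time $f_l-j_k$ exceeds $t_0(\eps)$ times the squared band width, and this is where the largeness of $\Delta$ in \eqref{lkdistance} is used a second time (your sketch uses $\Delta$ only for the scale separation). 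Your remaining bookkeeping — conditional independence given $V$, and the lower bound $\PP(A_l)\ge c_4\,\PP(A_l^{\mathrm{out}}\mid V=0)$ via confinement over the short window, where \eqref{lkdistance} guarantees $|T_k|\lesssim c^{2l-2\Delta}$ — is sound and parallels the paper, but without the heat-kernel comparison step the central estimate remains an assertion.
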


\begin{pf}
For the pair $(\BS{h}, \BS{x})\in\C{S}$, we will use the notation
of Section~\ref{stepSection}.

We assume that $x_N>0$. Let $p$ equal $\max(\C{I}\setminus
\C{I}_\infty)$ if the set is nonempty, and $\infty$
otherwise. For any integer $n\ge n_0$, define
\[
J_n:=c^{2n}\log\log c^n [x_p(1+\eps), x_N(1+\eps)].
\]
Recall that $x_\infty=-x_1$. $J_n$ is the interval where
$A_n$ imposes restrictions on~$B$.

$A_l$ is the intersection of several requirements the first
of which [the two confinement sets of $E_0$ in \eqref{start}]
refers to the time interval
\[
F_l:=c^{2l}\log\log c^l [\eps x_\beta, \eps x_\alpha].
\]
We would like to have $l$ so large that $J_k$ will be in
the interior of $F_l$, so that knowing that $A_k$ happened
does not influence much the probability of $A_l$. We ensure
that $J_k\subset F_l/2$ by assuming that
%
\begin{equation}\label{rest}
l>k+\frac{1}{2\log c} \biggl\{\log \biggl(
\frac{|x_p|}{|x_\beta|}\vee\frac{x_N}{x_\alpha} \biggr) +\log
\frac{2(1+\eps)}{\eps} \biggr\}
\end{equation}
for the rest of the proof. Note that \eqref{rest} is
implied by \eqref{lkdistance} with an appropriate choice of
$\Delta$. We let
\[
\Apl=\{B|[0, \infty)\dvtx  B\in A\},\qquad A^-=\{B|(-\infty, 0]\dvtx
B\in A\}.
\]
It is enough to prove the claim of the lemma for the pairs
$\{\Apl_k, \Apl_l\}, \{A_k^-, A_l^-\}$ as they are independent. We
will do it for the first. Let
\[
\Theta_l:= h_\alpha c^l [-\eps^2 , 1]\supset h_\alpha c^l\biggl[-\frac
{\eps^2}{2},
1-\frac{\eps^2}{2}\biggr]=:\tilde\Theta_l
\]
and
\[
\Apl_{k,
l}:=\Apl_k \cap \{B(s) \in\Theta_l \mbox{ for }0<s\in
J_k \}.
\]
Paths $B$ in $A_l$ satisfy $B(F_l\cap[0, \infty))\subset\Theta_l$.
So that $\Apl_k\cap\Apl_l=\Apl_{k,
l}\cap\Apl_l$
since $J_k\subset F_l$. Let $\Apl_l(J_k)$ denote the paths that
satisfy the restrictions put by
$\Apl_l$ for the time interval $J_k$, and define $\Apl_l(J_l\setminus J_k)$
analogously.
Denote by $j_k$ the right endpoint of $J_k$, and let
\[
q(x):=\PP\bigl (\Apl_l(J_l\setminus J_k) | B_{j_k}=x \bigr).
\]
We have
\[
\Apl_k\cap\Apl_l=\Apl_{k, l}\cap\Apl_l\subset
\Apl_{k, l}\cap\Apl_l(J_l\setminus J_k),
\]
and the probability of
the right-hand side can be written as
\[
\ev [\one_{\Apl_{k,l}} q(B_{j_k}) ]\le\PP(\Apl_k) \max
_{\Theta_l} q.
\]
On the other hand,
\begin{eqnarray*}
\PP(\Apl_l) &=&
\PP\bigl(\Apl_l(J_l\setminus J_k)\cap\Apl_l(J_k)\bigr)=
\ev\bigl [\one_{\Apl_l(J_k)} q(B_{j_k}) \bigr]\\&\ge&\PP\bigl(\Apl
_l(J_k) \mbox{ and } B_{j_k}\in\tilde\Theta_l\bigr)  \min_{\tilde
\Theta_l} q.
\end{eqnarray*}
So that
%
\begin{equation}\label{CorrIneq}
\frac{\PP(\Apl_k\cap\Apl_l)}{\PP(\Apl_k)\PP(\Apl_l)}\le
\frac{\max_{\Theta_l} q}{\min_{\tilde\Theta_l} q}
\PP\bigl(\Apl_l(J_k)
\mbox{ and } B_{j_k}\in\tilde\Theta_l\bigr)^{-1}.
\end{equation}
To bound the last term, note that $B|[0, \infty) \in\Apl_l(J_k)$
follows from
$B([0,j_k]) \subset\tilde\Theta_\ell$. The restriction
\eqref{lkdistance} on $l-k$ shows that
\[
c^{2k-2l} \log\log c^k<c^{-2\Delta} \biggl(1+\frac{\log\log c}{\log
2} \biggr)=:c_1.
\]
This and Brownian scaling yield the lower bound
\[
\PP\bigl(\Apl_l(J_k) \mbox{ and } B_{j_k}\in\tilde\Theta_l\bigr)\ge\PP
 \biggl(B\bigl([0, c_1(1+\eps)x_N]\bigr)\subset h_\alpha\biggl[-\frac{\eps^2}{2},
1-\frac{\eps^2}{2}\biggr] \biggr),
\]
which is positive and does not depend on $k, l$.\vadjust{\goodbreak}

To bound the fraction in the right-hand side of \eqref{CorrIneq}, note
that with $f_l$ the right endpoint of $F_l$, the event
$\Apl_l(F_l\setminus J_k)$ is equivalent to $B([j_k,f_l]) \subset
\Theta_\ell$ and $B_{f_l}\in[0, (1-\eps) h_\alpha c^l]$, by the
definition of the confinement set $C(0, x_\alpha\eps/\break(1-\eps),
h_\alpha)$. Let $r(x,y)$ denote the density of $B(f_l)$ for
Brownian motion started from $x$ at time $j_k$ restricted to this
event. By the Markov property, we have
\[
q(x) = \int r(x,y) \pr \bigl(\Apl_l(J_l\setminus F_l) |
B(f_l)=y \bigr)\,dy,
\]
which gives the bound
\[
\frac{\max_{\Theta_l} q}{\min_{\tilde\Theta_l} q} \le\max
\biggl\{\frac{r(x_1,y)}{r(x_2,y)}\dvtx  x_1\in\Theta_l, x_2\in\tilde\Theta
_l, y\in[0, (1-\eps) h_\alpha c^l] \biggr\}.
\]
With the notation introduced in the beginning of Section \ref
{BrownianComputations}, we have
\[
r(x, y)=Q^{(1+\eps^2)h_\alpha c^l}(f_l-j_k, x+\eps^2 h_\alpha c^l,
y+\eps^2 h_\alpha c^l),
\]
and \eqref{PortStoneIneqU}, \eqref{PortStoneIneqL} give that the
above maximum is bounded above by a constant (that depends only on
$\eps$) as long as
\[
\frac{f_l-j_k}{(1+\eps^2)^2h_\alpha^2 c^{2l}}\ge t_0(\eps).
\]
This holds for $k, l$ satisfying \eqref{lkdistance} provided that
$\Delta$ is large enough.
\end{pf}

\subsection*{From geometric sequences to the full family}
We will now show that Propositions~\ref{geomSequences1},
\ref{geomSequences2} imply the result for the full family. As
noted in \citet{VE}, this can be done easily using the scaling
properties of the rate function $I$ and the regular variation of the
scaling factor $a^2\log\log a$ in \eqref{zPath}.

\begin{proposition} \label{fullnet} With probability one, every
sequence $(m(Z_{t_k}))_{k\ge1}$ with $t_k\to\infty$ has a convergent
subsequence, and the set of all possible limit points is exactly
\[
\C{K}:=\{\mu\in\C{M}\dvtx  I(\mu)\le1\}.
\]
\end{proposition}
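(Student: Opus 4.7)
My plan is to bootstrap Propositions \ref{geomSequences1} and \ref{geomSequences2} from geometric subsequences to arbitrary $t\to\infty$, using a scaling identity relating $Z_t$ to $Z_{c^n}$ together with the scaling invariance of $I$. For $t>e$ and $c>1$, set $n=\lfloor\log t/\log c\rfloor$ and $a=t/c^n\in[1,c)$; then \eqref{zPath} directly yields the exact identity
$$Z_t(s)=\lambda_t\,a^{-2}\,Z_{c^n}(as),\qquad \lambda_t:=\frac{\log\log c^n}{\log\log t},$$
with $\lambda_t\to 1$ as $t\to\infty$ by the slow variation of $\log\log$. At the level of graph occupation measures this reads $m(Z_t)=\Psi_{a,\lambda_t,*}\,m(Z_{c^n})$, where $\Psi_{a,\lambda,*}\nu:=a^{-1}(T_{a,\lambda})_*\nu$ with $T_{a,\lambda}(u,y):=(u/a,\lambda y/a^2)$. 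Changing variable $u=at$ inside \eqref{rate} shows that $I\circ\Psi_{a,1,*}=I$ for every $a>0$ (both the factor $1/a^2$ in the integrand and the scaling of $s_{\mu\pm}$ cancel against $d(f_\mu+g_\mu)(at)$).

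I would work on the event of probability one on which Propositions \ref{geomSequences1} and \ref{geomSequences2} both hold for every rational $c>1$. Given $t_k\to\infty$, fix one such $c$ and define $n_k,a_k$ as above. By Proposition \ref{geomSequences1} I may pass to a subsequence along which $m(Z_{c^{n_k}})\to\mu_0\in\mathcal{K}$, and to a further subsequence along which $a_k\to a_\infty\in[1,c]$. Local-weak continuity of $(\mu,a,\lambda)\mapsto\Psi_{a,\lambda,*}\mu$, combined with $\lambda_{t_k}\to 1$, yields $m(Z_{t_k})\to\Psi_{a_\infty,1,*}\mu_0=:\mu$, so every sequence admits a convergent subsequence. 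Scaling invariance of $I$ gives $I(\mu)=I(\mu_0)\le 1$, so every limit point lies in $\mathcal{K}$. Conversely, given any $\mu\in\mathcal{K}$, Proposition \ref{geomSequences2} supplies $n_j\to\infty$ with $m(Z_{c^{n_j}})\to\mu$; then $t_j:=c^{n_j}$ exhibits $\mu$ as a limit point of the full family $(m(Z_t))_{t>e}$.

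The one technical step I expect to write out carefully is the local-weak continuity of the pushforward $(\mu,a,\lambda)\mapsto\Psi_{a,\lambda,*}\mu$. This should be routine: for a compactly supported continuous test function $f$ on $\mathbb{R}^+\times\mathbb{R}$, the preimages $T_{a,\lambda}^{-1}(\operatorname{supp} f)$ stay inside a fixed compact set for $(a,\lambda)$ in a neighborhood of $(a_\infty,1)$, the functions $f\circ T_{a_k,\lambda_k}$ converge uniformly to $f\circ T_{a_\infty,1}$ on that set, and the time-marginal of $m(Z_{c^{n_k}})$ is Lebesgue measure, so masses of bounded sets are uniformly controlled. These ingredients allow the standard passage to the limit in $a_k^{-1}\int f\circ T_{a_k,\lambda_k}\,dm(Z_{c^{n_k}})$, completing the argument.
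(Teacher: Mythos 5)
Your argument is correct and is essentially the paper's own proof: both decompose $t_k=a_kc^{n_k}$ with $a_k\in[1,c]$, use the scaling action on measures together with the invariance $I(\mu_a)=I(\mu)$, and transfer Propositions \ref{geomSequences1} and \ref{geomSequences2} from geometric sequences to the full family. Your explicit treatment of the $\log\log$ correction via $\lambda_t\to1$ and of the continuity of the rescaling map just spells out what the paper dispatches by citing regular variation.
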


\begin{pf}
For a measure $\mu\in\mathcal M$ and $a>0$, let $\mu_a\in\mathcal
M$ denote
the rescaled version of $\mu$ defined on every product of
measurable sets $H\times X\subset[0, \infty)\times\mathbb R$ as
\[
\mu_a(H\times X)=a   \mu(a^{-1}H \times a^{-2}X).
\]
Then for all continuous functions $\psi\dvtx [0, \infty)\times\mathbb R
\to\mathbb{R}$ with compact support,
%
\begin{equation} \label{scaledMeasure} \int\psi(h, x)\, d\mu_a(h,
x)=a\int\psi(ah, a^2 x) \, d\mu(h, x).
\end{equation}
Also, $f_{\mu_a}(t)=a^2f_\mu(t/a)$ for all $t\ge0$, and the
analogous statement holds for~$g_{\mu_a}$. Consequently, $I(\mu
_a)=I(\mu)$.

 For $t>1$, let $\mu^{(t)}=m(Z_t)$.
We will use the following claim.

\begin{claim*} $\!\!$If $\mu\!\in\!\mathcal M$, and for two sequences
$(t_k)_{k\ge1}, (p_k)_{k\ge1}$ with $\lim_{k\to\infty} t_k\!=\infty
, \lim_{k\to\infty} t_k/p_k=a\in(0, \infty)$ we have $\mu
^{(p_k)}\to\mu$, then $\mu^{(t_k)}\to\mu_{1/a}$.
\end{claim*}

\begin{pf} Take $\psi\dvtx [0, \infty)\times\mathbb
{R}\to\mathbb{R}$ continuous of compact support. For $k\ge1$, we may
assume that $p_k, t_k\ge1$, and let
\[
a_k:=\frac{t_k}{p_k}, \qquad  \beta_k:=\frac{\log\log p_k}{\log\log t_k}.
\]
Then $\int\psi(h, x)\, d \mu^{(t_k)}(h, x)$ equals
\begin{eqnarray*} \int_0^\infty\psi \biggl(t, \frac{x_B(a_k p_k
t)}{a_k^2 p_k^2 \log\log t_k} \biggr)\, dt&=&a_k^{-1} \int_0^\infty
\psi\biggl (a_k^{-1} s, a_k^{-2} \beta_k \frac{x_B(p_k s)}{p_k^2 \log
\log p_k} \biggr)\, ds\\  &=&a_k^{-1} \int\psi (a_k^{-1} h,
a_k^{-2} \beta_k x  )\, d \mu^{(p_k)}(h, x)
\\
  &=&aa_k^{-1}\int\psi (aa_k^{-1} h, a^2 a_k^{-2} \beta_k x
)\, d \mu^{(p_k)}_{1/a}(h, x).
\end{eqnarray*}
The last equality follows form \eqref{scaledMeasure}.
Since $a a_k^{-1}\to1$ and $\mu^{(p_k)}_{1/a}\to\mu_{1/a}$, it
suffices to prove that
\[
\int\psi (aa_k^{-1} h, a^2a_k^{-2} \beta_k x  )\, d \mu
^{(p_k)}_{1/a}(h, x)- \int\psi (h, x  )\, d \mu
^{(p_k)}_{1/a}(h, x)\to0
\]
as $k\to\infty$.

Let $C$ be the set of points in $[0, \infty)\times\mathbb{R}$ with
Euclidean distance at most 1 from the support of $\psi$, and $h^*$ the
maximum of the first projection of $C$.
For $\eps>0$, using the compactness of $C$, the uniform continuity of
$\psi$, and the fact that $\mu^{(p_k)}_{1/a}\in\mathcal{M}$, we
obtain that for large $k$, the absolute value of the last difference is
bounded from above by
\[
\int_C \varepsilon d \mu^{(p_k)}_{1/a}(h, x)\le\varepsilon h^*.
\]
This proves the claim.
\end{pf}

Now let $t_k\to\infty$ be a sequence. We fix a $c>1$, and write this
sequence uniquely as $t_k=a_kc^{i_k}$ with integers $i_k$ and real numbers
$a_k\in[1,c)$.

Regarding the first assertion of the proposition, note that by
Proposition~\ref{geomSequences1}, $m(Z_{c^{i_k}})$ has limit
points in $\C{K}$. Pick one, say $\mu$, and then passing to a
further subsequence along which $a_k$ converges to some limit
$a(c)\in[1, c]$, we see using the claim above that $\mu_{1/a(c)}$ is
a limit point along the
sequence~$t_k$.

For the second assertion, we have by Propositions
\ref{geomSequences1},~\ref{geomSequences2} that almost surely, all
limit points along $c^k$ are exactly the elements of the set
$\mathcal K$. It remains to show that along the above sequence
$(t_k)_{k\ge1}$, we do not get limit points outside $\C{K}$. If\vadjust{\goodbreak}
$\mu'$ is a limit point along $t_k$, then as above, we pass to a
further subsequence along which $a_k$ converges to some limit
$a(c)\in[1, c]$. It follows again from the claim above that along
$c^{i_k}$, $\mu'_{a(c)}$ is a
limit point. By Proposition~\ref{geomSequences1}, we have
$I(\mu'_{a(c)})\le1$. So that $I(\mu')=I(\mu'_{a(c)})\le1$, that is,
$\mu'\in\C{K}$.
\end{pf}

\section{The limit points of the motion}\label{sec7} \label{MotLIL}

The continuous time and space analogue of Sinai's walk is
diffusion in random environment, that is, the diffusion $X$ with
$X(0)=0$ that satisfies the formal differential equation
%
\begin{equation}\label{dre}d X(t)=d\beta(t)-\tfrac{1}{2}
V'(X(t)) \di t.
\end{equation}
Here, $\beta$ is standard Brownian motion, and $V$, the
environment, is a random function we pick before running the
diffusion. For the rigorous definition of this diffusion, as well
as its relation with Sinai's walk; see \citet{SHI}, \citet{SEI}.

In this work, we will consider diffusions run in a Brownian-like
environment. That is, we require from the measure governing $V$ to
be such that there is, on a possibly enlarged probability space, a
standard two-sided Brownian motion $B$ such
that for all $n\ge1$, we have
%
\begin{equation}\label{closetoBM}
\PP\Bigl(\sup_{|x|\le n}|V(x)-B(x)|\ge C_1 \log n\Bigr)\le\frac{1}{n^{C_2}}
\end{equation}
for some constants $C_1, C_2$. For these environments, the
diffusion does not explode in finite time. Moreover, its behavior
is dominated by the environment, and one aspect of this
phenomenon is captured by the following result. Recall the
definition of $x_B$ from Section~\ref{stepSection}.

\begin{theorem}[{[\citet{HU}, Theorem 1.1]}]
\label{Hu Theorem} Assume that $V$ satisfies
\eqref{closetoBM}. For every $\gd_1>0$, there exists $C,
t_0>0$ so that for $t\ge t_0$ and $\gl\ge1$, we have
%
\begin{equation} \label{Hu}
\pr\bigl(|X(t)-x_B(\log t)|>\gl\bigr)\le C\biggl (\frac{\log\log
t}{\sqrt{\gl}}+\frac{1}{(\log t)^{1-\gd_1}} \biggr).
\end{equation}
\end{theorem}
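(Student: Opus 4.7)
The approach is to combine the coupling assumption \eqref{closetoBM} with standard Brox-style localization arguments for one-dimensional diffusion in a Brownian environment. First, I would choose $n$ polynomial in $t$, say $n=(\log t)^3$, which is larger than the typical range of both $X(t)$ and $x_B(\log t)$ since both are $O((\log t)^2)$. The event on which $\sup_{|x|\le n}|V(x)-B(x)|>C_1\log n$ has probability $O((\log t)^{-3C_2})$, much smaller than the asserted error. On its complement one may replace $V$ by $B$ at the cost of an additive error of order $\log\log t$ in the potential, which translates into a $(\log t)^{O(1)}$ factor in mean hitting times but leaves the locations of the wells (and hence $x_B$) unchanged.

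Second, working in the Brownian environment, I would use the Feller representation of $X$ as a time-changed Brownian motion via the scale function $S(y)=\int_0^y e^{B(x)}\di x$ and speed measure $e^{B}\di x$. By the construction of $x_B(\log t)$, inside the minimal well of depth $\log t$ containing $0$, every upward excursion of $B$ that lies on the direct path from $0$ to $x_B(\log t)$ is at most $\log t$ high. The mean hitting time of $x_B(\log t)$ from $0$ admits an explicit double-integral expression in the scale and speed measures which is therefore at most $t\,(\log t)^{O(1)}$; a Markov inequality then shows that $X$ reaches $x_B(\log t)$ well before time $t$ with the required probability. Once there, any sub-well of $B$ inside the big well whose depth exceeds $\log t+\log\gl$ traps $X$ for time greater than $t$, so $X$ can escape to distance $\gl$ from $x_B(\log t)$ only through the sub-well geometry at depth $\le \log t+\log\gl$.

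Third, I would bound this deviation using the scaling property \eqref{xBScaling}. By Brownian scaling, a sub-well of $B$ of depth $h$ adjacent to $x_B(\log t)$ has spatial extent of order $h^2$ with tail $\pr(\text{extent}>\gl)\lesssim h/\sqrt{\gl}$, which comes from standard estimates on the first time Brownian motion makes an ascent of height $h$. Summing over depth scales $h\in[\log t-\log\log t,\log t]$ produces the factor $\log\log t/\sqrt{\gl}$, matching the first term in \eqref{Hu}. The main obstacle is controlling the environmental randomness in the hitting-time computations: one needs that the double integral $\int\!\!\int e^{B(x)-B(y)}\,\di x\,\di y$ over the relevant region is no larger than $e^{\log t}=t$ up to a $(\log t)^{O(1)}$ factor, except on an environment event of probability $(\log t)^{-(1-\gd_1)}$. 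This is delicate because heavy tails of $B$ make the integrand explode on rare configurations; the confinement estimates of Section \ref{BrownianComputations}, together with a large deviation bound for maximum upward excursions of $B$ over intervals of polylogarithmic length, are the tools I would combine to absorb these atypical events into the $(\log t)^{-(1-\gd_1)}$ error.
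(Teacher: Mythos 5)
This statement is not proved in the paper at all: it is imported verbatim from \cite{HU} (Theorem 1.1 there) and used as a black-box input in Section \ref{MotLIL} and in the proof of Theorem \ref{FunLIL}. So there is no internal proof to compare your argument with; a genuine proof would amount to redoing Hu's localization analysis, and your outline should be judged on that basis.

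On its own terms, your sketch captures the right physical picture (KMT-type coupling, scale/speed representation, trapping in the deep well, flatness of the potential near the bottom governing the spatial spread), but it has concrete gaps. First, the claim that replacing $V$ by $B$ ``leaves the locations of the wells (and hence $x_B$) unchanged'' is false: the theorem compares $X$, run in the environment $V$, with the well process of the \emph{coupled} $B$, and the $O(\log\log t)$ discrepancy in the potential shifts well depths and can move or change the bottom of the minimal well of depth $\log t$; quantifying exactly this mismatch is a substantial source of the error terms in \eqref{Hu} and cannot be waved away. Second, your trapping and hitting statements are asserted deterministically (``any sub-well whose depth exceeds $\log t+\log\gl$ traps $X$ for time greater than $t$'') or via a bare Markov inequality; what is needed are quantitative moment and tail bounds for exit and hitting times, uniform over environments in a good event, together with a proof that the bad environment event has probability at most $(\log t)^{-(1-\gd_1)}$ --- you acknowledge this step is ``delicate'' but leave it unproven, and the confinement results of Section \ref{BrownianComputations} cannot fill it: they are asymptotic statements of the form $\lim_{t\to\infty}t^{-1}\log\PP(\cdot)$, with no uniformity in the parameters you need, and they concern confinement of $B$ in an interval rather than the double integrals of $e^{B(x)-B(y)}$ that control hitting times. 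Third, the bookkeeping for the first term does not come out: summing a tail of the form $h/\sqrt{\gl}$ over depths $h\in[\log t-\log\log t,\log t]$ gives order $(\log t)(\log\log t)/\sqrt{\gl}$, not $\log\log t/\sqrt{\gl}$; the correct source of that term is a single reflection-principle estimate of the form $\pr\bigl(\sup_{[0,\gl]}(B-\min)\le c\log\log t\bigr)\asymp \log\log t/\sqrt{\gl}$, where the height $O(\log\log t)$ is produced by the coupling error and the $O(\log\log t)$ uncertainty in the effective trapping depth. As it stands the proposal is a plausible heuristic outline of Hu's theorem, not a proof of the quantitative bound \eqref{Hu}.
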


\subsection*{The limit points of the diffusion}
For the diffusion defined by \eqref{dre}, where $V$
satisfies \eqref{closetoBM}, we have the following analog
of Theorem~\ref{FunLIL}.

\begin{theorem}
\label{LILforDiffusion} With probability 1, the limit
points, as $a\to\infty$, in the topology of local weak
convergence of the graph occupation measures of the random
functions
\[
y_a:=\frac{X(e^{at})}{a^2\log\log a} , \qquad t\ge0,
\]
constitute the set
\[
\C{K}:=\{ \mu\in\C{M}\dvtx  I(\mu)\le1\}.
\]
Also, there is at least one limit point along every sequence $a_n\to
\infty$.\vadjust{\goodbreak}
\end{theorem}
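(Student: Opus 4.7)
The plan is to transfer the limit-point characterization of Proposition \ref{fullnet} for the rescaled environment process $Z_a(s)=x_B(as)/(a^2\log\log a)$ to the rescaled diffusion $y_a(s)=X(e^{as})/(a^2\log\log a)$, using the localization in Theorem \ref{Hu Theorem}. The key observation is that the topology of local weak convergence on graph occupation measures is insensitive to modifications of the underlying function on sets of small Lebesgue measure, so one only needs $y_a$ and $Z_a$ to be close in Lebesgue measure on each compact time interval; Hu's pointwise estimate is enough for this, even though it is too weak to yield a uniform-in-$s$ comparison via a naive union bound.

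Fix $\eta,T,\epsilon>0$. Applying Theorem \ref{Hu Theorem} with $t=e^{as}$ and $\gl=\epsilon a^2\log\log a$, and then using Fubini, one obtains, for any $\gd_1\in(0,1)$,
\begin{equation*}
\ev\,\text{Leb}\{s\in[\eta,T]:|y_a(s)-Z_a(s)|>\epsilon\}\le C\int_\eta^T\left(\frac{\log(as)}{a\sqrt{\epsilon\log\log a}}+\frac{1}{(as)^{1-\gd_1}}\right)\,ds.
\end{equation*}
Along any geometric sequence $a=c^n$, the right-hand side is summable in $n$, so Markov's inequality together with the first Borel--Cantelli Lemma yields that, almost surely, $\text{Leb}\{s\in[\eta,T]:|y_{c^n}(s)-Z_{c^n}(s)|>\epsilon\}\to 0$ for every rational $\eta,T,\epsilon>0$. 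Hence $y_{c^n}-Z_{c^n}\to 0$ in Lebesgue measure on every compact subinterval of $[0,\infty)$.

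A routine check shows that convergence in measure on each compact implies convergence of the difference of graph occupation measures to $0$ in local weak topology: for any continuous compactly supported $\varphi:[0,\infty)\times\D{R}\to\D{R}$, split the integral $\int\varphi(s,f_n(s))-\varphi(s,g_n(s))\,ds$ into the regions $\{|f_n-g_n|<\epsilon\}$ and its complement and apply uniform continuity of $\varphi$ to the first and boundedness of $\varphi$ to the second. Therefore $m(y_{c^n})$ and $m(Z_{c^n})$ have the same set of limit points almost surely. Combined with the fact that, by Propositions \ref{geomSequences1} and \ref{geomSequences2}, the limit points of $m(Z_{c^n})$ are exactly $\C{K}$, and with the compactness (and hence at-least-one-limit-point property) inherited from the exponential tightness of Lemma \ref{exptightness}, this proves the theorem along any geometric sequence.

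To upgrade from $a=c^n$ to an arbitrary $t_k\to\infty$, I would follow the rescaling argument from the proof of Proposition \ref{fullnet}. Writing $t_k=a_kc^{i_k}$ with $a_k\in[1,c]$ and extracting a sub-subsequence along which $a_k\to a(c)$, a short computation gives
$$y_{t_k}(s)=\frac{\log\log c^{i_k}}{a_k^2\log\log t_k}\,y_{c^{i_k}}(sa_k),$$
so $m(y_{t_k})$ is asymptotically the rescaling $(m(y_{c^{i_k}}))_{1/a_k}$. Since the rescaling map on $\C{M}$ is continuous and preserves $I$, both inclusions of limit points into $\C{K}$ and of $\C{K}$ into the set of limit points follow exactly as in Proposition \ref{fullnet}. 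The main obstacle is the poor pointwise rate in Theorem \ref{Hu Theorem}: it is only polynomial in $1/\log t$, so a pointwise union bound over a fine grid in $s$ cannot be closed. Averaging over $s$ via Fubini is what makes the Borel--Cantelli step work, and the forgiving nature of the graph-occupation-measure topology is what allows us to exploit this $L^1$-type control rather than an $L^\infty$-type control.
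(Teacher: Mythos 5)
Your proposal is correct, and its backbone --- localize via Theorem \ref{Hu Theorem}, exploit that the local weak topology on graph occupation measures only feels $L^1$/in-measure closeness of $y_a$ and $Z_a$ on compact time windows, then invoke the environment result --- is exactly the paper's strategy, including the same ``split at $\{|f_n-g_n|<\eps\}$'' continuity observation. Where you genuinely diverge is in how the full continuum $a\to\infty$ is handled. The paper does not use Borel--Cantelli along geometric sequences at all: it changes variables $w=as$ and dominates the bad set by the single $a$-independent random integral $\int_{a\eps}^{\infty} w^{-1}\one\big(|X(e^{w})-x_B(w)|>\eps\, w^2\log\log w\big)\,dw$, whose expectation is finite by Theorem \ref{Hu Theorem}; a.s.\ finiteness forces the tail to vanish, giving in-measure convergence of $y_a-Z_a$ along \emph{all} real $a\to\infty$ simultaneously, so Proposition \ref{fullnet} applies verbatim and no further scaling argument is needed. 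Your route gets the comparison only along $a=c^n$ (your Fubini--Markov--Borel--Cantelli step is fine, and your remark that a pointwise union bound over a grid cannot work is accurate), and then you must re-run the $t_k=a_kc^{i_k}$ rescaling/regular-variation argument of Proposition \ref{fullnet} at the level of the diffusion itself; your pathwise identity $y_{t_k}(s)=\frac{\log\log c^{i_k}}{a_k^2\log\log t_k}\,y_{c^{i_k}}(sa_k)$ is correct and makes this legitimate, at the cost of duplicating that argument and of handling a two-parameter rescaling whose spatial factor only asymptotically equals $a(c)^{-2}$ (so ``preserves $I$'' holds only in the limit --- the same regular-variation point the paper leans on). In short: the paper's tail-of-one-integral trick buys a cleaner reduction that quotes Proposition \ref{fullnet} as a black box; your version trades that for a more routine Borel--Cantelli estimate plus a repeated, but valid, scaling upgrade.
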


\begin{pf}
Note that if $f_n,g_n$ is a sequence of functions on $[a, b]$ with
$m(f_n) \to\mu$ and $f_n-g_n \to0$ in (Lebesgue) measure, then
$m(g_n)\to\mu$. Indeed, assuming that for bounded, uniformly
continuous $\varphi$ we have
\[
\int_a^b \varphi(t,f_n(t)) \,dt \to\int_{[a, b]\times\R} \varphi
\, d\mu,
\]
we break down the integral to the set where $|f_n(s)-g_n(s)|<\eps$
and its complement. Since $\varphi$ is uniformly continuous, on
this set the integrand is close to $\varphi(t,g_n(t))$, while the
measure of the complement of this set is small. Thus $m(g_n)\to
\mu$.

Using this observation, Proposition~\ref{fullnet} and the
definition of the topology of $\C{M}$, it is clear that to prove
Theorem~\ref{LILforDiffusion}, it suffices to show that for every
$0<\eps<M<\infty$, as $a \to\infty$ we have
\[
\mathcal L\{s\in[\eps, M]\dvtx  |X(e^{as })-x_B(as)|>\eps
a^2\log\log a\} \to0.
\]
We prove this for $0<\eps<M=1$, as this is in no way different than the
general $M$ case. By changing variables $w=as$, the above quantity
will become
\begin{eqnarray*}
&&\int_{a \eps}^a \frac{\one(|X(e^{w})-x_{B}(w)|>\eps
a^2\log\log a )}{a} \,dw\\
&& \qquad  \le\int_{a\eps}^{\infty}\frac{
\one (|X(e^{w})-x_{B}(w)|>\eps  w^2\log\log w )
}{w}\,dw.
\end{eqnarray*}
If the last integral is finite for some $a>0$, then it
converges to $0$ as $a\to\infty$. Its expectation is
bounded using Theorem~\ref{Hu Theorem}, provided $a$
satisfies $\eps a>\log t_0$ and $(\eps a)^2\log\log(\eps
a)>1$, by
\begin{eqnarray*}
&&\int_{a \eps}^{\infty}\frac{1}{w}
\pr\bigl(|X(e^{w})-x_{B}(w)|>\eps  w^2\log\log w \bigr) \,dw \\
&& \qquad \le
\int_{a\eps}^{\infty} \frac{c}{w}
 \biggl(\frac{\log w}{\sqrt{w^2\log\log w}}+\frac{1}{w^{1-\gd
_1}} \biggr)\,dw <\infty.
\end{eqnarray*}
So that the integral is finite with probability 1.
\end{pf}

\subsection*{The limit points of the walk} \label{walkLIL}
To prove Theorem~\ref{FunLIL}, we will embed the walk it in a diffusion
generated by an appropriate random environment $V$.

Let $(S_n)_{n\ge1}$ be Sinai's walk with $\operatorname{Var}(\log((1-p_1)/p_1))=1$.
Define the step potential $V$ as follows: $V(0)=0$, and for every $n\in
\Z$, $V$ is constant
in $[n-1, n)$, and jumps at $n$ by $V(n)-V(n-)=\log((1-p_n)/p_n)$.
This potential
can be placed on a possibly enlarged probability space with a two-sided
Brownian motion $B$ so that \eqref{closetoBM} is satisfied.
This follows from the strong approximation theorem of
Koml\'os--Major--Tusn\'ady [Theorem 1 in \citet{KMT}]. The theorem
requires that $Y:=\log((1-p_1)/p_1)$ has $\ev(e^{\gl Y})<\infty$ for
$\gl$ in a neighborhood of 0, which is exactly the assumption we
made for the law of $p$ in the \hyperref[sec1]{Introduction}.

The walk can be embedded in the diffusion $X$ run in the
environment $V$ as follows. Let $t_0=0$ and
$t_n=\inf\{t>t_{n-1}\dvtx  |X(t)-X(t_{n-1})|=1\}$ for $n\ge1$.

\begin{theorem}[{[\citet{HS}, Proposition 9.1]}] \label{HS
Theorem}$(X(t_n))_{n\ge1}$ has the
same law as $(S_n)_{n\ge1}$. Moreover, $\{t_{n+1}-t_n\dvtx  n\ge1\}$
are i.i.d. with distribution that of the first hitting time $T$
of 1 for reflected standard Brownian motion.
\end{theorem}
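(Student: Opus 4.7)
The plan is to exploit the fact that the potential $V$ is piecewise constant with jumps only at integers, so the diffusion $X$ behaves as a standard Brownian motion away from integer points and as a skew diffusion at each integer. Both assertions then follow by applying the strong Markov property at the hitting times $t_n$.

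For the first assertion, I would compute the scale function $s(x)=\int_0^x e^{V(y)}\,dy$, which is piecewise linear with slope $e^{V(m-1)}$ on $[m-1,m)$. The standard formula for diffusion then gives the probability of hitting $n+1$ before $n-1$ when starting from $n$ as
\begin{equation*}
\frac{s(n)-s(n-1)}{s(n+1)-s(n-1)}=\frac{e^{V(n-1)}}{e^{V(n-1)}+e^{V(n)}}=\frac{1}{1+e^{V(n)-V(n-1)}}=p_n,
\end{equation*}
using $V(n)-V(n-1)=\log((1-p_n)/p_n)$. By the strong Markov property at each $t_k$, $(X(t_n))_{n\ge 0}$ is then a nearest-neighbor Markov chain on $\Z$ with the same transition rule $n\mapsto n\pm 1$ with probabilities $p_n,1-p_n$ as Sinai's walk.

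For the second assertion, the key observation is that on $(n-1,n+1)$ starting from $n$, the diffusion $X$ is a skew Brownian motion centered at $n$ with skew parameter $p_n$: away from $n$ it coincides with standard Brownian motion, and at $n$ the above computation identifies the left/right excursion probabilities. The classical fact about skew BM that I would invoke is that $|X-n|$ has the law of reflected Brownian motion $|\beta|$ \emph{independently of the skew parameter}, since taking absolute value discards the sign choice made at each excursion from zero. Consequently, the exit time of $(n-1,n+1)$, namely the first hitting time of $1$ by $|X-n|$, has the distribution of the first hitting time $T$ of $1$ by reflected Brownian motion. Strong Markov at $t_n$ then yields that the increments $\{t_{n+1}-t_n\}$ are i.i.d.\ with law $T$.

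The main obstacle will be making rigorous the identification of $X$ with skew Brownian motion near each integer, since the formal SDE \eqref{dre} involves $V'$ that is a sum of Dirac masses. A standard route is to define $X$ through its scale function and speed measure (or via the associated Dirichlet form), and then verify the skew behavior by checking local times at the integer points or the generator on a suitable core of test functions. Once this identification is in place, the rest reduces to the strong Markov bookkeeping sketched above.
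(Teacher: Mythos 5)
This statement is not proved in the paper at all: it is quoted verbatim from \cite{HS} (Proposition 9.1), so there is no internal proof to compare against. Your proposal is a correct, self-contained argument along the standard lines. The scale-function computation is right: with $V$ constant on $[n-1,n)$, the slopes of $s$ are $e^{V(n-1)}$ and $e^{V(n)}$ on the two sides of $n$, giving exit probability $p_n$ to the right, and the strong Markov property at the $t_k$ then identifies the quenched law of $(X(t_n))$ with that of Sinai's walk in the same environment. For the time increments, the skew-Brownian identification is also sound, and the ``obstacle'' you flag is smaller than you suggest: for a step potential the diffusion \emph{is} defined through its scale function $s$ and speed measure $e^{-V(y)}\,dy$, and on $(n-1,n+1)$ these coincide (after centering at $n$) with the scale and speed of skew Brownian motion with parameter $p_n$ — the slope ratio $s'(n+)/s'(n-)=(1-p_n)/p_n$ encodes the transmission condition and the speed densities match — so no separate local-time or Dirichlet-form verification is needed. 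Since $|X-n|$ is then a reflected Brownian motion regardless of $p_n$, the conditional law of $t_{n+1}-t_n$ given the whole past is that of $T$ for every environment and every current position, which immediately gives both the common law and the independence of the increments (you do not even need the independence of exit side and exit time for the statement as written, though it does hold by the excursion-sign argument you mention and yields the stronger independence from the embedded walk).
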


We will need the fact that the law of $1/T$ has exponential tails.
This holds since if $1/T>x>0$, then the maximum or the negative of
the minimum of Brownian motion on $[0,1/x]$ is at least 1. Since
the maximum has the same distribution as $|B(x)|$, we have
%
\begin{equation}\label{1/T}
P(1/T>x) \le2P\bigl(|B(1/x)|>1\bigr) = 4P\bigl(B(1)>\sqrt{x}\bigr) \le c
e^{-x/2}.
\end{equation}

\begin{pf*}{Proof of Theorem~\ref{FunLIL}}
Let $t( \cdot )$ be the the piecewise linear continuous
extension of $t_n$ so that $t(n)=t_n$. To prove the theorem, it
suffices to show that for every $0<\eps<M<\infty$, as $a\to\infty$,
\[
\frac{S(e^{as})-x_{B}(as)}{a^2\log\log a} \to0\qquad  \mbox{in
measure on $[\eps,M]$, a.s.}
\]
Since $|S(s )-X(t(s))|\le1$, it suffices to show the
previous claim with $X(t(e^{as}))$ instead of $S(e^{as})$.
We break this down into two parts, namely
%
\begin{equation}\label{two conv in
measure} \frac{X( t(e^{as}))-x_B(\log t(e^{as}))}{a^2\log\log a} \to
0, \qquad\frac{x_B(as)-x_B(\log t(e^{as}))}{a^2\log\log a} \to0
\end{equation}
in measure on the interval $[\eps, M]$ as $a\to\infty$. We
assume for simplicity that $M=1$. Proceeding the same way
as for the diffusion, for the first claim it suffices to
prove that
\[
\int_{\eps a}^{\infty}\frac{1}{w} \one\bigl(|X(t(e^{w}))-x_B(\log
t(e^w))|>\eps  w^2\log\log w\bigr) \,dw<\infty
\]
for some $a>0$. The function $t$ has derivative equal to
$t_n-t_{n-1}$ in $(n-1, n)$, and undefined in $n$ for every
positive integer $n$. The integral over the $w$'s with
$1/t'(e^w)>\log w$ has expectation bounded above by
\[
\ev\int_{\eps a}^{\infty}\frac{1}{w} \one\bigl(1/t'(e^w)> \log w\bigr) \,
dw= \int_{\eps a}^{\infty}\frac{1}{w} \pr\bigl(1/t'(e^w)>\log w\bigr) \,
dw,
\]
which is finite because $1/t'$ has the same distribution as $1/T$ in
\eqref{1/T}. For the rest of the integral, we change variables $r=\log
t(e^w)$ and reduce the problem to the
finiteness of
\[
\int_{a\eps}^{\infty}\frac{1}{w} \frac{t(e^w)}{e^w}\frac{
\one(1/t'(e^w)\le\log
w)}{t'(e^w)}\one\bigl(|X(e^{r})-x_B(r)|>\eps  w^2\log\log w\bigr)
\,dr.
\]
By the law of large numbers and the fact that $\ev T=1$, we
have $t(s)/s \to1$. So
$t(e^w)/e^w\to1$ as $r\to\infty$, which
shows that $w/r\to1$ as well. Thus the above integral is finite
if
\[
\int_{a\eps}^{\infty}\frac{\log r}{r} \one\bigl(|X(e^{r})-x_B(r)|>(\eps/2)
r^2\log\log r\bigr) \,dr
\]
is finite. This follows by taking expectations and using Theorem
\ref{Hu Theorem}.

For the second convergence claim in \eqref{two conv in measure},
it suffices to show that
\[
\int_{2}^{\infty}\frac{1}{w} \one\bigl(x_B(w)\not=x_B(\log t(e^w))\bigr)
\,dw<\infty.
\]
Note that $x_B(w)\not=x_B(\log t(e^w))$ implies that $x_B$ has a
jump between $w$ and $\log t(e^w)$. By the law of large numbers,
for all $w$ large, this interval is contained in $(w-1, w+1)$. So
it suffices to show the finiteness of the integral
\[
\int_{2}^{\infty}\frac{1}{w} \one\bigl( x_B \mbox{ has a jump in }
(w-1, w+1)\bigr) \,dw.
\]
Applying Lemma~\ref{b jumps}, we bound its expectation from
above by
\[ c\int_{2}^{\infty}\frac{1}{w}
\log\frac{w+1}{w-1} \,dw<\infty.
\]
\upqed\end{pf*}

In the proof of Theorem~\ref{FunLIL}, we use the next lemma,
which gives a bound on the probability that $x_B$ jumps on an
interval.

\begin{lemma}\label{b jumps}
The process $x_B$ satisfies $ \pr(x_{B}(s)\not=x_B(t))\le
c |\log(t/s)|$ for some finite constant $c$ and all $t,s>0$.
\end{lemma}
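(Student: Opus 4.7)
The plan is to reduce the inequality to a single finiteness assertion by exploiting the scaling invariance of $x_B$.

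First I would apply the scaling relation \eqref{xBScaling} to get $\pr(x_B(s)\neq x_B(t))=\pr(x_B(1)\neq x_B(t/s))$, so that the probability depends only on $r:=t/s$. By symmetry between the roles of $s$ and $t$ one may assume $r>1$; and for $r\ge 2$ the quantity $c\log r\ge c\log 2$ is at least $1$ once $c\ge 1/\log 2$, so the inequality is trivial. It therefore suffices to treat $r\in(1,2]$.

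Next I would count jumps. Let $N(a,b)$ denote the number of heights $h\in(a,b]$ at which $x_B(h)\neq x_B(h^-)$. Since $x_B$ is left-continuous, $\{x_B(1)\neq x_B(r)\}\subset\{N(1,r)\ge 1\}$, hence $\pr(x_B(1)\neq x_B(r))\le\ev\,N(1,r)$. By \eqref{xBScaling} the law of $N(a,b)$ depends only on $b/a$, and additivity over consecutive subintervals gives, for $\nu(r):=\ev\,N(1,r)$, the relation $\nu(rr')=\nu(r)+\nu(r')$. Combined with the monotonicity of $\nu$ this forces $\nu(r)=(\log r)\,\nu(2)/\log 2$, provided $\nu(2)<\infty$; the lemma then follows with $c=\nu(2)/\log 2$.

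The core task is therefore to show $\ev\,N(1,2)<\infty$. I would track the nested sequence $W_0\subsetneq W_1\subsetneq\cdots$ of wells containing $0$ that $x_B$ visits as the height rises past $1$. At height $h$ the current well $W_i$ has a well-defined depth $D_i\ge h$, and $x_B$ remains constant on $(D_{i-1},D_i]$ with a possible jump exactly at $h=D_i$, so $N(1,2)\le\#\{i:D_i\in(1,2]\}$. The first depth $D_0$, the depth of the minimal well of depth $\ge 1$ containing $0$, is a functional of two-sided Brownian motion whose density near $1$ is bounded by standard Brownian fluctuation estimates, and the successive depths obey an analogous regeneration structure while the wells stay within an a.s.\ finite spatial range. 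A direct moment computation in this spirit gives the required finiteness. This last point is the main obstacle; everything else in the argument is the short scaling-and-additivity reduction above.
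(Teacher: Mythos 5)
Your reduction is sound as far as it goes: scaling via \eqref{xBScaling} reduces the claim to $r=t/s\in(1,2]$, and bounding $\pr(x_B(1)\neq x_B(r))$ by the expected number of jump heights in that range, together with the scaling/additivity identity for that count, does yield the logarithmic bound \emph{provided} the expected number of jumps with heights in $[1,2]$ is finite. (One cosmetic slip: since $x_B$ is left-continuous, $x_B(h)=x_B(h^-)$ for every $h$, so your count should be defined through right limits, $x_B(h^+)\neq x_B(h)$, with the relevant heights in $[1,r)$.) This is precisely the route the paper mentions in its first sentence of the proof --- the jumps of $x_B(e^t)$ form a translation-invariant point process with finite mean density --- and then deliberately sidesteps: instead of proving the finite intensity, the paper invokes the exact formula $\pr\bigl(x_B(1)=x_B(r)\bigr)=r^{-2}\,\frac{5-2e^{-r+1}}{3}$ from the proof of Theorem 2.5.13 in \cite{ZE}, from which the stated bound follows by an elementary estimate near $r=1$.

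The genuine gap is that you never prove the one statement carrying all the content, namely $\ev N(1,2)<\infty$; you yourself call it ``the main obstacle'' and then dispose of it with ``a direct moment computation in this spirit gives the required finiteness.'' The ingredients you cite do not amount to that computation: a bounded density for the first depth $D_0$ near $1$ controls one depth, not the number of successive depths $D_i$ falling in $(1,2]$, and an a.s.\ finite spatial range for the wells gives an a.s.\ finite count, not a finite expectation. To close the gap you would need to make the ``regeneration structure'' precise --- for instance via the renewal description of the $h$-extrema of two-sided Brownian motion (Neveu--Pitman), showing quantitatively that the successive depth ratios $D_{i+1}/D_i$ are stochastically bounded away from $1$, or otherwise producing an explicit tail bound for the number of nested wells with depth in $[1,2]$. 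That is a nontrivial piece of work, comparable to (and arguably harder than) simply quoting the exact formula as the paper does. As written, your proposal is a correct reduction plus an unproven key estimate, so it does not yet constitute a proof.
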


\begin{pf}
This holds because the jumps of $x_B(e^t)$ form a
translation-invariant point process on $\mathbb R$ with finite
mean density $c$. Rather than proving this, we will invoke the
exact formula for the above probability. Assuming that $s<t$, and
using the scaling property of $x_B$ [see \eqref{xBScaling}], the
probability in question equals $\pr(x_{B}(1)\not=x_B(t/s))$.
However,
\[
\pr\bigl(x_{B}(1)=x_B(t/s)\bigr)= \biggl(\frac{t}{s} \biggr)^{-2}\frac
{5-2e^{-(t/s)+1}}{3}
\]
as is shown in the proof of Theorem 2.5.13 in \citet{ZE}. And this
gives easily the required bound.
\end{pf}

\begin{pf*}{Proof of Corollary~\ref{c.application}} In fact, we will
prove that if $\gamma\dvtx [0,1]\to[0, \infty)$ is differentiable with
$(t\mapsto t^3 \gamma(t))$ nondecreasing, then
%
\begin{equation}\label{corollaryGeneralization}\limsup_{a\to\infty}
\frac{1}{a^2\log\log a}\int_0^1 \gamma(t) S(e^{at})\, dt =\frac
{4}{\pi^2} s_0^3 a(s_0),
\end{equation}
where $s_0$ is any root of $2\int_s^1 \gamma=s\gamma(s)$ in $(0, 1)$.\vadjust{\goodbreak}

For $H\dvtx [0,\infty) \times\D{R}\to\D{R}$ with compact support and
whose projection of the set of the discontinuity points in the
$x$-axis has Lebesgue measure zero, the map $(\C{M}\ni\mu\mapsto
\int H\di\mu)$ is continuous in the weak topology, because any
$\mu\in\C{M}$ has first projection Lebesgue measure. Combining
this with the definition of the graph occupation measure, we get
%
\begin{eqnarray}\label{weakConvergence}&&\limsup_{a\to\infty} \int
_0^\infty H \biggl(t, \frac{S(e^{at})}{a^2\log\log a} \biggr)\, dt
\nonumber
\\[-8pt]
\\[-8pt]&& \qquad =\sup\biggl \{\int H(x, y)\di\mu(x, y)\dvtx  \mu\in\C{M}, I(\mu)\le
1 \biggr\}.
\nonumber
\end{eqnarray}
Everywhere below, we use the abbreviation $A:=8/\pi^2$.

For the choice $H(x, y):=\gamma(x)  y  \mathbf{1}_{x\in[0, 1],
|y|\le A+1}$ the limits in equations \eqref{corollaryGeneralization},
\eqref{weakConvergence} agree because by Theorem 1.3 in \citet
{HS}, it holds
\[
\operatorname{\varlimsup}\limits_{n\to\infty} \frac{\max_{1\le k\le n} |S_k|}{(\log
n)^2\log\log\log n}=A.
\]
It remains to evaluate the supremum in \eqref{weakConvergence} for
this choice of $H$. If $\gamma$
is identically zero, the corollary holds trivially. So we assume
that $\gamma$ is positive somewhere in $[0,1]$. Since $\gamma$ is
nonnegative, it follows from the form of the rate function $I$
that the above supremum equals
\[
\sup \biggl\{\int_0^1 \gamma(t) f(t)\di t\dvtx  f(0)=0, f \mbox{ nondecreasing}, \int_0^1 t^{-2} \di f(t)\le A \biggr\}.
\]
We did not include the factor $\mathbf{1}_{|f(t)|\le A+1}$ inside the
integral because the conditions on $f$ imply that $0\le f(t)=\int_0^t
d f(s)\le\int_0^1 s^{-2} \di f(s)\le A$.

For a given nondecreasing $f\ge0$, define $F(t):=\int_0^t s^{-2}
\di f(s)$, so that $f(t)=\int_0^t s^2 \di F(s)$. We use this
representation of $f$ and apply first Fubini's theorem and then
integration by parts in $\int_0^1 \gamma(t) f(t)\di t$ to write it as
\[
\int_0^1 F(s)r'(s) \di s,
\]
where $r(s):=-s^2\int_s^1 \gamma(t) \di t$. Using the fact that
$(t\mapsto t^3 \gamma(t))$ is nondecreasing in $[0, 1]$, we find that
$r'$ is nonpositive before $s_0$ and nonnegative after $s_0$. And since
$F\le A$, the above integral is bounded above by
\[
-A r(s_0)=\frac{A}{2}s_0^3 \gamma(s_0).
\]
The last equality follows from $r'(s_0)=0$.

For the choice $f^\gamma(t)=s_0^2 A
\mathbf{1}_{(s_0, \infty)}$, we get $\int_0^1 \gamma(t) f^\gamma
(t) \di t=A
s_0^2\int_{s_0}^1 \gamma(t) \di t=A s_0^3\gamma(s_0)/2$, so that the
supremum is
achieved. Clearly, the only measure that achieves the supremum is
the element of $\C{M}$ that puts all its mass on the graph of
$f^\gamma$.\vadjust{\goodbreak}

Finally, for the case of the corollary, $\gamma(t)=t^r$, we compute
$s_0=\eta_r:=(\frac{2}{r+3})^{1/(r+1)}$, while the value of the
supremum is $(A/2)\eta_r^{r+3}$.
\end{pf*}

\section{The probability of confinement}\label{sec8} \label{BrownianComputations}
In this section, we compute the asymptotic decay of the
probabilities that Brownian motion or Brownian motion reflected
from its running minimum stay on certain bounded sets for large
intervals of time.

Fix $h>0, x\in(0, h), t>0$, and let $Q^h(t, x,  \cdot  )$ be
the density of the measure
\[
S\mapsto\PP_x\bigl(B_t\in S, B[0, t]\subset(0, h)
\bigr).
\]
Proposition 8.2 in \citet{PS} gives
%
\begin{equation}\label{PortStone}Q^1(t, x, y)=2\sum_{n=1}^\infty
e^{-n^2\pi^2 t/2}\sin(n\pi x)\sin(n\pi
y).
\end{equation}
Using this and Brownian scaling, we get that there exists a
universal constant $c_2$ so that for all $t\ge1$,
$x,y\in[0,h]$, we have
%
\begin{equation}\label{PortStoneIneqU}
Q^h(t, x, y)\le c_2 h^{-1}
\exp \biggl(-\frac{\pi^2}{2}\frac{t}{h^2} \biggr).
\end{equation}
Moreover, for every $\eps>0$ there exists a constant
$c_1=c_1(\eps)$ so that for all $t\ge1$, and $x,y\in[\eps
h,(1-\eps)h]$, we have
%
\begin{equation}\label{PortStoneIneqL}
Q^h(t, x, y) \ge
c_1(\eps)h^{-1}\exp \biggl(-\frac{\pi^2}{2}\frac{t}{h^2} \biggr).
\end{equation}

Recall from \eqref{runningMin} the notation for the past
minimum of a given process, and from \eqref{RBM} the
process $R=B-\underline B$. The probability that $R$ stays
confined in an interval for a large time interval $[0, t]$
decays exponentially in $t$. In the next lemma, we compute
the exact rate of decay.

\begin{lemma}\label{confinementCost}
For $K>0$, $\eps\in[0, 1/2)$, $w\in[0, 1)$ and $z\in(0,1)$,
\begin{eqnarray*}
(\mathrm{a})&&
\lim_{t\to\infty} \frac{1}{t}\log\PP_z\bigl(B([0, t])\subset[0,1],
B(t)\in[\eps, 1-\eps]\bigr)=-\frac{\pi^2}{2},
\\
(\mathrm{b})&&   \lim_{t\to\infty} \frac{1}{t}\log
\PP\bigl(R([0,t])\subset[0,1], R(t)\in[0, 1-\eps]   |  R(0)=w\bigr)=-\frac
{\pi^2}{8},
\\
(\mathrm{c}) && \lim_{t\to\infty} \frac{1}{t}\log\PP_0 \bigl(R([0,
t])\subset[0,1], \ul{B}(t)\ge-K  \bigr)=-\frac{\pi^2}{2}.
\end{eqnarray*}
\end{lemma}

For $\eps_1\in(0,1/2)$ fixed, the convergence in
(\BMrestricted) is uniform over \mbox{$z\in[\eps_1,
1-\eps_1]$}, and the convergence in (\restrictionNoFloor) is
uniform over $w\in[0,1-\eps_1]$.

Comparing (\restrictionNoFloor) and (\restrictionFloor),
note the drastic effect of the restriction $\ul{B}(t)\ge
-K$. The process $(R, -\ul{B})$ has the same law as $(|B|,
L)$ where $L$ is the process of local time at zero\vadjust{\goodbreak} for the
Brownian motion $B$. Phrased in terms of $(|B|, L)$, the
first event requires $B([0, t])\subset[-1, 1]$, the other
requires additionally that $L_t^0\le K$, that is, $B$ does not
hit zero many times. This restriction makes the second
event more like $B([0, t])\subset(0, 1]$, that is, $B$ is
essentially restricted to an interval of half size than
before.

\begin{pf}
(a) Follows by integrating \eqref{PortStoneIneqU} and
\eqref{PortStoneIneqL} over $y$.

(b) Since $R$ has the same law as the absolute value
of Brownian motion, the claim follows by the scaling
property of Brownian motion and (\BMrestricted).

(c) \textit{Lower bound:} Pick an open interval $V$ of
length 1 around 0 that does not contain $-K$. Then
\[
B([0, t])\subset V   \quad  \mbox{implies} \quad   R([0, t])\subset[0,1], \qquad  \ul
{B}_t \ge-K.
\]
By (a) applied with $\eps=0$, the probability of
the first event decays like $\exp (-t[\pi^2/\allowbreak 2 +o(1)] )$
as $t\to\infty$.

(c) \textit{Upper bound:} Let $A_t$ denote the event in
question. Subdivide the rectangle $[0,t]\times[-K,0]$ into
$n\times n$ small isomorphic rectangles. Each rectangle is
a product of a time interval ${\mathcal T_i}:=[(i-1)t/n, it/n]$ with
$i=1, 2, \ldots, n$ and a space
interval. Consider the graph of the process $\underline
B$, and let $J$ be the union of the subdivision rectangles
it intersects. Fix $m\ge1$. When $\underline B(t)\ge-K$,
we have:
\begin{itemize}
\item$J=\bigcup_{i=1}^n {\mathcal T_i}\times
{\mathcal B_i}$ for some space intervals $\mathcal B_i$.
\item Let $\mathcal N=\{i\dvtx \mbox{length } \mathcal B_i\le(m+2)K/n\}$.
Then $|\mathcal N|\ge(1-1/m)n$.
\end{itemize}
The first claim is clear. For the second, note that on the
time intervals $\mathcal T_i$ for $i\notin\mathcal N$ the
process $\underline B$ decreases by at least $mK/n$. But
the total decrease is at most $K$, so there are at most
$n/m$ such indices $i$. We have
\begin{eqnarray*}
P(A_t) &=& P \bigl(\underline B(t)\ge-K, \mbox{ graph }
B[0,t]\subset\mbox{graph }\underline B[0,t]+\{0\}\times[0,1] \bigr)
\\ &\le&\sum_J \PP(\mbox{graph }
B[0,t] \subset J+\{0\}\times[0,1]).
\end{eqnarray*}
Here the sum is over all unions $J$ of rectangles
satisfying the conditions above. By the Markov property, we
get the upper bound
\begin{eqnarray*}
&&\sum_J
\prod_{i=1}^n \max_x \PP_x\bigl(B({\mathcal T_i}) \subset
{\mathcal B_i}+ [0,1]\bigr)\\
&& \qquad \le2^{(n^2)} \max_x
 \bigl(\PP_x\bigl(B[0,t/n]\subset[0,1+(m+2)K/n]\bigr) \bigr)^{|\mathcal
N|}.
\end{eqnarray*}
The inequality follows by considering only the
indices $i\in\mathcal N$. Brownian scaling, part (a) with $\eps=0$
and the fact $|\mathcal N|\ge
n(1-1/m)$ gives
\[
\limsup_{t\to\infty} \frac{1}{t}\log\PP(A_t) \le
-\frac{1}{n} \biggl(\frac{\pi^2}{2}\frac{1}{(1+(m+2)K/n)^2} \biggr)n(1-1/m).
\]
Since this holds for $m,n$ arbitrary, we let
$n=m^2\to\infty$ to get the desired upper bound.\vadjust{\goodbreak}
\end{pf}

Below, we will use the operator $R_x$ of reflection from
the past infimum, defined in \eqref{reflectionFromInf}, and
the notation $R=R_0 B$ from \eqref{RBM}.

For $0\le w\le x<y$ and $0<h_1<h_2$, call $\Gamma(w, x, y, h_1, h_2)$
the set
\begin{eqnarray*}
 &&\{R_w f\in C(x, y, h_1)\cap H^R(y, h_1)\cap B(y, h_2) \}\\
 && \qquad {}\cap
 \bigl\{f-f\bigl(x(1+\gd)\bigr) \le\eps^2 \mbox{ on } [x(1+\gd), y(1+\gd
)] \bigr\},
\end{eqnarray*}
which is involved in the definition of $\mathcal R(\mathbf{h}, \mathbf
{x}, \gd,
\eps)$. More precisely, the set $E_i$ corresponding to an index
$i\in\C{I}_\infty$, defined in Section~\ref{specialSets}, is
exactly the set
\[
\Gamma\bigl(w_q(1+\delta), w_i, x_i, h_i, h_{i^+}\bigr).
\]

The following lemma computes for large $M$
the probability that the scaled Brownian
motion $B(M   \cdot  )$ is in the special sets $C, H,
B, \Gamma$, defined in Section~\ref{specialSets} and above.

\begin{lemma} \label{blocksCosts}
Let $0\le x<y, 0<h_1<h_2, h>0$, and small enough $\eps, \delta>0$.

Uniformly on $z \in[0, h-\eps h], z_1 \in[0,
h_1-\eps h_1], z_2\in\D{R}$, as $M\to\infty$ we have:
\begin{eqnarray*}
(\mathrm{a})  &&\frac{1}{M}\log\PP\bigl(B(M   \cdot )\in
C(x,y,h)  |  B\bigl(M x(1+\gd)\bigr)=z \bigr)  \to
-\frac{\pi^2}{2}\frac{y-x-\gd(x+y)}{h^2(1+\eps^2)^2};
\\
(\mathrm{b})&& \frac{1}{M}\log\PP\bigl(B(M  \cdot )\in H(y,h)  |
B\bigl(M y(1-\gd)\bigr)=z\bigr)  \to-\frac{\pi^2}{2}\frac{\gd y}{h^2(1+\eps
)^2};
\\
(\mathrm{c}) && \frac{1}{M}\log\PP\bigl(B(M  \cdot )\in
B(y,h_2)  |  B(M y)=z_1\bigr)
 \to-\frac{\pi^2}{2}\frac{\gd y}{h_2^2(1+\eps+\eps^2)^2}; \\
(\mathrm{d}) &&   \frac{1}{M}\log
\PP \biggl(B(M  \cdot )\in\Gamma   \bigg|
\begin{array}{l}
R\bigl(Mx(1+\gd)\bigr)=z_1\\
B\bigl(Mx(1+\gd)\bigr)=z_2
\end{array}
 \biggr)\\
 && \qquad  \to
-\frac{\pi^2}{8} \biggl( \frac{y-x-\gd x}{h_1^2}+ \frac{\gd
y}{h_2^2(1+\eps)^2} \biggr),
\end{eqnarray*}
where $\Gamma:=\Gamma(w, x, y, h_1, h_2)$ and the events
$C,H,B,\Gamma$ depend on $\eps, \delta$.
\end{lemma}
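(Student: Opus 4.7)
The plan is to reduce each of the four claims to Lemma~\ref{confinementCost} via Brownian scaling and the Markov property, arguing that the various ``visit'' subclauses (and the auxiliary bound appearing in part (d)) contribute only multiplicative factors that are bounded below uniformly in the permitted conditioning, and so leave the exponential rate unchanged.

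For parts (a)--(c), the conditional event for $B(M\,\cdot\,)$ on a time interval $[Ms_1,Ms_2]$ asks that the path stay in some interval of length $L$, end in a compact subinterval bounded away from the boundary (for $\eps$ small), and pass through a small target region of positive probability. Brownian scaling (rescale time by $L^2$ and space by $L$) converts this to the event that standard Brownian motion, confined to $[0,1]$ up to time $M(s_2-s_1)/L^2$, ends in a compact subinterval. Lemma~\ref{confinementCost}(a), together with the uniformity in the starting point stated immediately after it, yields decay rate $-\tfrac{\pi^2}{2}(s_2-s_1)/L^2$. Plugging in $(s_2-s_1,L)=(y-x-\gd(x+y),\,h(1+\eps^2))$ for (a), $(\gd y,\,h(1+\eps))$ for (b), and $(\gd y,\,h_2(1+\eps+\eps^2))$ for (c) gives the three asserted rates.

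For part (d), I would use that conditional on $R(Mx(1+\gd))=z_1$ and $B(Mx(1+\gd))=z_2$, the process $R_w B$ on $[Mx(1+\gd),\infty)$ evolves as a reflected Brownian motion started at $z_1$, with reflection level determined by the pre-conditioning past minimum of $B$. I would split at time $My$: on $[Mx(1+\gd),My]$, the sets $C(x,y,h_1)\cap H^R(y,h_1)$ force $R_w B\subset[0,h_1]$ with a visit to $0$; on $[My,My(1+\gd)]$, the set $B(y,h_2)$ together with $R_w B\ge 0$ forces $R_w B\subset[0,h_2(1+\eps)]$ with a visit above $h_2$. Applying Lemma~\ref{confinementCost}(b) to each piece after the appropriate Brownian rescaling, and combining via the Markov property at $My$, yields the two contributions
\[
-\frac{\pi^2}{8}\frac{y-x-\gd x}{h_1^2}\qquad\text{and}\qquad -\frac{\pi^2}{8}\frac{\gd y}{h_2^2(1+\eps)^2},
\]
whose sum is the stated rate.

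The principal obstacle will be the auxiliary constraint $f-f(x(1+\gd))\le\eps^2$ on $[x(1+\gd),y(1+\gd)]$ in part (d): unlike the visit clauses, this is an upper bound on $f$ itself rather than on $R_w B$, and so implicitly restricts the running infimum of $f$. My plan is to show that, conditionally on the $R_w B$-confinement events, this additional bound holds with probability bounded below uniformly in the starting data, by sandwiching $f$ between its running infimum (which the $H^R$ condition forces to decrease only mildly) and $R_w B$ above, and absorbing a small quantitative loss into the already-present $\eps$-slack in the rate. The visit subclauses in (a)--(d) themselves I would handle by the standard device of inserting a stopping time at which the confined process first reaches a point near the middle of the admissible band, applying the strong Markov property, and executing an explicit positive-probability excursion that realizes the required visit.
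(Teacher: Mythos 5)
Your treatment of (a)--(c) is correct and essentially the paper's: Brownian scaling plus Lemma \ref{confinementCost}(a) with its stated uniformity gives the confinement rates, and the visit clauses cost only a constant factor (the paper argues this by noting that confinement-without-visit is contained in confinement in a strictly narrower band, hence decays faster, while you realize the visit by a unit-time excursion; both work). Likewise, your reduction of (d) for the event $\Gamma'$ obtained by deleting the constraint $f-f(x(1+\gd))\le\eps^2$ --- splitting via the Markov property and applying Lemma \ref{confinementCost}(b) to the confinement/hole piece and the barrier piece of the reflected process --- is exactly what the paper does, and it gives the two stated contributions.

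The gap is in your handling of the extra constraint \eqref{res2} in (d). The sandwich you describe does not deliver it: confinement of $R_wB$ in $[0,h_1]$ (resp.\ $[0,h_2(1+\eps)]$) only yields $f(s)-f(x(1+\gd))\le R_wB(Ms)\le h_1$ (since the running infimum is nonincreasing), which is far weaker than $\le\eps^2$; and the parenthetical claim that the $H^R$ condition makes the running infimum ``decrease only mildly'' points the wrong way --- a mild decrease of the infimum is precisely the bad case, because the constraint only becomes harmless after the infimum has dropped by about the band width. Showing instead that, conditionally on the $\Gamma'$-events, the constraint holds with probability bounded below would require controlling the confinement-conditioned (Doob-transformed) reflected motion during an initial window, and nothing in your sketch supplies that; moreover you cannot ``absorb a small quantitative loss into the $\eps$-slack'', since the limit in (d) is exact and its first term $\frac{y-x-\gd x}{h_1^2}$ carries no $\eps$ at all --- any genuine loss in the band width would change the constant. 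The paper resolves this deterministically: for the lower bound it intersects with an event $\Delta_M$ forcing the path, within one \emph{unscaled} time unit after $Mx(1+\gd)$, to drop by at least $h_2(1+\eps)$ while staying below $f(Mx(1+\gd))+\eps^2$ and keeping $R\le h_1-\eps h_1$; this has probability bounded below independently of $M$, and after the plunge $f(s)\le R_wB(Ms)+\inf f\le h_2(1+\eps)+f(Mx(1+\gd))-h_2(1+\eps)$, so the constraint holds automatically on the rest of $[x(1+\gd),y(1+\gd)]$. One then applies the Markov property at time $Mx(1+\gd)+1$ and notes the unit shift does not affect the rate; the upper bound is trivial since $\Gamma\subset\Gamma'$. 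You need this (or an equivalent quantitative argument) to close (d).
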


\begin{pf}
(a) It follows from Lemma~\ref{confinementCost}(\BMrestricted) and
the scaling property
of Brownian motion.

(b) The exponential rate of decay of the event in question is the
same as the one of
confinement on $[-\eps h, h]$ between times $y(1-\gd), y$ and
ending in $[0, (1-\eps)h]$. Because the difference of the two events
is contained on the event of confinement on the smaller
interval $[-\eps h+\eps^2 h, h]$, which decreases exponentially
faster. Thus, the result follows again from Lemma \ref
{confinementCost}(\BMrestricted).

(c) The same reasoning as in part (b) proves this claim too.

(d) We can assume that $w=0$. Then we let $\Gamma(x, y, h_1,
h_2)=\Gamma(0, x, y, h_1,
h_2)$, and $\Gamma'(x, y, h_1, h_2)$ to be only the first set in the
intersection defining $\Gamma(x, y,\allowbreak h_1,
h_2)$; that is, we remove the restriction
$f-f(x(1+\gd)) \le\eps^2$ on $[x(1+\gd),\break y(1+\gd)]$. We first
prove the claim with $\Gamma'$ in
place of $\Gamma$. To this aim, we observe that
%
\begin{eqnarray}\label{RBC}
&&\lim_{M\to\infty}\frac{1}{M}\log\PP\bigl(R(M  \cdot  )\in C(x, y,
h_1)    | R\bigl(M x(1+\gd)\bigr)=z_1\bigr)\nonumber
\\[-8pt]
\\[-8pt]
&& \qquad =-\frac{\pi^2}{8}\frac{y-x-\gd(x+y)}{h_1^2},\nonumber\\ \label{RBH}
&&\lim_{M\to\infty} \frac{1}{M}\log\PP\bigl(R(M  \cdot )\in H^R(y,
h_1)   |  R\bigl(My(1-\gd)\bigr)=z_1\bigr)= -\frac{\pi^2}{8}\frac{\gd
y}{h_1^2},
\\ \label{RBB}
&&\lim_{M\to\infty} \frac{1}{M}\log\PP\bigl(R(M  \cdot )\in B(y,
h_2)  |  R(My)=z_1\bigr)=
-\frac{\pi^2}{8}\frac{\gd y}{h_2^2(1+\eps)^2},
\end{eqnarray}
where the convergence is uniform over $z_1\in[0, h_1-\eps h_1]$.

The first expression follows from Lemma \ref
{confinementCost}(\restrictionNoFloor).
For the second, an upper bound is given by the same relation because
the event requires confinement
on $[0, h_1]$ for the time interval $[M y(1-\gd), M y\gd]$. For a
lower bound, we will consider two events whose intersection is
inside the event of interest and whose probability we will
estimate. The first event
\[
 \left\{
\begin{array}{c}
\mbox{In the time interval } [My(1-\gd), My(1-\gd)+1], \\
\mbox{$R$ \mbox{ visits } 0, stays in $[0, h_1]$, ends in } [0, h_1-\eps h_1]
\end{array}
 \right\}
\]
realizes the requirement of the visit to zero. Given that
$R(My(1-\gd))=z_1\in[0, h_1-\eps h_1]$, this event has a positive
probability independent of $M$. The second event is
\[
\left \{
\begin{array}{c}
\mbox{In the time interval } [My(1-\gd)+1, My] ,\\
R \mbox{ stays in $[0, h_1]$, ends in } [0, h_1-\eps h_1]
\end{array}
 \right\}.
\]
To compute the probability of the intersection, we apply the
Markov property at time $My(1-\gd)+1$. Then the probability of the
second event, conditioned on the value of $R$ at $My(1-\gd)+1$,
will decay exponentially as $M\to\infty$ with the same rate as if
$R$ was staying in $[0,h_1]$ in the slightly larger time interval
[$My(1-\gd), My]$ and was ending in $[0, h_1-\eps h_1]$. So that the
lower bound obtained for the left-hand side of \eqref{RBH}
coincides with the upper bound. Equation \eqref{RBB} is proved in
the same way.

Relation (\lastCost) with $\Gamma'$ is place of $\Gamma$ now follows
by applying the Markov property and using \eqref{RBC}, \eqref{RBH},
\eqref{RBB}.

To prove (\lastCost) itself, we note that the left-hand side increases
if we put
$\Gamma'$ in place of $\Gamma$. This observation, together with the
above, gives an upper bound, but we
can show a lower bound, too.

Let $\Delta_M$ be
the set of continuous functions $f$ on $[0, \infty)$ with
\begin{eqnarray*}
f\bigl(x(1+\gd)+M^{-1}\bigr)-f\bigl(x(1+\gd)\bigr)&\le&-h_2(1+\eps),\\
f(s)-f\bigl(x(1+\gd)\bigr)&<&\eps^2, \qquad  R f(s) \le  h_1-\eps h_1
\end{eqnarray*}
for $s\in[x(1+\gd), x(1+\gd)+M^{-1}]$, and $E_M$ the set
\[
C\bigl(x+\bigl(M(1+\gd)\bigr)^{-1}, y, h_1\bigr)\cap H^R(y, h_1)\cap B(y, h_2).
\]
Then
\[
 \{B(M   \cdot )\in\Delta_M \}\cap \{R(M   \cdot
)\in E_M \}\subset \{B(M  \cdot  )\in
\Gamma(x, y, h_1, h_2) \}.
\]
To see this, note that the inclusion holds with $\Gamma'$
in place of $\Gamma$. But because at $x(1+\gd)+M^{-1}$ the
process $B(M   \cdot )-B(M x(1+\gd))$ takes a value less
than $-h_2(1+\eps)$, and after that $R$ stays below
$h_2(1+\eps)$, it follows that $B(M   \cdot )-B(M
x(1+\gd))$ stays negative in the interval
$[x(1+\gd)+M^{-1}, y(1+\gd)]$. And of course it stays below
$\eps^2$ in $[x(1+\gd), x(1+\gd)+M^{-1}]$ because of
$\Delta_M$.

By applying the Markov property at time $Mx(1+\gd)+1$, we get that
the probability of the above intersection, conditioned on the
values of $R(Mx(1+\gd)), B(Mx(1+\gd))$ as in (\lastCost),
is at least the product of
%
\begin{equation}\label{yoda1}
\PP \biggl(B(M  \cdot )\in\Delta_M   \bigg |
\begin{array}{c}
R\bigl(Mx(1+\gd)\bigr)=z_1\\
B\bigl(Mx(1+\gd)\bigr)=z_2
\end{array}
 \biggr)
\end{equation}
and
%
\begin{equation}\label{yoda2}
\inf_{z_3\in[0, h_1-\eps h_1]} \PP\bigl(R(M   \cdot  )\in E_M|
R\bigl(Mx(1+\gd)+1\bigr)=z_3\bigr).
\end{equation}
The probability in \eqref{yoda1} is positive and does not depend
on $M$. The asymptotic decay as $M\to\infty$ for the
probability in \eqref{yoda2} is computed as in the case of $\Gamma'$. The
change in the restriction interval from $[Mx(1+\gd),
My(1-\gd)]$ to $[Mx(1+\gd)+1, My(1-\gd)]$ does not change
the result.
\end{pf}

\section{The topology of $\mathcal M$ and step functions}\label{sec9}
\label{s.topology}

This section contains the proofs of the topological lemmas
used in Sections~\ref{UpperBoundSection},
\ref{specialSets} and~\ref{EnvLIL} for the large
deviation principle and the functional law of the iterated logarithm
for the environment.

Lemma~\ref{densityBelow} is a consequence of Lemmas
\ref{NbhdAssumption} and~\ref{densityBelow3} below.

\begin{lemma} \label{NbhdAssumption}
Let $\mu\in\C{M}$ and $(\mathbf{h}, \mathbf{x})\in\C{S}$. Assume that
%
\begin{eqnarray} \label{nbhdAssumption}\qquad
\begin{tabular}{l}
$\mbox{there is }\BS{x}'\in\R^N \mbox{ so that }(h_i,
x_i')\in\supp(\mu) \mbox{ for all }i\in\C{I}   \mbox{ and }
x_i'>x_i $\\ $\mbox{if } x_i>0 \mbox{ and }x_i'<x_i \mbox{
if } x_i<0.$
\end{tabular}
\end{eqnarray}
Then the set ${\mathcal U}(\mathbf{h}, \mathbf{x}, \eps)$ defined in
\eqref{neighborhood} is a neighborhood of $\mu$ for every
$\eps>0.$
\end{lemma}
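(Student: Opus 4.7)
The plan is to translate the positivity conditions defining $\mathcal{U}(\BS{h,x},\eps)$ into conditions of the form $\int \varphi_i\,d\nu>0$ for a finite family of continuous compactly supported test functions $\varphi_i$, and then invoke the definition of local weak convergence directly.

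First, I would fix $i\in\C{I}$ and examine the point $(h_i,x_i')$. By hypothesis $(h_i,x_i')\in\supp(\mu)$, so every open neighborhood of this point has strictly positive $\mu$-mass. Consider the case $x_i>0$ (the case $x_i<0$ is symmetric). Since $x_i'>x_i$ and $\eps>0$, I can choose a radius $r_i>0$ so small that the open box
$$V_i:=(h_i-r_i,h_i+r_i)\times(x_i'-r_i,x_i'+r_i)$$
is contained in the strip $(h_i-\eps,h_i+\eps)\times(x_i,\infty)$ and is a bounded (hence relatively compact) subset of $\R^+\times\R$. By definition of the support, $\mu(V_i)>0$.

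Next, I would approximate the indicator of $V_i$ from below by a continuous compactly supported function $\varphi_i\colon\R^+\times\R\to[0,1]$ with $\supp(\varphi_i)\subset V_i$ and $\int\varphi_i\,d\mu>0$ (the existence of such $\varphi_i$ follows from inner regularity of $\mu$ restricted to $V_i$, or simply from a monotone bump approximation, since $\mu(V_i)>0$). Doing this for each $i\in\C{I}$ yields a finite family of continuous compactly supported functions $\varphi_1,\ldots,\varphi_N$. The set
$$\C{V}:=\Big\{\nu\in\C{M}:\int\varphi_i\,d\nu>0 \text{ for all } i\in\C{I}\Big\}$$
is, by the very definition of the topology of local weak convergence, an open neighborhood of $\mu$: it is a finite intersection of preimages of $(0,\infty)$ under the continuous maps $\nu\mapsto\int\varphi_i\,d\nu$. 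Finally, for any $\nu\in\C{V}$, the condition $\int\varphi_i\,d\nu>0$ forces $\nu(V_i)>0$, and $V_i\subset(h_i-\eps,h_i+\eps)\times(x_i,\infty)$ (or the analogous strip in the negative case) gives $\nu\in\mathcal{U}(\BS{h,x},\eps)$. Hence $\C{V}\subset\mathcal{U}(\BS{h,x},\eps)$, which exhibits $\mathcal{U}(\BS{h,x},\eps)$ as a neighborhood of $\mu$.

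The argument is essentially a lower Portmanteau statement tailored to the local weak topology; the only point requiring mild care is that the test functions $\varphi_i$ be chosen with \emph{compact} support so that they are admissible for the local weak topology, which is why the gap $x_i'>x_i$ together with $\eps>0$ is used to pick the $V_i$ as bounded open boxes. No nontrivial obstacle arises.
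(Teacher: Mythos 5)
Your argument is correct: choosing bounded open boxes around the support points $(h_i,x_i')$ inside the strips, minorizing their indicators by continuous compactly supported bump functions, and using continuity of $\nu\mapsto\int\varphi_i\,d\nu$ in the local weak topology is exactly the ``straightforward'' argument from the definition of weak convergence that the paper alludes to and omits. Nothing further is needed.
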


The proof is straightforward using the definition of weak
convergence, so we omit it.

\begin{lemma}\label{densityBelow3} For each $\mu\in\C{M}$ and
$A<I(\mu)$, there is $(\mathbf{h}, \mathbf{x})\in\C{S}$ satisfying
\eqref{nbhdAssumption} so that $I( \mathbf{h}, \mathbf{x})>A.$
\end{lemma}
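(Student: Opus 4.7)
The plan is a Riemann--Stieltjes discretization of the integrals defining $I(\mu)$. By the symmetry built into the definition of $I$, assume $s_{\mu-}=\infty$, and fix $A' \in (A, I(\mu))$ (any finite $A' > A$ if $I(\mu) = \infty$). Since $f_\mu$ and $g_\mu$ are nondecreasing, each has at most countably many jumps and the integrals in $I(\mu)$ are honest Lebesgue--Stieltjes integrals. Choose $0<\eta<T$ with $T>s_{\mu+}$ so that the truncation
\[
\frac{\pi^2}{2}\int_\eta^{s_{\mu+}} \frac{d(f_\mu+g_\mu)(t)}{t^2}+\frac{\pi^2}{8}\int_{s_{\mu+}}^T \frac{dg_\mu(t)}{t^2}
\]
exceeds $A'$, and pick a partition $\eta=t_0<t_1<\cdots<t_M=T$ whose nodes avoid the countable set of jump points of $f_\mu, g_\mu$ and are fine enough that the corresponding Riemann--Stieltjes sums still exceed $A'$; this is standard since $1/t^2$ is continuous on $[\eta, T]$.

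Next, construct $(\boldsymbol{h}, \boldsymbol{x}) \in \mathcal{S}$ by interleaving positive and negative nodes. At each $t_j \le s_{\mu+}$ with $f_\mu(t_j) > 0$, insert a height slightly less than $t_j$ with positive value $x_i = (1-\kappa) f_\mu(t_j)$; at each $t_j \le T$ with $g_\mu(t_j) > 0$, insert the height $t_j$ with negative value $x_i = -(1-\kappa) g_\mu(t_j)$, for a small parameter $\kappa > 0$. Having chosen the $t_j$'s to be continuity points of both $f_\mu$ and $g_\mu$, the minimality of these functions forces $(t_j, f_\mu(t_j))$ and $(t_j, -g_\mu(t_j))$ to lie in $\operatorname{supp}(\mu)$ whenever the ordinate is nonzero; these points serve as the $x_i'$ required by \eqref{nbhdAssumption}. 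Monotonicity of $|x_i|$ within each sign class is inherited from that of $f_\mu, g_\mu$, and the strict inequalities $x_i' > x_i > 0$ and $x_i' < x_i < 0$ come from the factor $(1-\kappa)$.

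The rate of the resulting step function decomposes to match $I(\mu)$: the purely-negative tail nodes ($t_j > s_{\mu+}$) form exactly $\mathcal{I}_\infty$ with weight $\pi^2/8$, while all earlier nodes lie in $\mathcal{I} \setminus \mathcal{I}_\infty$ with weight $\pi^2/2$. Telescoping within each sign class, $\sum_i |x_i - x_{i^-}|/h_i^2$ equals $(1-\kappa)$ times the Riemann--Stieltjes sum above, up to an error that vanishes with the perturbation of the heights, so for $\kappa$ small enough $I(\boldsymbol{h}, \boldsymbol{x}) > A$. The main technical obstacle will be the support-inclusion step: one must argue that at a continuity point $t_j$ of $f_\mu$, the point $(t_j, f_\mu(t_j))$ actually lies in $\operatorname{supp}(\mu)$. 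The idea is that otherwise one could slightly lower $f_\mu$ on a neighborhood of $t_j$ while still containing $\operatorname{supp}(\mu)$ in the resulting graph-closure, contradicting the minimality of $f_\mu$.
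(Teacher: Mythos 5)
The overall strategy---discretize the integrals defining $I(\mu)$ via Riemann--Stieltjes sums and read off a step function---matches the paper's. But the key technical step, which you flag yourself as an obstacle, does not hold as you state it, and the repair you sketch fails.

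You claim that at a continuity point $t_j$ of $f_\mu$ with $f_\mu(t_j)>0$ (and $t_j\le s_{\mu+}$), the point $(t_j, f_\mu(t_j))$ must lie in $\supp(\mu)$, and you suggest proving this by ``slightly lowering $f_\mu$ on a neighborhood of $t_j$.'' Neither part is correct. Take $\mu$ with mass on $[0,1]\times\{0\}$, $(1,2]\times\{1\}$, $(2,3]\times\{-\varepsilon_0\}$ for small $\varepsilon_0>0$, $(3,4]\times\{2\}$, and $(4,\infty)\times\{-1\}$. Then $s_{\mu+}=4$, and $f_\mu\equiv 1$ on $(1,3]$. For $t_j=2.5$, $f_\mu$ is continuous, $f_\mu(t_j)=1>0$, $t_j<s_{\mu+}$, yet $(2.5,1)\notin\supp(\mu)$ (the mass at that abscissa sits at $-\varepsilon_0$). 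Lowering $f_\mu$ near $t_j=2.5$ is impossible without violating monotonicity, since $(2,1)\in\supp(\mu)$ forces $f_\mu(2)=1$; so minimality of $f_\mu$ does not force the inclusion. With this gap, \eqref{nbhdAssumption} cannot be verified for the node $t_j$, and the argument breaks.

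What actually works---and what the paper does---is not to insist that the support condition hold at the chosen partition node, but to \emph{move the node}. If $(h, f_\mu(h))\notin\supp(\mu)$, set $h':=\sup\{\eta\le h:(\eta,f_\mu(\eta))\in\supp(\mu)\}$; then $h'<h$, $f_\mu(h')=f_\mu(h)$, and $(h',f_\mu(h'))\in\supp(\mu)$. One then replaces $h$ by a nearby $h''<h'$ that is still a support point, avoids the other partition, and keeps $f_\mu(h'')$ close to $f_\mu(h)$. Crucially, moving a node leftward only \emph{increases} the lower Stieltjes sum because $t\mapsto t^{-2}$ is decreasing, so the bound $I(\BS{h,x})>A$ is preserved. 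This node-replacement device is the missing ingredient in your argument; without it, the construction does not yield a pair $(\BS{h,x})$ satisfying \eqref{nbhdAssumption}.
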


\begin{pf}
We will abbreviate $f_\mu, g_\mu$ to $f,g$. We also remind
the reader that for a bounded function $F$, a nondecreasing
function $\alpha$, both defined on a finite closed interval\vadjust{\goodbreak}
$[a, b]$, and a partition
$\C{P}=\{a=:t_0<t_1<\cdots<t_n:=b\}$ of $[a, b]$, the lower
Stieltjes sum $L(\C{P}, F, \alpha)$ is defined as
\[
\sum_{i=1}^n \inf\{F(t) \dvtx  t\in[t_{i-1}, t_i]\}\bigl(\alpha(t_i)-\alpha
(t_{i-1})\bigr).
\]
We consider three cases for $\mu$.

\textsc{Case 1}. $0<s_{\mu-}<s_{\mu+}=\infty$.

We can write $A=(A_1+A_2)\pi^2/2+A_3\pi^2/8$ for some
$A_i$'s with
\[
\int_0^{s_{\mu-}}t^{-2} \di f(t)>A_1, \qquad
\int_0^{s_{\mu-}}t^{-2} \di g(t)>A_2, \qquad
\int_{s_{\mu-}}^H t^{-2} \di f(t)>A_3,
\]
where $H\in(s_{\mu-}, \infty)$ is large enough. Since
$f, g$ are left continuous at $s_{\mu-}$, we can find two
finite subsets $\C{P}_1, \C{P}_2$ of $[0,
s_{\mu-})$ so that $\C{P}_1 \cap\C{P}_2=\{0\}$, and when
considered as partitions of the intervals $[0,
\max\C{P}_1], [0, \max\C{P}_2 ]$, the corresponding lower
Stieltjes sums satisfy
%
\begin{equation}\label{firstInt}
L(\C{P}_1, t^{-2}, f)> A_1, \qquad
L(\C{P}_2, t^{-2}, g)> A_2.
\end{equation}
We can also find a finite subset
$\C{P}_3$ of $[s_{\mu-},H]$ containing $s_{\mu-}$, with
%
\begin{equation}\label{thirdInt}
L(\C{P}_3, t^{-2}, f)> A_3.
\end{equation}
We can assume that $f|\C{P}_1\cup\C{P}_3, g|\C{P}_2$ are
strictly increasing. In particular, $f(\zeta_1),
g(\zeta_2)>0$, with $\zeta_i:=\min(\C{P}_i\setminus\{0\})$
for $i=1,2$. We can also assume that
%
\begin{eqnarray} \label{supptAssumption}
 (h, f(h))&\in&\supp(\mu)  \qquad \mbox{for } h\in\C
{P}_1\cup\C{P}_3, \nonumber
\\[-8pt]
\\[-8pt]
(h, -g(h))&\in&\supp(\mu)   \qquad \mbox{for } h\in\C{P}_2.
\nonumber
\end{eqnarray}
If, for example, this is not the case for an $h\in
\C{P}_1$, we go as follows. The point
\[
h':=\sup\{\eta\le h\dvtx  (\eta, f(\eta))\in\supp(\mu)\}
\]
satisfies $h'<h$ by the assumption and the left continuity
of $f$, $f(h')=f(h)$ by the minimality property in the
definition of $f$, and $(h', f(h'))\in\supp(\mu)$. We can
find an $h''<h'$ near $h'$ such that $(h'', f(h''))\in
\supp(\mu)$, $h''\notin\C{P}_2$, and $f(h'')$ is as close
to $f(h)$ as we want, because $f$ is left continuous. Finally, we
replace $h$ with $h''$ in
$\C{P}_1$. The lower Stieltjes sum over the new partition is
larger than before because $t^{-2}$ is decreasing.

Also, for the $\eta_i:=\max\C{P}_i$ for $i=1,2$, we
can arrange that $\eta_1<\eta_2$ because $(s_{\mu-},
-g(s_{\mu-}))\in\supp(\mu)$.

Let $\BS{h}\in\mathbb{R}^N$ be the vector having coordinates the
elements of the set
$(\C{P}_1\cup\C{P}_2\cup\C{P}_3)\setminus\{0\}$ ordered as
$h_1<\cdots<h_N$, and define the vector
$\BS{x}(\eps)\in\R^N$ as
\[
x_i(\eps):=
\cases{\displaystyle
f(h_i)-\eps ,&\quad  if   $h_i\in\C{P}_1\cup\C{P}_3 $,\cr\displaystyle
-g(h_i)+\eps ,&\quad  if   $h_i\in\C{P}_2$,
}
\]
for all $i\in\{1,\ldots, N\}$ and all $\eps\in[0,
f(\zeta_1)\wedge g(\zeta_2))$.\vadjust{\goodbreak}

Using the notation of Section~\ref{stepSection}, we note that
for $\eps$ small enough, as above, all the pairs $(\BS{h},
\BS{x}(\eps))$ give rise to the same index set $\C{I}_\infty$, and
we have $\C{I}_\infty=\{i\dvtx  h_i \in\C{P}_3\}$ because $h_i\le
\eta_1<\eta_2$ for all $i$ with $h_i\in\C{P}_1$, and
$\eta_2=h_{i_0}$ with $x_{i_0}(\eps)<0$. Also
\[
I(\BS{h}, \BS{x}(\eps))=\frac{\pi^2}{2}\sum_{i\in\C{I}\setminus
\C{I}_\infty}\frac{|x_i(\eps)-x_{i^-}(\eps)|}{h_i^2}+\frac{\pi
^2}{8}\sum_{i\in\C{I}_\infty} \frac{|x_i(\eps)-x_{i^-}(\eps)|}{h_i^2}
\]
and
\[
\lim_{\eps\to0}I(\BS{h}, \BS{x}(\eps))=I(\BS{h}, \BS{x}(0))>A.
\]
The last inequality follows from \eqref{firstInt},
\eqref{thirdInt}, the equalities
%
\begin{eqnarray}
\sum_{i\in\C{I}\setminus\C{I}_\infty\dvtx x_i>0}
\frac{|x_i(0)-x_{i^-}(0)|}{h_i^2}&=& L(\C{P}_1, t^{-2},f), \\
\sum_{i\in\C{I}\dvtx  x_i<0}
\frac{|x_i(0)-x_{i^-}(0)|}{h_i^2}&=& L(\C{P}_2, t^{-2}, g),
\end{eqnarray}
in which we use that $f(0)=g(0)=0$, and the inequality
\[
\sum_{i\in\C{I}_\infty}
\frac{|x_i(0)-x_{i^-}(0)|}{h_i^2}\ge L(\C{P}_3, t^{-2}, f).
\]
The last inequality holds because the left-hand side equals exactly the
Stieltjes sum in the right-hand side plus the term corresponding to
$i:=\min\C{I}_\infty$.

Thus, for small $\eps$, the pair $(\BS{h},
\BS{x}(\eps))$ is in $\C{S}$, satisfies assumption
\eqref{nbhdAssumption} because of \eqref{supptAssumption}, and it has
$I(\BS{h},
\BS{x}(\eps))>A$.

\textsc{Case 2}. $s_{\mu-}=s_{\mu+}=\infty$.

There is $H>0$ finite with
\[
(\pi^2/2)\int_0^H t^{-2}
\di(f+g)(t)>A.
\]
Let $A_1, A_2$ be such that
$A=(\pi^2/2)(A_1+A_2)$ and
\[
\int_0^H t^{-2} \di f(t)>A_1,
\qquad\int_0^H t^{-2} \di g(t)>A_2.
\]

We find two finite subsets $\C{P}_1, \C{P}_2$ of $[0,
H]$,
so that $\C{P}_1 \cap\C{P}_2=\{0\}$, and when considered as
partitions of the intervals $[0, \max\C{P}_1], [0, \max\C{P}_2]$,
the corresponding lower Stieltjes sums satisfy %
%
\begin{eqnarray}\label{lowerSum21} L(\C{P}_1, t^{-2}, f)&>&A_1,
\\ \label
{lowerSum22}
L(\C{P}_2, t^{-2}, g)&>& A_2.
\end{eqnarray}
Again, we can assume that $f|\C{P}_1, g|\C{P}_2$ are
strictly increasing, $(h, f(h))\in\supp(\mu)$ for $h\in
\C{P}_1$, $(h, -g(h))\in\supp(\mu)$ for $h\in\C{P}_2$,
and $\eta_1<\eta_2$, where $\eta_i:=\max\C{P}_i$ for
$i=1,2$, as before.\vadjust{\goodbreak}

Pick a number $\eta_1'>\eta_2$ with $(\eta_1',
f(\eta_1'))\in\supp(\mu)$ (recall that $s_{\mu+}=\infty$),
and let $\C{P}_3:=\{\eta_2, \eta_1'\}$.

Let $\BS{h}\in\mathbb{R}^N$ be the vector having coordinates the
elements of the set
$(\C{P}_1\cup\C{P}_2\cup\C{P}_3)\setminus\{0\}$ ordered as
$h_1<\cdots<h_N=\eta_1'$, and the proof continues as in the
first case. Here we just note that in the resulting pairs
$(\BS{h}, \BS{x}(\eps))$, only one element belongs to the
final index set $\C{I}_\infty$, which is due to $\eta_1'$.
The presence of $\eta'_1$ is needed so that in the formula
for $I(\BS{h},\BS{x})$, all increments
$(x_i-x_{i^-})/h_i^2$ with $i\le{N-1}$ get coefficient
$\pi^2/2$, and this is enough to make $I(\BS{h}, \BS{x})$
larger than $A$ because of \eqref{lowerSum21},
\eqref{lowerSum22}.

\textsc{Case 3}. $0=s_{\mu-}<s_{\mu+}=\infty$.

In this case, we work only with the function $f$ and
one partition. The proof is similar to the previous case
and easier.

Since the roles of $f,g$ are symmetric, these are the
only truly different cases.
\end{pf}

Lemma~\ref{rateFunctionApprox} is an immediate consequence
of Lemmas~\ref{vidor} and~\ref{darthvader} below. The next
lemma essentially shows that the pairs $(\mu_{\mathbf{h}, \mathbf
{x}},I(\mathbf{h}, \mathbf{x}))$ are relatively dense in
$\{(\mu,I(\mu))\dvtx \mu\in\mathcal M\}$.

For $L>0$, let $\mathcal{T}_L$ be the topology of weak convergence on
compact subsets of $[0, L]\times\mathbb{R}$ for elements of $\mathcal
{M}$. Note that the topology of $
\mathcal{M}$ is the union of the family
$\{\mathcal{T}_L\dvtx L>0\}
$, which is increasing.

\begin{lemma}\label{vidor}   For every open $G\subset\mathcal M$,
$\mu\in
G$ with $I(\mu)<\infty$, and \mbox{$\delta>0$}, there exists $(\mathbf{h},
\mathbf{x})\in\C{S}$ and $G_{\mathbf{h}, \mathbf{x}}\in\mathcal
{T}_{2h_N}$ so that $\mu
_{\mathbf{h}, \mathbf{x}}\in G_{\mathbf{h}, \mathbf{x}}\subset G$,
and $|I(\mu_{\mathbf{h}, \mathbf{x}})-\allowbreak I(\mu)|<\delta$.
\end{lemma}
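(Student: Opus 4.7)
The plan is to approximate $\mu$ by a step-function measure $\mu_{\BS{h, x}}$ that is weakly close to $\mu$ on compact sets of $[0, 2h_N] \times \R$ and whose rate $I(\BS{h, x})$ is close to $I(\mu)$. By the built-in symmetry of the definition of $I$, assume without loss of generality $s_{\mu-} = \infty$. Since $G$ is open in the colimit topology $\bigcup_L \mathcal{T}_L$, there exist $L_0 > 0$ and a $\mathcal{T}_{L_0}$-open $U$ with $\mu \in U \subset G$; by definition of weak convergence, $U$ may be taken as a finite intersection of conditions $\{\nu : |\int \varphi_k\,d\nu - \int \varphi_k\,d\mu| < \eta_k\}$, where each $\varphi_k$ is a bounded continuous function compactly supported in $[0, L_0] \times \R$.

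Choose $L \geq \max(L_0, s_{\mu+} + 1)$ large enough that the tails $\int_L^\infty t^{-2}\,df_\mu$ and $\int_L^\infty t^{-2}\,dg_\mu$ are both smaller than $\delta/(4\pi^2)$; this uses $I(\mu) < \infty$. Then select a partition $0 = s_0 < s_1 < \cdots < s_M = L$ containing $s_{\mu+}$ as one of its points (when $0 < s_{\mu+} < \infty$), with mesh so small that (i) the corresponding lower Riemann--Stieltjes sums approximate $\int_0^L t^{-2}\,df_\mu$ and $\int_0^L t^{-2}\,dg_\mu$ to within $\delta/(4\pi^2)$; and (ii) the oscillations of $f_\mu$ and $g_\mu$ on each subinterval are small enough that, via uniform continuity of each $\varphi_k$ on its compact support, the weak-closeness inequalities will hold for any measure whose $f, g$-profiles track $f_\mu, g_\mu$ within the partition.

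Now construct $(\BS{h}, \BS{x})$ by splitting each $s_j$ at which $f_\mu$ or $g_\mu$ grows into at most two nearby points $s_j^- < s_j^+$: place a negative value $x = -g_\mu(s_j)$ at $s_j^-$ whenever $g_\mu$ grows at $s_j$, and a positive value $x = f_\mu(s_j)$ at $s_j^+$ whenever $f_\mu$ grows at $s_j$ and $s_j \leq s_{\mu+}$; list all resulting heights in increasing order. The monotonicity of $f_\mu$ and $g_\mu$ automatically yields the $|x_i|$-monotonicity within same-sign groups required by $\C{S}$. To ensure $\C{I}_\infty$ consists exactly of the indices $i$ with $h_i > s_{\mu+}$ (where only negative $x_i$ appear), insert if needed a positive ``witness'' point just below $s_{\mu+}$ with $x$ equal to the last positive value of $f_\mu$ on $[0, s_{\mu+}]$. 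Then by \eqref{stepRate}, $I(\BS{h, x})$ decomposes as $\frac{\pi^2}{2}$ times a lower Stieltjes sum over $[0, s_{\mu+}]$ for $d(f_\mu + g_\mu)$, plus $\frac{\pi^2}{8}$ times a lower Stieltjes sum over $(s_{\mu+}, L]$ for $dg_\mu$; combined with the tail control, this gives $|I(\BS{h, x}) - I(\mu)| < \delta$.

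To check $\mu_{\BS{h, x}} \in U$: for each $\varphi_k$, split $\int \varphi_k\,d\mu_{\BS{h, x}}$ and $\int \varphi_k\,d\mu$ into contributions from the upper and lower half-planes, and use uniform continuity of $\varphi_k$ on its compact support together with (ii) to conclude that the two differ by less than $\eta_k$. Finally, pad the partition with extra points of negligible rate contribution, if necessary, so that $2h_N \geq L_0$; then $G_{\BS{h, x}} := U$ lies in $\mathcal{T}_{L_0} \subset \mathcal{T}_{2h_N}$. The principal obstacle is ensuring that $\C{I}_\infty$ matches the one-sided support region of $\mu$ so that the $\pi^2/2$ and $\pi^2/8$ coefficients in \eqref{stepRate} correctly align with the split of $I(\mu)$ at $s_{\mu+}$; the positive ``witness'' device handles this. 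Edge cases ($s_{\mu+} = 0$ so $\mu$ is one-sided; $s_{\mu+} = \infty$ so there is no one-sided region; $I(\mu) = 0$ so $\mu$ sits on the $x$-axis) require only minor variants of the same construction, e.g.\ a single terminal point $(h_N, x_N)$ with very large $h_N$ absorbs the unavoidable $\C{I}_\infty$ contribution when $s_{\mu+} = \infty$.
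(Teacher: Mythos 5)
Your argument has a genuine gap at the step where you conclude $\mu_{\BS{h,x}}\in U$. Condition (ii) assumes that weak closeness follows once the step measure's profiles track $f_\mu,g_\mu$ within the partition, but an element of $\mathcal M$ is \emph{not} determined by the pair $(f_\mu,g_\mu)$: over a given range of times, $\mu$ may split its Lebesgue time-mass between the upper graph of $f_\mu$ and the lower graph of $-g_\mu$ in arbitrary measurable proportions. Your construction places the path, on each partition subinterval, at the most recently grown value (apart from the tiny slivers $(s_j^-,s_j^+]$), so the occupation measure $\mu_{\BS{h,x}}$ sits essentially entirely on one of the two graphs there. Take, for instance, a $\mu$ which on the time interval $[1,2]$ puts half of Lebesgue measure at height $5$ and half at height $-3$, with $f_\mu\equiv 5$ and $g_\mu\equiv 3$ constant there: neither function grows in $(1,2)$, so no heights of yours fall in that interval, your step function is constant on a single side throughout $[1,2]$, and a test function $\varphi_k$ concentrated near $[1,2]\times\{-3\}$ shows $\mu_{\BS{h,x}}\notin U$ no matter how fine the partition. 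So the weak-approximation claim fails in general, and $G_{\BS{h,x}}\subset G$ containing $\mu_{\BS{h,x}}$ cannot be produced this way.

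This is exactly where the paper's proof does additional work. Besides approximating $f_\mu,g_\mu$ on $[0,L]$ by step functions $f^{(n)},g^{(n)}$ (which, as in your argument, controls the rate; note that revisiting the same value on the same side contributes nothing to \eqref{stepRate}, so frequent alternation is free), it approximates the projection $\mu(\,\cdot\,\times\R^+)$ by measures whose densities are $\{0,1\}$-valued step functions $q_n$, and defines the approximating path $u_n$ to equal $f^{(n)}$ where $q_n=1$ and $-g^{(n)}$ where $q_n=0$. The resulting rapid switching reproduces the local proportions in which $\mu$ distributes mass between the two graphs, and this is what yields $m(u_n)\to\mu$ in the relevant topology; further care (the interval where $q_n=0$ just below $s_{\mu-}$, and the treatment of a possible jump of the relevant profile at the split point) ensures the split between the $\pi^2/2$ and $\pi^2/8$ regimes converges correctly. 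Your tail estimate, your lower-Stieltjes-sum control of the rate, and your ``witness point'' device for aligning $\C{I}_\infty$ are all reasonable, but without an analogue of $q_n$ the weak approximation step cannot be repaired within your construction.
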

\begin{pf}
Assume that $0<s_{\mu-}<\infty$. For $0<b<a$ and $f, g$
increasing and left continuous on $[0, a]$, we will use the
notation
\[
I(f, g, a, b)=\frac{\pi^2}{2}\int_0^b \frac{1}{t^2}\di
(f+g)(t)+\frac{\pi^2}{8}\int_b^a \frac{1}{t^2}\di f(t).
\]

By the definition of the topology of $\C{M}$, there is an
$L>s_{\mu-}$ and $U\in\mathcal{T}_L$ neighborhood of $\mu$ such
that $U\subset G$. We can also assume that
$I(f_\mu, g_\mu, L,\break s_{\mu-})>I(\mu)-\gd$.

We can approximate in the Skorokhod topology the
restrictions of $f_\mu, g_\mu$ on $[0, L]$ by monotone left
continuous step functions $f^{(n)}, g^{(n)}$ with finitely
many steps so that $f^{(n)}, g^{(n)}$ are constant on $[L,
\infty)$ and $[s_{\mu-}-1/n, \infty)$ respectively [we use
the left continuity of $g_\mu$ at $s_{\mu-}$ to satisfy that together
with \eqref{I conv}],
they do not have common jump times, and
%
\begin{equation}\label{I conv}
I\bigl(f^{(n)}, g^{(n)}, L, s_{\mu-}\bigr) \to I(f_\mu,g_\mu, L,
s_{\mu-}).
\end{equation}
We can also assume that $f^{(n)}$ has a jump in $(L/2, L)$.

Then we approximate the measure $\mu( \cdot \times
\mathbb R^+)$ on $[0, L]$ by a sequence of measures on $[0,
\infty)$ whose densities are right continuous step functions
$q_n$ with values $0,1$, finitely many steps, $q_n=1$ on
$[s_{\mu-}, \infty)$, and
$q_n=0$ on an interval inside $(s_{\mu-}-2/n, s_{\mu-}-1/n)$.
By introducing extra jumps in $q_n$, we can further
ensure that
%
\begin{equation}\label{supportPoints}
q_n(h)=
\cases{\displaystyle
1 ,&\quad   if $h$ is a jump time of   $f^{(n)} $,\cr\displaystyle
0 ,&\quad   if $h$ is a jump time of   $g^{(n)}$.
}
\end{equation}
If $f_\mu$ has a jump at $s_{\mu-}$, then we require in
addition that
%
\begin{eqnarray} \label{jumpPoint}
f^{(n)} \mbox{ is $1/n$-close to $f_\mu$ at time
$s_{\mu-}-1/n$, }
\mbox{and $q_n=1$ on $[s_{\mu-}-1/n, \infty)$.}\hspace*{-35pt}
\end{eqnarray}
Define the step function
\[
u_n(h)=
\cases{\displaystyle
f^{(n)}(h) ,&\quad  if   $q_n(h)=1 $,\cr\displaystyle
-g^{(n)}(h) ,&\quad  if   $q_n(h) =0$
}
\]
at all points $h\in[0, \infty)$ where $q_n$ does not jump,
and extend it to the remaining finite set of points so that
it is left continuous. Clearly this is a function of the
form $\Phi_{\mathbf{h}, \mathbf{x}}$ with $(\mathbf{h}, \mathbf
{x})\in\C{S}$.

Let $\nu_n=m(u_n)$, the graph occupation measure of $u_n$.
By our construction, $f_{\nu_n}=f^{(n)}$,
$g_{\nu_n}=g^{(n)}$, because of \eqref{supportPoints} and
the right continuity of $q_n$, $\nu_n|[0,L]\times\D{R}\to
\mu|[0,L]\times\D{R}$, $s_{\nu_n+}=\infty$, and
$s_{\nu_n-}\to s_{\mu-}$ because
$s_{\mu-}-2/n<s_{\nu_n-}<s_{\mu-}$.

Furthermore, by \eqref{I conv}, the fact that
$s_{\nu_n-}\to s_{\mu-}$, and that any possible jump of
$f_\mu$ at $s_{\mu-}$ is treated appropriately through
\eqref{jumpPoint}, we have
\[
I(\nu_n)=I\bigl(f^{(n)}, g^{(n)}, L, s_{\nu_n-}\bigr) \to I (f_\mu,
g_\mu, L,s_{\mu-} )\in\bigl(I(\mu)-\gd, I(\mu)\bigr].
\]
Take now $n$ sufficiently large so that $\nu_n\in U, I(\nu_n)>I(\mu
)-\gd$, and let $(\mathbf{h}, \mathbf{x})\in\C{S}$ be such that
$\nu_n=\mu
_{\mathbf{h}, \mathbf{x}}$. Finally, let $G_{\mathbf{h}, \mathbf
{x}}:=U$. Since $f^{(n)}$ has a
jump in $(L/2, L)$, it holds $2h_N>L$, and thus $U\in\mathcal
{T}_L\subset\mathcal{T}_{2 h_N}$.

The remaining truly different cases are $s_{\mu-}=0$,
$s_{\mu-}=s_{\mu+}=\infty$; the proof in these cases is
similar and easier.
\end{pf}

\begin{lemma} \label{darthvader} If $(\mathbf{h}, \mathbf{x})\in
\mathcal S$, and
$\mu_{\mathbf{h}, \mathbf{x}}\in G_{\mathbf{h}, \mathbf{x}}\in
\mathcal{T}_{2h_N}$, then for
all sufficiently small $\eps$, we have
\[
\{m(x_B)\dvtx B\in\mathcal R(\mathbf{h}, \mathbf{x}, \eps, \eps)\}
\subset G_{\mathbf{h}, \mathbf{x}}.
\]
\end{lemma}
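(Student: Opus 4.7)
The plan is to deduce the inclusion directly from the pointwise control on $x_f$ for $f \in \mathcal R(\BS{h,x},\eps,\eps)$ already obtained in Section \ref{specialSets}, namely \eqref{hvariation} and \eqref{xvariation}. Since $G_{\BS{h,x}}$ is open in $\mathcal{T}_{2h_N}$ and contains $\mu_{\BS{h,x}}$, it contains a basic neighborhood
\[
\Big\{\nu \in \C{M} : \Big|\textstyle\int \varphi_j \, d\nu - \int \varphi_j\, d\mu_{\BS{h,x}}\Big| < \gamma,\ j=1,\dots,k\Big\},
\]
for some $\gamma > 0$ and bounded continuous functions $\varphi_1,\dots,\varphi_k$ supported in compact subsets of $[0, 2h_N]\times\R$. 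Thus it suffices to show that for each such $\varphi$,
\[
\sup_{f \in \mathcal R(\BS{h,x},\eps,\eps)} \Big|\int_0^{2h_N}\varphi(s, x_f(s))\,ds - \int_0^{2h_N}\varphi(s, \Phi_{\BS{h,x}}(s))\,ds\Big| \longrightarrow 0 \quad\text{as } \eps \to 0.
\]

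The estimates \eqref{hvariation} and \eqref{xvariation} do all the work. They show that the jump times $v_1 < \dots < v_N$ of $x_f$ satisfy $|v_i - h_i| = O_{\BS{h,x}}(\eps)$, and that on each interval $(v_i, v_{i+1}]$ the value of $x_f$ is within $O_{\BS{h,x}}(\eps)$ of the constant value $x_i$ that $\Phi_{\BS{h,x}}$ takes on the matching interval $(h_i, h_{i+1}]$; analogously, $|x_f| = O_{\BS{h,x}}(\eps)$ on $[0, v_1]$ while $\Phi_{\BS{h,x}} = 0$ on $[0, h_1]$. I would split $\int_0^{2h_N}[\varphi(s, x_f(s)) - \varphi(s, \Phi_{\BS{h,x}}(s))]\,ds$ into two contributions: one coming from the symmetric difference between the partitions $\{v_i\}$ and $\{h_i\}$, which has total length $O(\eps)$ and on which the integrand is bounded by $2\|\varphi\|_\infty$; and one coming from the matching subintervals, on which uniform continuity of $\varphi$ bounds the integrand by $\omega_\varphi(O_{\BS{h,x}}(\eps))$. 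Both pieces tend to $0$ uniformly in $f$ as $\eps\to 0$, yielding the claim.

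There is no genuine obstacle; the argument is routine analysis once \eqref{hvariation} and \eqref{xvariation} are in hand. In spirit, the lemma upgrades the $[0,2h_N]$-Skorokhod closeness of $x_f$ to $\Phi_{\BS{h,x}}$ noted in the discussion following \eqref{xvariation} to the weaker $\mathcal{T}_{2h_N}$ closeness of their graph occupation measures — exactly the direction in which small perturbations of jump times are harmless, since integrating against compactly supported continuous test functions smooths over them.
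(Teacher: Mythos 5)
Your proposal is correct and follows essentially the same route as the paper: both arguments rest entirely on \eqref{hvariation} and \eqref{xvariation}, which say that the jump times and values of $x_f$ for $f\in\mathcal R(\BS{h,x},\eps,\eps)$ are within $O_{\BS{h,x}}(\eps)$ of those of $\Phi_{\BS{h,x}}$ on $[0,2h_N]$. The only difference is one of packaging: the paper records this as membership in a small $[0,2h_N]$-Skorokhod ball around $\Phi_{\BS{h,x}}$ and then invokes continuity of the graph occupation map $m$ from the Skorokhod topology into $\mathcal{T}_{2h_N}$ (after extending $G_{\BS{h,x}}$ to an open set $G'$ of measures on $[0,\infty)\times\R$ with $G'\cap\C{M}=G_{\BS{h,x}}$), whereas you verify the resulting weak closeness of the occupation measures directly and uniformly via compactly supported test functions, which amounts to proving that continuity statement by hand in this particular case.
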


\begin{pf}
The paths $B$ contained in $\mathcal R(\mathbf{h}, \mathbf{x}, \eps,
\eps)$ have the property that $x_B$ is a step function
whose jump times and values are close to that of
$\Phi_{\mathbf{h}, \mathbf{x}}$ in the interval $[0, 2 h_N]$; see
\eqref
{hvariation} and
\eqref{xvariation}. In particular, for any $\delta>0$,
there exists $\eps_0>0$ so that for $\eps\in(0, \eps_0)$,
$\{x_B\dvtx B\in\mathcal R(\mathbf{h}, \mathbf{x}, \eps, \eps)\}$ is
contained in the $[0, 2 h_N]$-Skorokhod ball of radius $\delta$ about
$\Phi_{\mathbf{h}, \mathbf{x}}$.

On the space of real left continuous functions on $[0, \infty)$ having
right limits, consider for $L>0$ the topology of Skorokhod convergence
in $[0, L]$. Also let $\C{T}_L'$ the topology of weak convergence on
the compact subsets of $[0, L]\times\D{R}$ for measures on $[0,
\infty)\times\D{R}$. The graph occupation measure is a continuous
functions between the two spaces with the above topologies. Let $G'\in
\C{T}_{2h_N}'$ so that $G'\cap\C{M}=G_{\mathbf{h}, \mathbf{x}}$.
By the
continuity of $m$ just mentioned, it follows that $m^{-1}(G')$ contains
some $[0, 2 h_N]$-Skorokhod ball
around $\Phi_{\mathbf{h}, \mathbf{x}}$, and therefore also the set
$\{x_B\dvtx B\in\mathcal R(\mathbf{h}, \mathbf{x}, \eps, \eps)\}$ for all
$\eps>0$ small enough. Since the image of $\{x_B\dvtx B\in
\mathcal R(\mathbf{h}, \mathbf{x}, \eps, \eps)\}$ under $m$ is also in
$\mathcal M$, the claim follows.~%
\end{pf}

The following lemma is needed in Proposition~\ref{geomSequences2},
toward the proof of the functional law of
iterated logarithm for the environment. In order to show that all rate-1
measures are limit points, we need to show that they can be
approximated by lower-rate ones. This is implied by the
following.

\begin{lemma}\label{l:nomin} The
minimum of the rate function $I$ on an open set is either
zero, infinity, or is not achieved.
\end{lemma}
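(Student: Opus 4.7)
The plan is to proceed by contradiction: assume $\mu \in G$ achieves $c := I(\mu) \in (0, \infty)$ on the open set $G$. I will construct a nearby $\mu' \in G$ with $I(\mu') < c$, contradicting minimality. The driving observation is that the weight $1/t^2$ in $I$ is strictly decreasing, so shifting any piece of the mass of $df_\mu$ or $dg_\mu$ slightly to the right strictly decreases $I$, while one can make the shift arbitrarily small in the local weak topology.

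Assume without loss of generality that $s_{\mu-} = \infty$. Since $I(\mu) > 0$, at least one of $df_\mu$ or $dg_\mu$ has positive mass in its integration region in \eqref{rate}; say $df_\mu$ does. Pick $t_0 > 0$ with $\Delta := f_\mu(t_0) - f_\mu(t_0 - \eta) > 0$ for all small $\eta > 0$, and with $t_0 + \eta < s_{\mu+}$ so that the shift stays in the $\pi^2/2$-region. Define a left-continuous nondecreasing $f'$ that moves the $df_\mu$-mass $\Delta$ from $(t_0 - \eta, t_0]$ to a point mass at $t_0 + \eta$: set $f' = f_\mu$ on $[0, t_0 - \eta] \cup (t_0 + \eta, \infty)$, $f' \equiv f_\mu(t_0 - \eta)$ on $(t_0 - \eta, t_0]$, and $f'(t) = f_\mu(t) - \Delta$ on $(t_0, t_0 + \eta]$, with a compensating jump of size $\Delta$ at $t_0 + \eta$. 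Let $A_+$ be the set of times at which $\mu$ uses its upper graph, and define $\mu'$ by $\mu'(A) := \int_0^\infty \one_A(t, F'(t))\,dt$, where $F'(t) = f'(t)$ for $t \in A_+$ and $F'(t) = -g_\mu(t)$ otherwise.

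Then $\mu' \in \C{M}$ with $g_{\mu'} = g_\mu$ and $f_{\mu'} = f'$ (the last identity because the closure of the upper support of $\mu'$ accumulates near $(t_0 \pm \eta, f'(t_0 \pm \eta))$, forcing $f'$ to be the minimal upper envelope). Since $\mu$ and $\mu'$ agree outside $[t_0 - \eta, t_0 + \eta] \times \R$, we have $\mu' \to \mu$ in the local weak topology as $\eta \to 0$, so $\mu' \in G$ for small enough $\eta$. A direct computation, using the bound $\int_{(t_0 - \eta, t_0]} t^{-2}\,df_\mu \geq \Delta / t_0^2$, gives
\begin{equation*}
I(\mu) - I(\mu') = \frac{\pi^2}{2}\left(\int_{(t_0 - \eta, t_0]} \frac{df_\mu(t)}{t^2} - \frac{\Delta}{(t_0 + \eta)^2}\right) \geq \frac{\pi^2}{2}\,\Delta\left(\frac{1}{t_0^2} - \frac{1}{(t_0 + \eta)^2}\right) > 0,
\end{equation*}
the desired strict decrease.

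The main obstacle is the bookkeeping needed to verify $f_{\mu'} = f'$ exactly rather than a strictly smaller extremal function; this needs $A_+$ to accumulate appropriately near the support of $df_\mu$, which is automatic because any point $(t,f_\mu(t))$ of strict increase of $f_\mu$ lies in the closure of the upper support of $\mu$. Borderline cases (e.g., $t_0$ forced to be near $s_{\mu+}$, or the mass of $df_\mu$ sitting only at $s_{\mu+}$) are handled the same way; pushing mass across $s_{\mu+}$ only helps, since the coefficient drops from $\pi^2/2$ to $\pi^2/8$. The symmetric scenarios ($s_{\mu+} = \infty$, or $dg_\mu$ rather than $df_\mu$ carrying the mass) proceed identically.
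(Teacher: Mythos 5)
Your approach — a local mass-shift in the $t$-direction exploiting the strict decrease of the weight $1/t^2$ — does work, but it is genuinely different from, and considerably heavier than, the paper's argument. The paper simply scales the $y$-coordinate: set $\mu_\eps(I\times J)=\mu(I\times(1-\eps)^{-1}J)$, so that $f_{\mu_\eps}=(1-\eps)f_\mu$, $g_{\mu_\eps}=(1-\eps)g_\mu$, while $s_{\mu_\eps\pm}=s_{\mu\pm}$; hence $I(\mu_\eps)=(1-\eps)I(\mu)<I(\mu)$ and $\mu_\eps\to\mu$ locally weakly. This one-line scaling exploits the positive homogeneity of $I$ in $(f,g)$ and makes all of your case analysis, bookkeeping of $f'$, and verification that $f_{\mu'}=f'$ unnecessary. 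Your mass-shift is a valid alternative, but two points deserve attention. First, your handling of the borderline case is wrong as stated: pushing a chunk of $df_\mu$ to the right of $s_{\mu+}$ increases $s_{\mu+}$, which moves a piece of the $dg_\mu$ integral from the $\pi^2/8$ region into the $\pi^2/2$ region and therefore \emph{increases} the $g$-contribution by $\frac{3\pi^2}{8}\int_{s_{\mu+}}^{s_{\mu+}+\eta}t^{-2}\,dg_\mu$, which need not be dominated by the gain on the $f$ side. The coefficient-drop argument you invoke applies only to shifting $dg_\mu$ mass (which does not move $s_{\mu+}$), not to $df_\mu$ mass. Fortunately the case is vacuous: when $s_{\mu+}<\infty$, minimality of the left-continuous envelope $f_\mu$ forces $f_\mu$ to be constant on $(s_{\mu+},\infty)$ and continuous at $s_{\mu+}$, so $df_\mu$ has no atom at $s_{\mu+}$ and you can always take $t_0<s_{\mu+}$ with $t_0+\eta<s_{\mu+}$. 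Second, you implicitly use that any $\mu\in\C{M}$ can be written as $m(F)$ for a measurable $F$ selecting between $f_\mu$ and $-g_\mu$; this is true but worth a sentence (since $\mu$ projects to Lebesgue measure, it charges no vertical line, so it lives a.e.\ on $\operatorname{graph}(f_\mu)\cup\operatorname{graph}(-g_\mu)$ rather than on the closure's extra vertical segments).
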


\begin{pf}
Let $\mu\in G$ with $G$ open and $I(\mu)$ positive and finite.
Recall that $I(\mu)$ is defined in \eqref{rate} in terms of
$f,g$ whose graph $\mu$ is supported on. Let
$\mu_\eps(I\times J)=\mu(I \times(1-\eps)^{-1} J)$, that is, a
scaled version of $\mu$ that is supported on the graph of
$(1-\eps)f$ and $(1-\eps)g$. Then
$I(\mu_\eps)=(1-\eps)I(\mu)$. Also, $\mu_\eps\to\mu$
locally weakly as $\eps\to0$, so for small enough $\eps$
we have $\mu_\eps\in G$.
\end{pf}

\section*{Acknowledgment}
The authors thank Zhan
Shi for bringing this problem to their attention.


%

\printaddresses

\end{document}